\address[nathan.geer@usu.edu]{Nathan Geer,Mathematics \& Statistics, Utah State
University, 84322, Logan, Utah, USA}
\address[bertrand.patureau@univ-ubs.fr]{Bertrand Patureau-Mirand, LMBA
  UMR 6205, Universit\'e Europ\'eenne de Bretagne - Universit\'e de
  Bretagne-Sud, BP 573 F-56017, Vannes, France }
\address[virelizi@math.univ-montp2.fr]{Alexis Virelizier,
  D\'epartement de math\'ematiques, Universit\'e Montpellier 2, 34095
  Cedex 5, Montpellier, France} 
\newtheorem{definition}{Definition} 
\newtheorem{theorem}[definition]{Theorem}
\newtheorem{proposition}[definition]{Proposition}
\newtheorem{lemma}[definition]{Lemma}
\newtheorem{corollary}[definition]{Corollary}
\theoremstyle{definition}
\newtheorem{rem}[definition]{Remark}
\newcommand{\kt}{$\Bbbk$\nobreakdash-\hspace{0pt}}
\newcommand{\kk}{\Bbbk}
\newcommand{\Ob}{\mathrm{Ob}}
\newcommand{\oo}{\mathcal{O}}
\newcommand{\R}{\mathbb{R}}
\newcommand{\opp}{\mathrm{op}}
\newcommand{\rev}{\mathrm{rev}}
\newcommand{\cat}{\mathcal{C}}
\newcommand{\Graph}{\mathcal{G}}
\newcommand{\ideal}{\mathcal{I}}
\newcommand{\ev}{\mathrm{ev}}
\newcommand{\coev}{\mathrm{coev}}
\newcommand{\tev}{\widetilde{\mathrm{ev}}}
\newcommand{\tcoev}{\widetilde{\mathrm{coev}}}
\newcommand{\unit}{\mathbb{1}}
\newcommand{\qd}{\ensuremath{\mathsf{d}}}
\newcommand{\brk}[1]{\left\langle{#1}\right\rangle}
\newcommand{\A}{{\mathsf{A}}}
\newcommand{\B}{{\mathsf{B}}}
\newcommand{\tzz}{{\mathsf{t}}}
\newcommand{\tet}{\mathcal{T}}
\newcommand{\ang}[1]{{\left\langle{#1}\right\rangle}}
\newcommand{\co}{\colon}
\newcommand{\ep}{\varepsilon}
\newcommand{\Proj}{\operatorname{\mathsf{Proj}}}
\newcommand{\cl}{\operatorname{\mathrm{cl}}}
\DeclareMathOperator{\End}{{End}}
\DeclareMathOperator{\Hom}{Hom}
\DeclareMathOperator{\Id}{id}
\DeclareMathOperator{\id}{id}
\DeclareMathOperator{\tr}{tr}
\newcommand{\rsdraw}[3]{
  \raisebox{-#1\height}{\scalebox{#2}{\includegraphics{#3.eps}}}}
\title{Traces on ideals in pivotal categories}
\author[N Geer, B Patureau-Mirand, A Virelizier]{Nathan Geer, Bertrand
  Patureau-Mirand, Alexis Virelizier\thanks{The work of Nathan Geer
    has been partially supported by the NSF grants DMS-0706725 and
    DMS-0968279.  Alexis Virelizier was partially supported by the ANR
    grant GESAQ}}
\begin{document}

\maketitle

\begin{abstract}
  We construct invariants of $\cat$-colored spherical graphs from
  traces on ideals in a pivotal category $\cat$. Then we provide a
  systematic approach to defining such traces from ambidextrous and
  spherical traces on a class of objects of $\cat$. This extends the
  notion of an ambidextrous object of a braided category given by the
  first two authors in a previous work.
\end{abstract}


\section*{Introduction}
Many constructions of quantum topology rely on tensor categories with duality
and more precisely on categorical traces of such categories.  When these
categories are not semi-simple, the categorical traces are often degenerate and these
constructions become trivial.  In \cite{GKP1}, \cite{GPT2} it has been shown
that in non-semi-simple ribbon categories, non-trivial traces can exist.
The study of these traces leads to new
interesting quantum invariants of links and 3-manifolds.  The goal of this
paper is to generalize this study within the context of pivotal categories.
In particular, we develop a theory of modified traces on ideals in pivotal categories
and show that such traces lead to topological invariants of planar graphs.

Let $\cat$ be a pivotal \kt category.   We call a class $\ideal$ of objects of $\cat$ a \emph{left ideal} (resp.\@ a \emph{right ideal}) if it is closed
under retraction and  left (resp.\@ right) tensor multiplication by objects of $\cat$.  An \emph{ideal} is a two-side ideal.    A \emph{left trace} (resp.\@ a \emph{right trace}) on a left (resp.\@ right)  ideal $\ideal$
is a family of \kt linear functions
$$\tzz = \{\tzz_{V}\colon \End_{\cat}(V) \to \kk \}_{V \in \ideal}$$
 which is suitably compatible with the tensor product and composition of
morphisms (cf.\ Section~\ref{SS-sided-traces}).  A \emph{trace} on an ideal $\ideal$ is a family $\tzz = \{\tzz_{V}\}_{V \in \ideal}$ which is both a left and right trace on $\ideal$.

Let us explain how left traces and traces lead to topological invariants of certain graphs.  Similar invariants exist for right traces.
We consider planar and spherical graphs (i.e., graphs in $\R^2$ and $S^2=\R^2\cup\{\infty\}$, respectively).   If $\oo$ is a class of objects of $\cat$, then a \emph{$\oo$-colored graph} in $\R^2$ (resp.\@ $S^2$) is a ribbon graph embedded in $\R^2$ (resp.\@ $S^2$) whose edges are colored by elements of $\oo$ and coupons are colored by morphisms in $\cat$.   Let $\Gamma$ be a planar $\cat$-colored graph.    By a \emph{left} \emph{cutting presentation} of  $\Gamma$, we mean a $\cat$-colored 1-1-ribbon graph $T$ in $\R\times [0,1]$ such that $\Gamma$ is the left  closure of $T$.    Let $\tzz$ be a left trace on a left ideal $\ideal$ of $\cat$.   We say $\Gamma$ is  \emph{$\ideal$-admissible} if it admits a left cutting presentation $T$ whose open component is colored by an object $V$ of $\ideal$.   For such a  left cutting presentation we set $$F_\tzz(\Gamma)=\tzz_{V}(f_T)$$
where $f_T\colon V\to V$ is the morphism defined by~$T$ via Penrose calculus
(see Subsection \ref{sec-penrose}).
In Section \ref{S:InvClosedGraph} we prove the following statements:
 \begin{enumerate}
\renewcommand{\labelenumi}{{\rm (\roman{enumi})}}
\item Let $\Gamma$ be an $\ideal$-admissible planar graph. 
Then the scalar $F_\tzz(\Gamma)$ is well defined (that is, independent of the choice of the left cutting presentation $T$ of $\Gamma$).  The assignment $\Gamma \mapsto  F_\tzz(\Gamma)$ is then an isotopy invariant of $\ideal$-admissible planar graphs $\Gamma$.
\item \label{I:InvSphwithA} 
The left trace $\tzz$ on the left ideal $\ideal$ determines a class $\A\subset \ideal$ (see Equation \eqref{E:DefA}) such that for any $\A$-colored spherical graph~$\Gamma$
the scalar  $F_\tzz(\Gamma)$ is well defined.  The assignment $\Gamma \mapsto  F_\tzz(\Gamma)$ is then an isotopy invariant of $\A$-colored spherical graphs $\Gamma$.
\item If $\ideal$ is an ideal and $\tzz$ is a trace on $\ideal$, then $\A=\ideal$ and so the assignment $\Gamma \mapsto F_\tzz(\Gamma)$ is a well defined isotopy invariant of $\A$-colored spherical graphs $\Gamma$.  Moreover, this assignment extends to an isotopy invariant of $\ideal$-admissible spherical graphs $\Gamma$.
\end{enumerate}

In Section \ref{S:AmbiTrace} we give a systematic approach to defining traces on ideals. More precisely, let $\oo$ be a class of  objects of $\cat$.  We define a \emph{left ambidextrous trace}  on $\oo$ as a family $t=\{t_X\colon \End_\cat(X)\rightarrow \kk\}_{X\in\oo}$ of \kt linear forms satisfying
\begin{equation*}
t_X\left (\, \psfrag{Y}[Bc][Bc]{\scalebox{.8}{$X'$}} \psfrag{X}[Bc][Bc]{\scalebox{.8}{$X$}} \psfrag{f}[Bc][Bc]{\scalebox{.9}{$f$}} \rsdraw{.45}{.9}{lambi1}  \, \right ) = t_{X'} \left (\, \psfrag{Y}[Bc][Bc]{\scalebox{.8}{$X'$}} \psfrag{X}[Bc][Bc]{\scalebox{.8}{$X$}} \psfrag{f}[Bc][Bc]{\scalebox{.9}{$f$}} \rsdraw{.45}{.9}{lambi2}  \right )
\end{equation*}
for all $X,X' \in \oo$ and $f \in \End_\cat(X' \otimes X^*)$.  Then we show that left ambidextrous traces on $\oo$ bijectively correspond to left traces on the left ideal generated by $\oo$.  We also give a notion of a spherical trace on $\oo$ and show that these traces bijectively correspond to traces on the ideal generated by $\oo$.

The invariant of spherical graphs described above in (ii) relies on a certain set~$\A$ of objects defined from a
one-sided trace on a one-sided ideal.  In Section \ref{S:Slope} we give a
characterization of $\A$, in terms of the slope, when the one-sided ideal is
the ideal of projective objects.  In \cite{GP4}, 
the slope and left ambidextrous traces are used to construct 3-manifold invariants from the categories of
finite dimensional modules over the non-restricted quantum groups associated
to simple Lie algebras introduced and studied by De~Concini, Kac, Procesi,
Reshetikhin, and Rosso in \cite{DK}, \cite{DPRR}.
The categories of finite dimensional representations of these quantum groups
cannot be braided as their tensor product is not commutative up to
isomorphism.

In \cite{GKP2} the results of this paper are used
to prove
that traces on the ideal of projective modules exist for factorizable ribbon Hopf algebras, modular representations of
  finite groups and their quantum doubles, complex and modular Lie
  (super)algebras, the $(1,p)$ minimal model in conformal field theory, and
  quantum groups at a root of unity.  In all these examples the usual trace restricted to the ideal of projective modules is zero but the modified trace suggested in this paper is non-zero.

Throughout the paper, we fix  a commutative ring $\kk$.

\section{Pivotal categories}\label{S:PivCat}
In this section we recall some well-known properties concerning pivotal categories.   Given a category $\cat$, the notation $X\in \cat$ means that $X$ is an object of~$\cat$.

\subsection{Pivotal categories}\label{sect-picotal-cat}
Recall that a \emph{pivotal} (or \emph{sovereign}) category is a (strict) monoidal category $\cat$, with unit object~$\unit$, such that to each
object $X\in \cat$ there are associated a \emph{dual object}~$X^*\in \cat$ and four morphisms
\begin{align*}
& \ev_X \colon X^*\otimes X \to\unit,  \quad \coev_X\colon \unit  \to X \otimes X^*,\\
&   \tev_X \colon X\otimes X^* \to\unit, \quad   \tcoev_X\colon \unit  \to X^* \otimes X,
\end{align*}
such that $(\ev_X,\coev_X)$ is a left
duality for $X$, $( \tev_X,\tcoev_X)$ is a right
duality for~$X$, and the induced left and right dual functors coincide as monoidal functors (see~\cite{Malt}). In particular, the left dual and right dual of a morphism $f\colon X \to Y$ in $\cat$ coincide:
\begin{align*}
f^*&= (\ev_Y \otimes  \Id_{X^*})(\Id_{Y^*}  \otimes f \otimes \Id_{X^*})(\Id_{Y^*}\otimes \coev_X)\\
 &= (\Id_{X^*} \otimes \tev_Y)(\Id_{X^*} \otimes f \otimes \Id_{Y^*})(\tcoev_X \otimes \Id_{Y^*}) \colon Y^*\to X^*,
\end{align*}
and $$
\phi=\{\phi_X=(\tev_{X}\otimes\Id_{X^{**}})(\Id_X\otimes\coev_{X^{*}})\colon X\to
X^{**}\}_{X \in \cat}
$$
is a monoidal natural isomorphism, called the \emph{pivotal structure}.

\subsection{Traces and dimensions}\label{sec-traces} Let $\cat$ be pivotal category. Recall that $\End_\cat(\unit)$ is a commutative monoid. The
\emph{left trace} $\tr_l(f)\in\End_\cat(\unit)$ and the \emph{right trace} $\tr_r(f) \in
\End_\cat(\unit)$ of an endomorphism $f $ of an object $X$ of $\cat$ are defined by
$$\tr_l(f)=\ev_X(\Id_{X^*} \otimes f) \tcoev_X  \quad {\text
{and}}\quad
 \tr_r(f)=\tev_X( f \otimes \Id_{X^*}) \coev_X  .
$$
Both traces are symmetric: $\tr_l(gh)=\tr_l(hg)$ and $
\tr_r(gh)=\tr_r(hg)$ for any morphisms $g\colon X \to Y$ and   $h\colon Y
\to X$  in $\cat$. Also $\tr_l(f)=\tr_r(f^*)=\tr_l(f^{**})$ for
any endomorphism $f $ of an object (and similarly  with $l,r$
exchanged). If
\begin{equation}\label{special}
\alpha\otimes \Id_X= \Id_X\otimes \alpha \quad \text{for all $\alpha\in
\End_{\cat}(\unit)$ and $X\in \cat$,}
\end{equation}
then the traces $\tr_l,\tr_r$
are $\otimes$-multiplicative: $$ \tr_l(f\otimes g)=\tr_l(f) \,
\tr_l(g) \quad \text{and} \quad \tr_r(f\otimes g)=\tr_r(f) \, \tr_r(g) $$ for all
endomorphisms $f,g$ of objects of~$\cat$.

The \emph{left} and \emph{right dimensions} of   $X\in \Ob (\cat)$
are defined by $ \dim_l(X)=\tr_l(\Id_X) $ and $
\dim_r(X)=\tr_r(\Id_X) $. Clearly,
$\dim_l(X)=\dim_r(X^*)=\dim_l(X^{**})$ (and similarly  with
$l,r$ exchanged). Note that isomorphic objects have the same
dimensions and $\dim_l(\unit)=\dim_r(\unit)=\Id_{\unit}$. If the category~$\cat$ satisfies \eqref{special}, then  left and right dimensions are $\otimes$-multiplicative: $\dim_l (X\otimes Y)= \dim_l (X)
\dim_l (Y)$ and $\dim_r (X\otimes Y)= \dim_r (X) \dim_r (Y)$ for any
$X,Y\in \cat$.

\subsection{Penrose graphical calculus}\label{sec-penrose} We represent morphisms in a category $\cat$ by plane   diagrams to be read from the bottom to the top.
The  diagrams are made of   oriented arcs colored by objects of
$\cat$  and of boxes (called \emph{coupons}) colored by morphisms of~$\cat$.  The arcs connect
the boxes and   have no mutual intersections or self-intersections.
The identity $\Id_X$ of $X\in \cat$, a morphism $f\colon X \to Y$,
and the composition of two morphisms $f\colon X \to Y$ and $g\colon Y \to
Z$ are represented as follows:
\begin{center}
\psfrag{X}[Bc][Bc]{\scalebox{.7}{$X$}} \psfrag{Y}[Bc][Bc]{\scalebox{.7}{$Y$}} \psfrag{h}[Bc][Bc]{\scalebox{.8}{$f$}} \psfrag{g}[Bc][Bc]{\scalebox{.8}{$g$}}
\psfrag{Z}[Bc][Bc]{\scalebox{.7}{$Z$}} $\Id_X=$ \rsdraw{.45}{.9}{identitymorph}\,,\quad $f=$ \rsdraw{.45}{.9}{morphism} ,\quad \text{and} \quad $gf=$ \rsdraw{.45}{.9}{morphismcompo}\,.
\end{center}
  If $\cat$ is
monoidal, then the monoidal product of two morphisms $f\colon X \to Y$
and $g \colon U \to V$ is represented by juxtaposition:
\begin{center}
\psfrag{X}[Bc][Bc]{\scalebox{.7}{$X$}} \psfrag{h}[Bc][Bc]{\scalebox{.8}{$f$}}
\psfrag{Y}[Bc][Bc]{\scalebox{.7}{$Y$}}  $f\otimes g=$ \rsdraw{.45}{.9}{morphism} \psfrag{X}[Bc][Bc]{\scalebox{.8}{$U$}} \psfrag{g}[Bc][Bc]{\scalebox{.8}{$g$}}
\psfrag{Y}[Bc][Bc]{\scalebox{.7}{$V$}} \rsdraw{.45}{.9}{morphism3}\,.
\end{center}
In a pivotal category, if an arc colored by $X$ is oriented upwards,
then the corresponding object   in the source/target of  morphisms
is $X^*$. For example, $\Id_{X^*}$  and a morphism $f\colon X^* \otimes
Y \to U \otimes V^* \otimes W$  may be depicted as:
\begin{center}
 $\Id_{X^*}=$ \, \psfrag{X}[Bl][Bl]{\scalebox{.7}{$X$}}
\rsdraw{.45}{.9}{identitymorphdual} $=$  \,
\psfrag{X}[Bl][Bl]{\scalebox{.7}{$X^*$}}
\rsdraw{.45}{.9}{identitymorph2}  \quad and \quad
\psfrag{X}[Bc][Bc]{\scalebox{.7}{$X$}}
\psfrag{h}[Bc][Bc]{\scalebox{.8}{$f$}}
\psfrag{Y}[Bc][Bc]{\scalebox{.7}{$Y$}}
\psfrag{U}[Bc][Bc]{\scalebox{.7}{$U$}}
\psfrag{V}[Bc][Bc]{\scalebox{.7}{$V$}}
\psfrag{W}[Bc][Bc]{\scalebox{.7}{$W$}} $f=$
\rsdraw{.45}{.9}{morphism2} \,.
\end{center}
The duality morphisms   are depicted as follows:
\begin{center}
\psfrag{X}[Bc][Bc]{\scalebox{.7}{$X$}} $\ev_X=$ \rsdraw{.45}{.9}{leval}\,,\quad
 $\coev_X=$ \rsdraw{.45}{.9}{lcoeval}\,,\quad
$\tev_X=$ \rsdraw{.45}{.9}{reval}\,,\quad
\psfrag{C}[Bc][Bc]{\scalebox{.7}{$X$}} $\tcoev_X=$
\rsdraw{.45}{.9}{rcoeval}\,.
\end{center}
The dual of a morphism $f\colon X \to Y$ and the
  traces of a morphism $g\colon X \to X$ can be depicted as
follows:
\begin{center}
\psfrag{X}[Bc][Bc]{\scalebox{.7}{$X$}} \psfrag{h}[Bc][Bc]{\scalebox{.8}{$f$}}
\psfrag{Y}[Bc][Bc]{\scalebox{.7}{$Y$}} \psfrag{g}[Bc][Bc]{\scalebox{.8}{$g$}}
$f^*=$ \rsdraw{.45}{.9}{dualmorphism2}$=$ \rsdraw{.45}{.9}{dualmorphism}\quad \text{and} \quad
$\tr_l(g)=$ \rsdraw{.45}{.9}{ltrace}\,,\quad  $\tr_r(g)=$ \rsdraw{.45}{.9}{rtrace}\,.
\end{center}
  If $\cat$ is pivotal, then    the morphisms represented by the diagrams
are invariant under isotopies of the diagrams in the plane keeping
fixed the bottom and   top endpoints.

\subsection{Partial traces}
Let $\cat$ be a pivotal category. For $X,Y,Z\in\cat$, the \emph{left partial trace} (with respect to $X$) is the map $\tr_l^{X} \colon \Hom_\cat(X \otimes Y, X \otimes Z) \to \Hom_\cat(Y,Z) $ defined for $f \in \Hom_\cat(X \otimes Y, X \otimes Z)$
by
$$
\tr_l^X(f)=(\ev_X \otimes \Id_Z)(\Id_{X^*} \otimes f)(\tcoev_X \otimes \Id_Y) = \psfrag{Y}[Bc][Bc]{\scalebox{.8}{$Y$}} \psfrag{Z}[Bc][Bc]{\scalebox{.8}{$Z$}} \psfrag{X}[Bc][Bc]{\scalebox{.8}{$X$}} \psfrag{f}[Bc][Bc]{\scalebox{.9}{$f$}} \rsdraw{.45}{.9}{ltraceXY} \,.
$$
Likewise, the \emph{right partial trace} (with respect to $X$) is the map
$\tr_r^X \colon \Hom_\cat(Y \otimes X, Z \otimes X) \to \Hom_\cat(Y,Z)$
defined, for $g \in  \Hom_\cat(Y \otimes X, Z \otimes X)$ by
$$
\tr_r^X(g)=(\Id_Z \otimes \tev_X)(g \otimes \Id_{X^*})(\Id_Y \otimes 
\coev_X) 
=\psfrag{Y}[Bc][Bc]{\scalebox{.8}{$Y$}} \psfrag{Z}[Bc][Bc]{\scalebox{.8}{$Z$}} \psfrag{X}[Bc][Bc]{\scalebox{.8}{$X$}} \psfrag{f}[Bc][Bc]{\scalebox{.9}{$g$}} \rsdraw{.45}{.9}{rtraceXY} \,.
$$

Note that $\tr_r^{X^*}(f^*)=\bigl(\tr_l^X(f) \bigr)^*$ and $\tr_l^{X^*}(g^*)=\bigl(\tr_r^X(g) \bigr)^*$.
In particular
\begin{align*}
 &\tr_l^{X^{**}}(f^{**})=\bigl(\tr_l^X(f) \bigr)^{**}=\phi_Y \tr_l^X(f) \phi_Y^{-1},\\
 &\tr_r^{X^{**}}(g^{**})=\bigl(\tr_r^X(g) \bigr)^{**}=\phi_X \tr_r^X(g) \phi_X^{-1},
\end{align*}
where $\phi=\{\phi_X\colon X\to X^{**}\}_{X \in \cat}$ is the pivotal structure of $\cat$.

Note that if $f\colon X \to X$ is an endomorphism in $\cat$, then
$$\tr_l^X(f)=\tr_l(f), \quad \tr_r^X(f)=\tr_r(f), \quad \text{and} \quad \tr_l^\unit(f)=f=\tr_r^\unit(f).$$

\subsection{Spherical categories}\label{sec-spherical}  A \emph{spherical category} is a pivotal category whose left and
right traces are equal, i.e.,  $\tr_l(f)=\tr_r(f)$ for every
endomorphism $f$ of an object. Then $\tr_l(f)$ and $ \tr_r(f)$ are
denoted $\tr(f)$ and called the \emph{trace of $f$}. Similarly, the
left and right dimensions of an object~$X$ are equal, denoted   $\dim(X)$,
and called the \emph{dimension of $X$}.

For spherical categories, the corresponding Penrose graphical
calculus has the following property:   the morphisms represented by
 diagrams are invariant under isotopies of   diagrams in the 2-sphere $S^2=\R^2\cup
\{\infty\}$, i.e., are preserved under isotopies pushing   arcs of
the diagrams across~$\infty$.  For example, the diagrams above
representing $\tr_l(f)$ and $\tr_r(f)$ are related by such an
isotopy. The condition $\tr_l(f)=\tr_r(f)$ for all $f$ is therefore
necessary (and in fact sufficient) to ensure this property.

\subsection{Linear categories}\label{sphesphe}
A \emph{monoidal \kt category} is a monoidal category $\cat$ such that its
 hom-sets are (left) \kt modules, the composition and monoidal product of morphisms are \kt bilinear,
  and  $\End_\cat(\unit)$ is a free \kt module of rank one.
Then the map $\kk \to \End_\cat(\unit), k \mapsto k \, \Id_\unit$  is a
\kt algebra isomorphism. It is used to identify $\End_\cat(\unit)=\kk$.

A pivotal \kt category satisfies \eqref{special}. Therefore, the traces $\tr_l,\tr_r$ and the
dimensions $\dim_l, \dim_r$ in such a category are
$\otimes$-multiplicative. Clearly,   $\tr_l,\tr_r$ are  \kt linear.

An object $X$ of a monoidal \kt category $\cat$ is
\emph{simple} if $\End_\cat(X)$ is a free \kt module of rank 1.
Equivalently, $X$ is simple if the \kt homomorphism $\kk \to
\End_\cat(X),\, k   \mapsto  k\, \Id_X$  is an isomorphism. By the
definition of a monoidal \kt category, the unit object $\unit$ is
simple. If $X$ is a simple object of $\cat$, we
denote by $\brk{\,}_V \colon \End_\cat(V) \to \kk$ the inverse of the \kt
linear isomorphism $\kk \to \End_\cat(V)$ defined by $k \mapsto k\, \Id_V$.

\section{Traces on ideals}\label{sect-ideaux}
In this section we introduce the notion of one-side ideals and one-side traces on one-side ideals.
We also study some basic properties of such ideals and traces. 

The notation $\oo \subset \cat$ means that $\oo$ is a class of objects of $\cat$.
If $\cat$ is a monoidal category, we denote by $\cat^\rev$ the category $\cat$ with opposite monoidal product defined by $X \otimes^\opp Y=Y \otimes X$ for $X,Y \in \cat$.

\subsection{Ideals} By a \emph{retract} of an object $X$ of a category $\cat$, we mean an object $U$ of~$\cat$ such that there exist morphisms $p\colon X \to U$ and $q\colon U \to X$ verifying $pq=\Id_U$.

A class $\oo \subset \cat$ is said to be \emph{closed under retraction} if any retract (in $\cat$) of an object of $\oo$ belongs to $\oo$.

A class $\oo$ of objects of a monoidal category $\cat$ is said to be \emph{closed under left} (resp.\@ \emph{right}) \emph{multiplication} if $Y\otimes X\in \oo$ (resp.\@ $X\otimes Y\in \oo$) for all $X\in \oo$ and $Y\in\cat$.

By a \emph{left} (resp.\@ \emph{right}) \emph{ideal} of a monoidal category $\cat$, we mean a class $\ideal \subset \cat$ which is closed under retraction and under left (resp.\@ right) multiplication. By an \emph{ideal} of a monoidal category $\cat$, we mean a class $\ideal \subset \cat$ which is both a left and right ideal.

Note that the closure under retraction implies that a left (resp.\@ right) ideal $\ideal$ of a monoidal category $\cat$ is \emph{replete}, meaning that if $Y \in \cat$ is isomorphic (in $\cat$) to some $X \in \ideal$, then $Y \in \ideal$. In particular, if $\cat$ is braided, then any left (resp.\@ right) ideal is a (two-sided) ideal.

\begin{lemma}\label{lem-dual-ideal}
Let $\cat$ be a pivotal category.
  \begin{enumerate}
\renewcommand{\labelenumi}{{\rm (\alph{enumi})}}
\item A left (resp.\@ right) ideal of $\cat$ is closed under biduality.
\item  An ideal of $\cat$ is closed under duality.
\end{enumerate}
\end{lemma}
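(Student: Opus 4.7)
\emph{Proof proposal.} The plan is to prove both parts by exhibiting the relevant dual as a retract of an object already known to lie in $\ideal$, using the pivotal structure for (a) and the snake identities for (b).

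For part (a), the key remark is that in a pivotal category the maps $\phi_X\colon X\to X^{**}$ assembling the pivotal structure are isomorphisms. Hence $X^{**}$ is (trivially) a retract of $X$: the pair $p=\phi_X\colon X\to X^{**}$ and $q=\phi_X^{-1}\colon X^{**}\to X$ lives entirely inside $\cat$ and satisfies $pq=\Id_{X^{**}}$. (Equivalently, one can invoke the repleteness statement made just before the lemma.) Closure of $\ideal$ under retraction then forces $X^{**}\in\ideal$. The same argument applies to right ideals, since it never uses left or right multiplication.

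For part (b), let $\ideal$ be a two-sided ideal and $X\in\ideal$. I would realize $X^*$ as a retract of $X^*\otimes X\otimes X^*$. First, this object belongs to $\ideal$: by right multiplication $X\otimes X^*\in\ideal$, and then by left multiplication $X^*\otimes(X\otimes X^*)\in\ideal$. To exhibit the retract, set
\begin{equation*}
q=\Id_{X^*}\otimes\coev_X\colon X^*\to X^*\otimes X\otimes X^*,\qquad
p=\ev_X\otimes\Id_{X^*}\colon X^*\otimes X\otimes X^*\to X^*.
\end{equation*}
The zigzag identity for the left duality $(\ev_X,\coev_X)$, namely $(\ev_X\otimes\Id_{X^*})(\Id_{X^*}\otimes\coev_X)=\Id_{X^*}$, gives $pq=\Id_{X^*}$. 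Closure under retraction then yields $X^*\in\ideal$.

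No real obstacle arises; the only conceptual point worth flagging is that part (b) genuinely needs two-sidedness, because the retract construction tensors by $X$ on one side and by $X^*$ on the other. A purely left (or right) ideal has no reason to contain $X^*$: for instance, in a generic pivotal category, $X^*$ is not a retract of $X^{\otimes n}$ for any $n$, so the route used for $X^{**}$ via the pivotal isomorphism is the only soft argument, and it is not available for single duals. This is exactly why only biduality (not duality) is preserved in (a).
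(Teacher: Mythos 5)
Your proof is correct and takes essentially the same route as the paper: part (a) via repleteness (closure under retraction) together with the pivotal isomorphism $X\simeq X^{**}$, and part (b) by exhibiting $X^*$ as a retract of $X^*\otimes X\otimes X^*$ via the very same $p=\ev_X\otimes\Id_{X^*}$ and $q=\Id_{X^*}\otimes\coev_X$. Your closing remark about why two-sidedness is genuinely needed in (b) is accurate and a nice addition, but the core argument matches the paper's exactly.
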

\begin{proof}
Part (a) follows from the facts that a left (resp.\@ right) ideal is replete and that the bidual $X^{**}$ of an object $X\in\cat$  is isomorphic to $X$ (via the pivotal structure).

Let us prove Part (b).
Let $\ideal$ be an ideal of $\cat$. Given $X\in\ideal$, set $p=\ev_X \otimes \id_{X^*}$ and $q=\id_{X^*} \otimes \coev_X$. Then $pq=\id_{X^*}$. Thus $X^*$ is a retract of $X^* \otimes X \otimes  X^*\in \ideal$ and so belongs to $\ideal$.
\end{proof}

For a class of objects $\ideal$ in a pivotal category $\cat$, we set
$$
\ideal^{*}=\{Y \in \cat\, | \,  \exists \, X \in \ideal, \; Y \simeq X^*  \} .
$$
Note that if $\ideal$ is a left or right ideal of $\cat$, then
$$\ideal^{*}= \{Y \in \cat\, | \, Y^* \in \ideal \}$$
(since ideals are replete and $Y\simeq Y^{**}$ for any $Y \in \cat$).
\begin{lemma}\label{L:id*}
  Let $\ideal$ be a replete class of objects of a pivotal category $\cat$.
 \begin{enumerate}
\renewcommand{\labelenumi}{{\rm (\alph{enumi})}}
\item $\ideal$ is a left (resp.\@ right) ideal if and only if
  $\ideal^{*}$ is a right (resp.\@ left) ideal.
\item $\ideal$ is an ideal if and only if $\ideal$ is a left (or right) ideal and $\ideal^{*}=\ideal$.
\end{enumerate}
\end{lemma}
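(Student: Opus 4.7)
The plan is to exploit three standard facts about pivotal categories: (i) the dual functor is contravariant monoidal, so $(X\otimes Y)^*\simeq Y^*\otimes X^*$; (ii) the pivotal structure gives a natural isomorphism $X\simeq X^{**}$; and (iii) duality preserves retracts, since if $p\co X\to U$ and $q\co U\to X$ satisfy $pq=\Id_U$, then $q^*p^*=\Id_{U^*}$, exhibiting $U^*$ as a retract of $X^*$. Combined with repleteness, these give the alternative description $\ideal^*=\{Y\in\cat\mid Y^*\in\ideal\}$ already noted in the excerpt, and also the identity $(\ideal^*)^*=\ideal$ whenever $\ideal$ is replete (because $Y\in(\ideal^*)^*$ iff $Y^{**}\in\ideal$ iff $Y\in\ideal$, using $Y\simeq Y^{**}$ and repleteness).

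For part (a), assume $\ideal$ is a left ideal; I show $\ideal^*$ is a right ideal. First, closure under retraction: if $V$ is a retract of $Y\in\ideal^*$, then $V^*$ is a retract of $Y^*\in\ideal$, so $V^*\in\ideal$ by closure of $\ideal$ under retraction, hence $V\in\ideal^*$. Second, closure under right multiplication: given $Y\in\ideal^*$ and $Z\in\cat$, we have $(Y\otimes Z)^*\simeq Z^*\otimes Y^*$, which lies in $\ideal$ since $Y^*\in\ideal$ and $\ideal$ is closed under left multiplication; by repleteness (which passes to $\ideal^*$) this gives $Y\otimes Z\in\ideal^*$. For the converse direction, if $\ideal^*$ is a right ideal, the same argument applied to $\ideal^*$ shows $(\ideal^*)^*$ is a left ideal, and $(\ideal^*)^*=\ideal$ by the observation above. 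The right-ideal/left-ideal case is entirely symmetric.

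For part (b), one direction is immediate: if $\ideal$ is an ideal, then it is a left ideal, and Lemma~\ref{lem-dual-ideal}(b) yields $\ideal^*\subseteq\ideal$; the reverse inclusion $\ideal\subseteq\ideal^*$ follows because for $Y\in\ideal$ we have $Y^*\in\ideal$ (again by Lemma~\ref{lem-dual-ideal}(b)) and $Y\simeq (Y^*)^*$, whence $Y\in\ideal^*$. For the other direction, suppose $\ideal$ is a left ideal with $\ideal^*=\ideal$; then part (a) ensures $\ideal^*$ is a right ideal, so $\ideal$ itself is a right ideal and therefore an ideal. The argument starting from the ``right ideal'' alternative is identical.

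I do not expect a real obstacle here; the only mild subtlety is keeping the repleteness hypothesis visible at each step, since both the equivalent description of $\ideal^*$ and the identity $(\ideal^*)^*=\ideal$ rely on being able to replace objects by isomorphic ones. Everything else is a direct transport of the closure conditions through the monoidal duality $(X\otimes Y)^*\simeq Y^*\otimes X^*$ and the pivotal isomorphism $X\simeq X^{**}$.
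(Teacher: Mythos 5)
Your proof is correct and follows essentially the same route as the paper: it rests on $(X\otimes Y)^*\simeq Y^*\otimes X^*$, the preservation of retracts under duality, and Lemma~\ref{lem-dual-ideal}(b), which are exactly the ingredients the paper cites. The only difference is that you spell out the converse of part (a) via the auxiliary identity $(\ideal^*)^*=\ideal$, whereas the paper leaves the forward/backward symmetry implicit.
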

\begin{proof}
Part (a) follows from the fact that $(X\otimes Y)^{*}\simeq
  Y^{*}\otimes X^{*}$ and from the equivalence asserting that an object $U$ is a retract of an object $X$ if
  and only if $U^{*}$ is a retract of $X^{*}$. Let us prove Part (b). If $\ideal$ is a left (resp.\@ right) ideal
  satisfying $\ideal=\ideal^{*}$ then, by Part (a), it is a right (resp.\@ left) ideal, and so an ideal. Conversely if $\ideal$ is an ideal of $\cat$, then $\ideal^{*}=\ideal$ by Lemma \ref{lem-dual-ideal}(b).
  \end{proof}

\subsection{Traces on ideals}\label{SS-sided-traces}
Let $\cat$ be a pivotal \kt category. A \emph{left trace} on a left ideal  $\ideal$ of $\cat$ is
a family $\tzz=\{\tzz_X\colon \End_\cat(X)\rightarrow \kk\}_{X\in\ideal}$
of \kt linear forms
such that
\begin{equation}\label{defltrace}
\tzz_{Y\otimes X}(f)=\tzz_X(\tr_l^Y(f)) \quad \text{and} \quad \tzz_V(gh)=\tzz_U(hg)
\end{equation}
for any $f\in \End_\cat(Y\otimes X)$,  $g \in \Hom_\cat(U,V)$, and $h \in \Hom_\cat(V,U)$, with $X,U,V\in \ideal$ and $Y\in \cat$.

A \emph{right trace} on a right ideal  $\ideal$ of $\cat$ is
a family $\tzz=\{\tzz_X\colon \End_\cat(X)\rightarrow \kk\}_{X\in\ideal}$
of \kt linear forms
such that
\begin{equation}\label{defrtrace}
\tzz_{X\otimes Z}(f)=\tzz_X(\tr_r^Z(f)) \quad \text{and} \quad \tzz_V(gh)=\tzz_U(hg)
\end{equation}
for any $f\in \End_\cat(X\otimes Z)$,  $g \in \Hom_\cat(U,V)$, and $h \in \Hom_\cat(V,U)$, with $X,U,V\in \ideal$ and $Z\in \cat$.

A \emph{trace} on a ideal  $\ideal$ of $\cat$ is
a family $\tzz=\{\tzz_X\colon \End_\cat(X)\rightarrow \kk\}_{X\in\ideal}$
of \kt linear forms
which is both a left and right trace on $\ideal$. Note that a trace $\tzz$ on an ideal $\ideal$ satisfies
$$
\tzz_{Y \otimes X\otimes Z}(f)=\tzz_X(\tr_l^Y\tr_r^Z(f))
$$
for any $f\in \End_\cat(Y \otimes X\otimes Z)$ with $X\in \ideal$ and $Y,Z\in \cat$.

\begin{lemma}\label{lem-dual-trace}
  \begin{enumerate}
\renewcommand{\labelenumi}{{\rm (\alph{enumi})}}
\item  If $\tzz$ is a left (resp.\@ right) trace on a left (resp.\@ right) ideal  $\ideal$ of~$\cat$, then
$$
\tzz_{X^{**}}(f^{**})=\tzz_X(f)
$$ for all $X \in \ideal$ and $f\in \End_\cat(X)$.
\item  If $\tzz$ is a  trace on an ideal  $\ideal$ of $\cat$, then
$$
\tzz_{X^{*}}(f^{*})=\tzz_X(f)
$$ for all $X \in \ideal$ and $f\in \End_\cat(X)$.
\item If $\tzz$ is a left (resp.\@ right) trace on a left (resp.\@ right) ideal
  $\ideal$ of $\cat$, then the family of \kt linear forms
  $\tzz^{\vee}=\{\tzz^{\vee}_{X}\colon \End_\cat(X)\rightarrow \kk\}_{X\in\ideal^{*}}$,
  defined by $$\tzz^{\vee}_X(f)=\tzz_{X^{*}}(f^{*}),$$ is a right (resp.\@ left) trace
  on $\ideal^{*}$.
\end{enumerate}
\end{lemma}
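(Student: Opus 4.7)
I would handle (a) first as a direct consequence of composition-invariance. Since the pivotal isomorphism $\phi_X\colon X \to X^{**}$ satisfies $f^{**} = \phi_X f \phi_X^{-1}$, setting $g = \phi_X$ and $h = f\phi_X^{-1}\colon X^{**} \to X$ gives $gh = f^{**}$ and $hg = f$. With $X^{**} \in \ideal$ supplied by Lemma~\ref{lem-dual-ideal}(a), composition-invariance (which holds for both left and right traces) yields $\tzz_{X^{**}}(f^{**}) = \tzz_X(f)$.

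For (c), I would verify the two defining axioms of a right trace for $\tzz^\vee$ on the right ideal $\ideal^*$ (Lemma~\ref{L:id*}(a)) by transferring them along duality. Composition-invariance uses $(gh)^* = h^* g^*$ and that $U, V \in \ideal^*$ implies $U^*, V^* \in \ideal$: $\tzz^\vee_V(gh) = \tzz_{V^*}(h^* g^*) = \tzz_{U^*}(g^* h^*) = \tzz^\vee_U(hg)$ by composition-invariance of $\tzz$. For the partial-trace axiom, combining $(X \otimes Z)^* = Z^* \otimes X^*$ with the identity $\tr_l^{Z^*}(f^*) = (\tr_r^Z(f))^*$ recalled earlier in the excerpt and the left-trace property of $\tzz$ gives
\[
  \tzz^\vee_{X \otimes Z}(f) = \tzz_{Z^* \otimes X^*}(f^*) = \tzz_{X^*}(\tr_l^{Z^*}(f^*)) = \tzz_{X^*}((\tr_r^Z(f))^*) = \tzz^\vee_X(\tr_r^Z(f)).
\]
The case of a right trace producing a left trace is symmetric.

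The substantive part is (b). I would choose $g = \id_{X^*} \otimes \coev_X\colon X^* \to X^* \otimes X \otimes X^*$ and $h = (\ev_X \otimes \id_{X^*})(\id_{X^*} \otimes f \otimes \id_{X^*})\colon X^* \otimes X \otimes X^* \to X^*$; the left-duality formula for $f^*$ recalled in Section~\ref{sect-picotal-cat} gives $hg = f^*$, and both $X^*$ and $X^* \otimes X \otimes X^*$ lie in $\ideal$ by Lemma~\ref{lem-dual-ideal}(b) and two-sided tensor closure. Composition-invariance together with the partial-trace identity $\tzz_{Y \otimes X \otimes Z}(F) = \tzz_X(\tr_l^Y \tr_r^Z(F))$ (available because $\tzz$ is a two-sided trace) yields
\[
  \tzz_{X^*}(f^*) = \tzz_{X^* \otimes X \otimes X^*}(gh) = \tzz_X\bigl(\tr_l^{X^*} \tr_r^{X^*}(gh)\bigr).
\]
Everything then reduces to the identity $\tr_l^{X^*} \tr_r^{X^*}(gh) = f$. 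Graphically, the two partial traces close the outer $X^*$-strands of $gh$, and the resulting planar diagram, built from $f$ together with one $\ev_X$, one $\coev_X$ and the external closure arcs, collapses to a single $f$-decorated strand via snake identities. The main obstacle is precisely this final step: an explicit algebraic verification requires combining the snake identity $(\id_X \otimes \ev_X)(\coev_X \otimes \id_X) = \id_X$ with the pivotal coherence relating $\coev_X$, $\tcoev_X$ and $\phi_X$ (e.g.\ $\tcoev_X = (\id_{X^*} \otimes \phi_X^{-1})\coev_{X^*}$), even though the graphical picture makes the identity visually transparent.
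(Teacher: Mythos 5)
Your proposal is correct and follows the paper's proof essentially step for step: (a) via composition-invariance with $f^{**}=\phi_X f\phi_X^{-1}$, (b) via the same factorization of $f^*$ through $X^*\otimes X\otimes X^*$ together with the combined identity $\tr_l^{X^*}\tr_r^{X^*}$, and (c) by transporting the two axioms along duality. The one small gloss is in (c): you write $(X\otimes Z)^*=Z^*\otimes X^*$ as an equality, whereas the paper routes through the canonical isomorphism $\varphi\colon(X\otimes Z)^*\to Z^*\otimes X^*$ and conjugates $f^*$ by it before applying the left-trace axiom; this is a cosmetic technicality and does not affect the substance of the argument.
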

\begin{proof}
Let us prove Part (a). Denote by $\phi$ the pivotal structure of $\cat$  (see Section~\ref{sect-picotal-cat}).
Recall $X^{**} \in \ideal$ (see Lemma~\ref{lem-dual-ideal}) and $f^{**}=\phi_X f \phi_X^{-1}$. Therefore,
$$\tzz_{X^{**}}(f^{**})=\tzz_{X^{**}}(\phi_X f \phi_X^{-1})=\tzz_{X}( f \phi_X^{-1}\phi_X)=\tzz_X(f).$$

Let us prove Part (b). Recall that $X^{*} \in \ideal$ (see Lemma~\ref{lem-dual-ideal}). In particular, $X^* \otimes X \in \ideal$ and $X^* \otimes X \otimes X^* \in \ideal$.  Set $g=(\ev_X \otimes \Id_{X^*})(\Id_{X^*} \otimes f \otimes \Id_{X^*})$ and $h=\id_{X^*} \otimes \coev_X$.
We have: $gh=f^*$ and $\tr_l^{X^*}(\tr_r^{X^*}(hg))=f$. Therefore,
\begin{align*}
\tzz_{X^*}(f^*)&=\tzz_{X^*}(gh)=\tzz_{X^*\otimes X \otimes X^*}(hg)\\
&=\tzz_{X^*\otimes X}\bigl(\tr_r^{X^*}(hg)\bigr) =\tzz_X\bigl(\tr_l^{X^*}(\tr_r^{X^*}(hg))\bigr)=\tzz_X(f)
\end{align*}
by the properties of a (two-sided) trace.

Let us prove the left version of Part (c), from which the right version can be deduced by using $\cat^\rev$. For $g \in \Hom_\cat(U,V)$ and $h \in \Hom_\cat(V,U)$,
with $U,V\in \ideal^{*}$, we have:
$$
\tzz^{\vee}_{V}(gh)=\tzz_{V^{*}}((gh)^{*})=\tzz_{V^{*}}(h^{*}g^{*})=\tzz_{U^{*}}(g^{*}h^{*})
=\tzz_{U^{*}}((hg)^{*})=\tzz^{\vee}_{U}(hg).
$$  Now let $f\in\End_\cat(Y\otimes
Z)$ with $X \in \ideal^*$ and $Z\in\cat$. Let $\varphi\colon (X \otimes Z)^* \to Z^* \otimes X^*$ be the canonical isomorphism. Then $\tr_l^{Z^*}\!(\varphi f^* \varphi^{-1})=\bigl(\tr_r^{Z}(f)\bigr)^*$ and so
\begin{align*}
\tzz^{\vee}_{X\otimes Z}(f)&=\tzz_{(X \otimes Z)^*}(f^*)=\tzz_{Z^* \otimes X^*}(\varphi f^*\varphi^{-1})\\
&=\tzz_{X^*}\bigl(\tr_r^{Z^*}(\varphi f^*\varphi^{-1})\bigr)=\tzz_{X^*}\bigl((\tr_r^{Z}(f))^*\bigr)=\tzz_{X}^\vee(\tr_r^{Z}(f))
\end{align*}
since $\tzz$ is a left trace and $X^* \in \ideal$. Hence, $\tzz^{\vee}$ is a right trace.
\end{proof}

\begin{lemma}\label{lem-trace-on-ideal-are-ambi}
Let $\cat$ be a pivotal \kt category. Denote by $\phi$ the pivotal structure of $\cat$ (see Section~\ref{sect-picotal-cat}). Then:
  \begin{enumerate}
\renewcommand{\labelenumi}{{\rm (\alph{enumi})}}
 \item  If  $\tzz$ is a left trace on a left ideal  $\ideal$ of~$\cat$ then  
\begin{equation}\label{lambi}
\tzz_X\bigl(\phi_X^{-1} \bigl(\tr_l^{X'}\!(f)\bigr)^* \phi_X\bigr)=\tzz_{X'}\bigl(\tr_r^{X^*}\!(f)\bigr)
\end{equation}
for all $X,X' \in \ideal$ and $f \in \End_\cat(X' \otimes X^*)$.
 \item  If $\tzz$ is a right trace on a right ideal  $\ideal$ of~$\cat$ then 
\begin{equation}\label{rambi}
\tzz_X\bigl(\phi_X^{-1} \bigl(\tr_r^{X'}\!(g)\bigr)^* \phi_X\bigr)=\tzz_{X'}\bigl(\tr_l^{X^*}\!(g)\bigr)
\end{equation}
 for all $X,X' \in \ideal$ and $g \in \End_\cat(X^* \otimes X')$.
 \item If $\tzz$ is a trace on an ideal $\ideal$ of~$\cat$ then 
 \begin{equation}\label{ambi1}
\tzz_X\bigl(\phi_X^{-1} \bigl(\tr_l^{X'\otimes Y}\!(f)\bigr)^* \phi_X\bigr)=\tzz_{X'}\bigl(\tr_r^{Y \otimes X^*}\!(f)\bigr),
\end{equation}
and 
\begin{equation}\label{ambi2}
\tzz_X\bigl(\phi_X^{-1} \bigl(\tr_r^{Y\otimes X'}\!(g)\bigr)^* \phi_X\bigr)=\tzz_{X'}\bigl(\tr_l^{X^* \otimes Y}\!(g)\bigr),
\end{equation}
for all $X,X' \in \ideal$, $Y \in \cat$, $f \in \End_\cat(X' \otimes Y \otimes X^*)$, and $g\in \End_\cat(X^*\otimes Y \otimes X')$.
\end{enumerate}
\end{lemma}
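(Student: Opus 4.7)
The plan is to prove part (a) by threading the two axioms of a left trace---cyclicity and the left partial trace property---around a single explicit morphism identity, and then to deduce parts (b) and (c) from (a).

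For part (a), the first move is to apply cyclicity of $\tzz$ with $g=\phi_X\colon X \to X^{**}$ and $h=\phi_X^{-1}(\tr_l^{X'}(f))^*\colon X^{**}\to X$, which is legitimate since $X^{**}\in \ideal$ by Lemma~\ref{lem-dual-ideal}(a); this immediately yields $\tzz_X(\phi_X^{-1}(\tr_l^{X'}(f))^* \phi_X) = \tzz_{X^{**}}((\tr_l^{X'}(f))^*)$. I would then establish the key morphism identity
\[
(\tr_l^{X'}(f))^* = \tr_l^{X' \otimes X^*}\bigl((f \otimes \Id_{X^{**}})(\Id_{X'} \otimes \coev_{X^*} \tev_{X^*})\bigr)
\]
in $\End_\cat(X^{**})$, by factoring $\tr_l^{X'\otimes X^*}=\tr_l^{X^*}\tr_l^{X'}$, pulling the inner $\tr_l^{X'}$ through the tensor structure (using the naturality $\tr_l^{X'}(F\circ (\Id_{X'}\otimes G))=\tr_l^{X'}(F)\circ G$ and $\tr_l^{X'}(f\otimes\Id_{X^{**}})=\tr_l^{X'}(f)\otimes\Id_{X^{**}}$), applying the triangle identity $(\Id_{X^{**}} \otimes \tev_{X^*})(\tcoev_{X^*} \otimes \Id_{X^{**}}) = \Id_{X^{**}}$ to collapse the center of the resulting diagram, and matching the outcome against the explicit formula for the dual morphism $(\tr_l^{X'}(f))^*$ via the left duality.

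With this identity in hand, the left partial trace axiom applied in reverse converts $\tzz_{X^{**}}$ into $\tzz_{X'\otimes X^*\otimes X^{**}}$ (the latter object lies in $\ideal$ since $X^{**}\in\ideal$), and one final cyclicity step, with $g = \Id_{X'} \otimes \tev_{X^*}$ and $h = (f \otimes \Id_{X^{**}})(\Id_{X'} \otimes \coev_{X^*})$, reduces $\tzz_{X'\otimes X^*\otimes X^{**}}(hg)$ to $\tzz_{X'}(gh)=\tzz_{X'}(\tr_r^{X^*}(f))$, which is the desired RHS.

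Part (b) then follows by applying (a) to the reverse category $\cat^{\rev}$, in which a right trace on a right ideal becomes a left trace on the same class viewed as a left ideal. For part (c), since $\ideal$ is now two-sided one has $X'\otimes Y\in\ideal$, so (a) applies with $X'$ replaced by $X'\otimes Y$; the resulting RHS $\tzz_{X'\otimes Y}(\tr_r^{X^*}(f))$ is then rewritten as $\tzz_{X'}(\tr_r^{Y\otimes X^*}(f))$ using the right trace axiom together with the composition rule $\tr_r^Y\circ \tr_r^{X^*}=\tr_r^{Y\otimes X^*}$, giving \eqref{ambi1}; equation~\eqref{ambi2} is obtained by the symmetric argument from (b) using $\tr_l^Y\circ \tr_l^{X^*}=\tr_l^{X^*\otimes Y}$ and the left trace axiom. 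The main obstacle is the morphism identity in the second step, which encodes the graphical ``180${}^\circ$ rotation'' of the diagram for $f$ in purely algebraic terms; every remaining step is routine bookkeeping with the trace axioms.
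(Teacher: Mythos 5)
Your proof is correct, and it takes a genuinely different route from the paper's.

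The paper proves part (a) by constructing explicit auxiliary morphisms $\alpha\colon X^*\otimes X\to X'^*\otimes X'$ and $\beta\colon X'^*\otimes X'\to X^*\otimes X$ built from $f$, $\coev$, $\ev$, and $\phi_X$, and then chaining partial-trace/cyclicity/partial-trace: $\tzz_X(\tr_l^{X^*}(\beta\alpha)) = \tzz_{X^*\otimes X}(\beta\alpha) = \tzz_{X'^*\otimes X'}(\alpha\beta) = \tzz_{X'}(\tr_l^{X'^*}(\alpha\beta))$, with two diagrammatic identities showing $\tr_l^{X^*}(\beta\alpha)$ and $\tr_l^{X'^*}(\alpha\beta)$ equal the two sides of \eqref{lambi}. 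Your proof instead chains cyclicity/partial-trace/cyclicity: you first conjugate away the $\phi_X$ using cyclicity, pass to $\tzz_{X^{**}}$, re-express $(\tr_l^{X'}(f))^*$ as a left partial trace $\tr_l^{X'\otimes X^*}$ of $h=(f\otimes\Id_{X^{**}})(\Id_{X'}\otimes\coev_{X^*}\tev_{X^*})$, invert the partial-trace axiom, and finish with a second cyclicity step. I checked your key identity: after pulling $\tr_l^{X'}$ through, the residual is $(\Id_{X^{**}}\otimes\coev_{X^*}\tev_{X^*})(\tcoev_{X^*}\otimes\Id_{X^{**}})$, which the right-duality zigzag collapses to $\Id_{X^{**}}\otimes\coev_{X^*}$, matching the explicit left-duality expression for $g^*$. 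Both approaches draw on the same two axioms of a left trace plus closedness of $\ideal$ (yours uses $X^{**},\, X'\otimes X^*\otimes X^{**}\in\ideal$; the paper uses $X^*\otimes X,\, X'^*\otimes X'\in\ideal$). A real improvement in your version is part (c): you derive \eqref{ambi1} from (a) applied with $X'\otimes Y$ in place of $X'$ (legitimate since $\ideal$ is two-sided), followed by the right-trace axiom and $\tr_r^{Y}\tr_r^{X^*}=\tr_r^{Y\otimes X^*}$, and symmetrically for \eqref{ambi2}; the paper instead reruns the full $\alpha$--$\beta$ argument with a $Y$ strand spliced in. Your deduction is cleaner and illustrates that (c) carries no new content beyond (a), (b), and two-sidedness.
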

\begin{proof}
Let us prove Part (a). Let $f \in \End_\cat(X' \otimes X^*)$ where $X,X' \in \ideal$. Set
$$
\alpha=(\Id_{X'^*}  \otimes \Id_{X'} \otimes \tev_{X^*})(\Id_{X'^*} \otimes f \otimes \Id_{X^{**}})(\tcoev_{X'} \otimes \Id_{X^*} \otimes \phi_X)=\, \psfrag{Y}[Bc][Bc]{\scalebox{.8}{$X'$}} \psfrag{X}[Bc][Bc]{\scalebox{.8}{$X$}} \psfrag{f}[Bc][Bc]{\scalebox{.9}{$f$}} \rsdraw{.45}{.9}{dem-ambi1}  
$$
and
$$
\beta=(\id_{X^*}\otimes \phi_X^{-1})\, \coev_{X^*} \ev_{X'} =\, \psfrag{Y}[Bc][Bc]{\scalebox{.8}{$X'$}} \psfrag{X}[Bc][Bc]{\scalebox{.8}{$X$}}  \rsdraw{.45}{.9}{dem-ambi2} .
$$
Since $X^* \otimes X \in \ideal$, $X'^* \otimes X'  \in \ideal$, and $\tzz$ is a left trace, we have:
$$
\tzz_X(\tr_l^{X^*}(\beta\alpha))=\tzz_{X^* \otimes X}(\beta\alpha)=\tzz_{X'^* \otimes X'}(\alpha\beta)=\tzz_{X'}(\tr_l^{X'^*}(\alpha\beta)).
$$
Now
$$
\tr_l^{X^*}(\beta\alpha)= \psfrag{Y}[Bc][Bc]{\scalebox{.8}{$X'$}} \psfrag{X}[Bc][Bc]{\scalebox{.8}{$X$}} \psfrag{f}[Bc][Bc]{\scalebox{.9}{$f$}} \rsdraw{.45}{.9}{lambi1} =\phi_X^{-1} \bigl(\tr_l^{X'}\!(f)\bigr)^* \phi_X
$$  
and
$$ \tr_l^{X'^*}(\alpha\beta)= \psfrag{Y}[Bc][Bc]{\scalebox{.8}{$X'$}} \psfrag{X}[Bc][Bc]{\scalebox{.8}{$X$}} \psfrag{f}[Bc][Bc]{\scalebox{.9}{$f$}} \rsdraw{.45}{.9}{lambi2} =\tr_r^{X^*}\!(f)\,.
$$

Therefore, \eqref{lambi} is satisfied. 
We deduce Part (b) from Part (a) by using $\cat^\rev$. Let us prove Part (c). Let $f \in \End_\cat(X' \otimes Y \otimes X^*)$, where $X,X' \in \ideal$ and $Y \in \cat$. Set
$$
\alpha=\!\! \psfrag{Y}[Bc][Bc]{\scalebox{.8}{$X'$}} \psfrag{A}[Bc][Bc]{\scalebox{.8}{$Y$}} \psfrag{X}[Bc][Bc]{\scalebox{.8}{$X$}} \psfrag{f}[Bc][Bc]{\scalebox{.9}{$f$}} \rsdraw{.45}{.9}{dem-ambi8}   \quad \text{and} \quad \beta= \psfrag{Y}[Bc][Bc]{\scalebox{.8}{$X'$}} \psfrag{A}[Bc][Bc]{\scalebox{.8}{$Y$}} \psfrag{X}[Bc][Bc]{\scalebox{.8}{$X$}} \psfrag{f}[Bc][Bc]{\scalebox{.9}{$f$}} \rsdraw{.45}{.9}{dem-ambi9} \,.
$$
Since $Y \otimes X^* \otimes X \in \ideal$, $X'^* \otimes X' \otimes Y  \in \ideal$, and $t$ is a trace, we have:
$$
\tzz_{X}(\tr_l^{Y \otimes X^*}(\beta\alpha))=\tzz_{Y \otimes X^* \otimes X}(\beta\alpha)=\tzz_{X'^* \otimes X' \otimes Y}(\alpha\beta)=\tzz_{X'}(\tr_l^{X'^*} \tr_r^Y(\alpha\beta)).
$$
Now
$$
\tr_l^{Y \otimes X^*}(\beta\alpha)= \psfrag{Y}[Bc][Bc]{\scalebox{.8}{$Y$}} \psfrag{W}[Bc][Bc]{\scalebox{.8}{$X'$}} \psfrag{X}[Bc][Bc]{\scalebox{.8}{$X$}} \psfrag{f}[Bc][Bc]{\scalebox{.9}{$f$}} \rsdraw{.45}{.9}{lambi1b}   \quad \text{and} \quad \tr_l^{X'^*} \tr_r^Y(\alpha\beta)= \psfrag{W}[Bc][Bc]{\scalebox{.8}{$X'$}} \psfrag{Y}[Bc][Bc]{\scalebox{.8}{$Y$}} \psfrag{X}[Bc][Bc]{\scalebox{.8}{$X$}} \psfrag{f}[Bc][Bc]{\scalebox{.9}{$f$}} \rsdraw{.45}{.9}{lambi2b} \,.
$$
Therefore, $\tzz$ satisfies \eqref{ambi1}. Likewise, given $g \in \End_\cat(X^* \otimes Y \otimes X')$, by using
$$
\alpha=\!\! \psfrag{Y}[Bc][Bc]{\scalebox{.8}{$X'$}} \psfrag{A}[Bc][Bc]{\scalebox{.8}{$Y$}} \psfrag{X}[Bc][Bc]{\scalebox{.8}{$X$}} \psfrag{f}[Bc][Bc]{\scalebox{.9}{$g$}} \rsdraw{.45}{.9}{dem-ambi10}   \quad \text{and} \quad \beta= \psfrag{Y}[Bc][Bc]{\scalebox{.8}{$X'$}} \psfrag{A}[Bc][Bc]{\scalebox{.8}{$Y$}} \psfrag{X}[Bc][Bc]{\scalebox{.8}{$X$}} \psfrag{f}[Bc][Bc]{\scalebox{.9}{$g$}} \rsdraw{.45}{.9}{dem-ambi11} \,,
$$
we obtain that $\tzz$ satisfies \eqref{ambi2}.
\end{proof}

\subsection{Modified dimensions}
If $\tzz$ is a left (resp.\@ right) trace on a left (resp.\@ right) ideal
  $\ideal$ in a pivotal \kt category $\cat$, the
\emph{left} (resp.\@ \emph{right}) \emph{modified dimension} (associated with
$\tzz$) is the function defined on the objects $V\in \ideal$ by
$$\qd_l(V)=\tzz_V(\Id_V) \qquad \left ( \mathrm{resp.\ } \qd_r(V)=\tzz_V(\Id_V) \right ).$$
Note that an immediate consequence of the definition of a left trace (resp.\@ right trace) is that isomorphic objects have equal modified dimensions.

Let $\tzz$ be a trace on an ideal
  $\ideal$ in $\cat$, the
\emph{modified dimension} (associated with
$\tzz$) is the function defined on the objects $V\in \ideal$ by
$$\qd(V)=\tzz_V(\Id_V).$$
Note that if $V \in \ideal$, then $V^*\in\ideal$ and $\qd(V)=\qd(V^*)$ by Lemmas~\ref{lem-dual-ideal} and~\ref{lem-dual-trace}.

\section{Invariants of closed graphs} \label{S:InvClosedGraph}

\subsection{Colored graphs} Let $\cat$ be a pivotal category.  By a
\emph{$\cat$-colored ribbon graph} in an oriented surface $\Sigma$, we mean a
graph embedded in $\Sigma$ whose edges are oriented and colored by objects of
$\cat$ and whose vertices lying in $\operatorname{Int} \Sigma=\Sigma-\partial
\Sigma $ are thickened to coupons colored by morphisms of~$\cat$ (as in
Penrose graphical calculus, see Section~\ref{sec-penrose}).  The edges of a
$\cat$-colored graph do not meet each other and may meet the coupons only at
the bottom and top sides.  The intersection of a $\cat$-colored ribbon graph
in $\Sigma$ with $\partial \Sigma$ is required to be empty or to consist only
of vertices of valency~1.

A $\cat$-colored ribbon graph in $\R^2$ (with counterclockwise orientation) is
called \emph{planar}. A $\cat$-colored ribbon graph in
$S^{2}=\R^2\cup\{\infty\}$ is called \emph{spherical}.

The $\cat$-colored ribbon graphs in $\R\times [0,1]$ (with counterclockwise
orientation) form a category $\Graph_\cat$ as follows: objects of
$\Graph_\cat$ are finite sequences of pairs $(X,\varepsilon)$, where $X\in
\cat$ and $\varepsilon=\pm$.  Morphisms of $\Graph_\cat$ are isotopy classes
of $\cat$-colored ribbon graphs in $\R\times [0,1]$. Composition, identities,
tensor multiplication, left and right duality in $\Graph_\cat$ are defined in
the standard way. In particular,
$$
((V_1,\ep_1),\dots,(V_n,\ep_n))^*=((V_n,-\ep_n),\dots,(V_1,-\ep_1))
$$
and the dual $T^*$ of a $\cat$-colored ribbon graph $T$ in $\R \times [0,1]$
is obtained by rotating~$T$ by $\pi$.
For example, if
\begin{center}
\psfrag{U}[Bl][Bl]{\scalebox{.8}{$U$}}
\psfrag{V}[Br][Br]{\scalebox{.8}{$V$}}
\psfrag{W}[Bl][Bl]{\scalebox{.8}{$W$}}
\psfrag{h}[Bc][Bc]{\scalebox{.9}{$h$}}
$T=$\rsdraw{.45}{.9}{exampT} $\co (U,+) \to ((V,+),(W,-))$
\end{center}
where $h \in \Hom_\cat(U,V \otimes W)$, then
\begin{center}
\psfrag{U}[Bl][Bl]{\scalebox{.8}{$U$}}
\psfrag{V}[Bl][Bl]{\scalebox{.8}{$V$}}
\psfrag{W}[Br][Br]{\scalebox{.8}{$W$}}
\psfrag{h}[Bc][Bc]{\scalebox{.9}{$h$}}
$T^*=$\rsdraw{.45}{.9}{exampTs0} $=$ \psfrag{U}[Br][Br]{\scalebox{.8}{$U$}} \rsdraw{.45}{.9}{exampTs2} $=$ \psfrag{U}[Bl][Bl]{\scalebox{.8}{$U$}} \rsdraw{.45}{.9}{exampTs1} $\co ((W,+),(V,-)) \to (U,-)$.
\end{center}
This makes $\Graph_\cat$ into a pivotal category. By Penrose graphical
calculus, one obtains a (strict) monoidal functor
$$F\colon \Graph_\cat\to\cat.$$
In particular, $F$ sends $(X, \varepsilon)\in\Ob(\Graph_\cat)$ to
$$
X^\varepsilon=\left\{ \begin{array}{ll} X & \text{if $\varepsilon=+$,} \\ X^* & \text{if $\varepsilon=-$.} \end{array} \right.
$$

By a \emph{$\cat$-colored 1-1-ribbon graph}, we mean an endomorphism $T$ in $\Graph_\cat$ of a pair $(X,\varepsilon)$, where $X\in \cat$ and $\varepsilon=\pm$. The pair $(X,\varepsilon)$ is called the \emph{section} of~$T$. The \emph{left and right planar closures} of a $\cat$-colored 1-1-ribbon graph $T$ are the planar $\cat$-colored ribbon graphs (embedded in the interior of $\R\times [0,1]$) defined by:
$$\mathrm{cl}_l(T)=\left\{\begin{array}{ll} \psfrag{T}[Bc][Bc]{\scalebox{.8}{$T$}}
\rsdraw{.45}{.9}{lclosurep} &\text{if $\varepsilon=+$,} \vspace*{.3em} \\ \psfrag{T}[Bc][Bc]{\scalebox{.8}{$T$}} \rsdraw{.45}{.9}{lclosurem} &\text{if $\varepsilon=-$,} \end{array} \right.
 \quad\text{and} \quad \mathrm{cl}_r(T)=\left\{\begin{array}{ll} \psfrag{T}[Bc][Bc]{\scalebox{.8}{$T$}} \rsdraw{.45}{.9}{rclosurep} &\text{if $\varepsilon=+$,} \vspace*{.3em} \\ \psfrag{T}[Bc][Bc]{\scalebox{.8}{$T$}} \rsdraw{.45}{.9}{rclosurem} &\text{if $\varepsilon=-$.} \end{array} \right.$$
Note that $F(\mathrm{cl}_l(T))=\tr_l(F(T))$ and $F(\mathrm{cl}_r(T))=\tr_r(F(T))$.

By a \emph{left} (resp.\@ \emph{right}) \emph{cutting presentation} of a planar $\cat$-colored graph $\Gamma$, we mean a  $\cat$-colored  1-1-ribbon graph $T$ such that $\Gamma=\mathrm{cl}_l(T)$ (resp.\@ $\Gamma=\mathrm{cl}_r(T)$).

Considered as spherical $\cat$-colored ribbon graphs, the left and right planar closures of a $\cat$-colored 1-1-ribbon graph $T$ are isotopic, and are called the \emph{spherical closure} of $T$ and denoted $\mathrm{cl}(T)$.
By a \emph{cutting presentation} of a spherical $\cat$-colored graph $\Gamma$, we mean a  $\cat$-colored  1-1-ribbon graph $T$ such that $\Gamma=\mathrm{cl}(T)$.

\subsection{Invariants of admissible graphs}\label{s:inv-G}
Let $\cat$ be a pivotal \kt category and $\ideal$ be  a left (resp.\@ right) ideal of $\cat$. A
$\cat$-colored 1-1-ribbon graph is \emph{left} (resp.\@ \emph{right})
\emph{$\ideal$-admissible} if its section $(V,\varepsilon)$ has the property that
$V^\varepsilon\in\ideal$. By a \emph{left} (resp.\@ \emph{right}) \emph{$\ideal$-admissible planar graph} we mean a planar $\cat$-colored ribbon graphs which admits a left (resp.\@ right) $\ideal$-admissible
left (resp.\@ right) cutting presentation.

If $\tzz$ is a left (resp.\@ right) trace on $\ideal$ and $\Gamma$ is a left (resp.\@ right) $\ideal$-admissible planar graph, we set
$$
    F_\tzz^l(\Gamma)=\tzz_{V^\varepsilon}(F(T)) \qquad \bigl(\text{resp.}\
    F_\tzz^r(\Gamma)=\tzz_{V^\varepsilon}(F(T)) \,\bigr ),
$$
where $T$ is a left (resp.\@ right) $\ideal$-admissible left (resp.\@ right)
cutting presentation of $\Gamma$ with section $(V,\varepsilon)$.

When $\ideal$ is an ideal,  by a \emph{$\ideal$-admissible spherical graph}, we mean spherical $\cat$-colored ribbon graph such that at least one of its edges is colored by an element of~$\ideal$. Note that such a graph admits a cutting presentation with section $(V,+)$ with $V\in\ideal$.

\begin{theorem}\label{thm-inv-graph}
Let $\tzz$ be a left (resp.\@ right) trace on a left (resp.\@ right) ideal $\ideal$ of $\cat$.
Then $F_\tzz^l$ (resp.\@ $F_\tzz^r$) is an isotopy invariant  of left (resp.\@ right)
    $\ideal$-admissible graphs.  Moreover, if $\ideal$ is an ideal and $\tzz$ is a trace on $\ideal$, then $F_\tzz^l=F_\tzz^r$ and this invariant extends to an isotopy invariant of $\ideal$-admissible spherical graphs, denoted $F_\tzz$.
\end{theorem}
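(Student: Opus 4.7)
The plan is to establish well-definedness and isotopy invariance of $F_\tzz^l$ on left $\ideal$-admissible planar graphs, deduce the corresponding statements for $F_\tzz^r$ by applying the result in $\cat^\rev$, and then in the two-sided case prove $F_\tzz^l = F_\tzz^r$ and extend the common invariant to $\ideal$-admissible spherical graphs.

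For well-definedness of $F_\tzz^l$, suppose a left $\ideal$-admissible planar graph $\Gamma$ admits two left cutting presentations $T_1,T_2$ with $\ideal$-admissible sections $(V_1,\ep_1),(V_2,\ep_2)$. I would show that $T_1$ and $T_2$ are related by a finite sequence of elementary moves, each of which preserves the scalar $\tzz_{V^\ep}(F(T))$: (i) isotopy of the 1-1-ribbon graph in $\R\times[0,1]$ relative to its boundary, which leaves $F(T)$ invariant; (ii) a cyclic re-cutting, in which $T$ is decomposed as $g\circ h$ with intermediate object $U\in\ideal$ and replaced by $h\circ g$, absorbed by the cyclicity $\tzz_V(gh)=\tzz_U(hg)$; (iii) a re-cutting that moves a piece of the left closure into the interior, replacing a section $(Y\otimes X,+)$ by $(X,+)$ with the two associated morphisms related by $F(T_{\mathrm{new}})=\tr_l^Y(F(T_{\mathrm{old}}))$, absorbed by the partial-trace axiom $\tzz_{Y\otimes X}(f)=\tzz_X(\tr_l^Y(f))$. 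Sections with $\ep=-$ are handled by passing to biduals via Lemma~\ref{lem-dual-trace}(a) and the pivotal structure. Isotopy invariance of $F_\tzz^l$ then follows because any planar isotopy of $\Gamma$ lifts to an isotopy of some cutting presentation. The right-trace version is obtained by applying this argument in the reversed category $\cat^\rev$.

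Assume now that $\ideal$ is an ideal and $\tzz$ is a two-sided trace. To prove $F_\tzz^l(\Gamma)=F_\tzz^r(\Gamma)$ for every $\ideal$-admissible planar $\Gamma$, fix an edge of $\Gamma$ colored by $V\in\ideal$, cut it, and observe that the resulting 1-1-ribbon graph $T$ with section $(V,+)$ admits both a left and a right planar closure isotopic to $\Gamma$ in $S^2$; the transition between the two closures is mediated by the identities of Lemma~\ref{lem-trace-on-ideal-are-ambi}(c), which are precisely the two-sided axioms relating left and right partial traces modulo the pivotal structure. Setting $F_\tzz(\Gamma)$ to be this common value, one then extends to $\ideal$-admissible spherical graphs by picking any cutting presentation with section $(V,+)$, $V\in\ideal$, and defining $F_\tzz(\Gamma)=\tzz_V(F(T))$; spherical isotopy invariance follows from the planar case together with $F_\tzz^l=F_\tzz^r$, since isotopies in $S^2$ that cross $\infty$ correspond exactly to exchanging a left with a right closure of a common cutting. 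The main obstacle throughout is step (iii): identifying carefully how a re-cutting along a different edge of $\Gamma$ corresponds to absorbing a partial trace into the new section. This requires a precise diagrammatic analysis in $\Graph_\cat$ and is exactly the place where the partial-trace axiom of a left trace is essentially used.
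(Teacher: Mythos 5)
Your plan identifies the right raw ingredients (cyclicity, the partial-trace axiom, biduality via the pivotal structure, and Lemma~\ref{lem-trace-on-ideal-are-ambi}(c) for the two-sided case), but the central claim of the one-sided argument — that any two left cutting presentations of $\Gamma$ are connected by a finite sequence of the moves (i)--(iii) — is exactly where the difficulty lies, and you leave it as a plan rather than a proof. Move (iii) as stated only absorbs a tensor factor from the section into the partial trace; it does not explain how a cut at one edge $e_0$ on the boundary of the unbounded region can be transported to a cut at a different edge $e_1$. In particular, the relation between the two cuts involves passing to a dual diagram, and this passage through the pivotal structure is not a composition of the moves you list.

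The paper's proof avoids any moves theorem. Given two left cutting presentations $T_0$, $T_1$ cutting edges $e_0$, $e_1$, it builds a single cutting path $\gamma = \bar\gamma_1\gamma_2\gamma_0$ through the unbounded component and sets $T=cut_\gamma(\Gamma)\in\End_{\Graph_\cat}\bigl((V_0,\ep_0),(V_1,-\ep_1)\bigr)$. Then $T_0=\cl_r^{(V_1,-\ep_1)}(T)$ while $T_1=\cl_r^{(V_0,-\ep_0)}(T^*)$ — crucially, $T_1$ is a closure of the \emph{dual} $T^*$. After normalizing $F(T)$ by the isomorphisms $\psi_{(V,\ep)}$ built from the pivotal structure (Equation~\eqref{E:F(T*)FT*}), both sides reduce to the two members of the left ambidextrous identity \eqref{lambi} from Lemma~\ref{lem-trace-on-ideal-are-ambi}(a), which is the decisive tool that you do not invoke in the one-sided case (you only cite part (c) for the two-sided case). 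That identity is itself derived in the paper from cyclicity and the partial-trace axiom, so the ingredients you name do appear, but they are combined in a single closed computation rather than via an unestablished sequence of re-cutting moves. The two-sided/spherical case then follows by the same scheme with $\gamma_2$ allowed to cross edges of $\Gamma$, producing a middle factor $Y$ and requiring \eqref{ambi1} in place of \eqref{lambi}. If you want to salvage the moves approach, you would need to actually state and prove a connectivity result for cutting presentations, and add a move that realizes the passage $T\leftrightarrow T^*$; at that point you would essentially be re-deriving Lemma~\ref{lem-trace-on-ideal-are-ambi}(a).
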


When $\ideal=\cat$ and $\tzz$ is the usual left (resp.\@ right) trace of endomorphisms of $\cat$, then   $F_\tzz^l$ (resp.\@ $F_\tzz^r$) is nothing but the usual invariant obtained by Penrose calculus.

Before proving Theorem~\ref{thm-inv-graph}, we introduce some notation.  Remark first that if
$T\colon ((V_1,\ep_1),\ldots,(V_m,\ep_m)) \to ((V_1',\ep'_1),\ldots,(V'_n,\ep'_n))$ is a morphism in $\Graph_\cat$, then
\begin{equation}\label{E:F(T*)FT*}
F(T^{*})=(\psi_{(V_m,\ep_m)}^{-1}\otimes\cdots\otimes\psi_{(V_1,\ep_1)}^{-1})\circ
F(T)^{*}\circ (\psi_{(V'_n,\ep'_n)}\otimes\cdots\otimes\psi_{(V'_1,\ep'_1)})
\end{equation}
where
$\psi_{(V,\ep)}\colon V^{-\ep}\to (V^\ep)^*$ is the isomorphism given by
\begin{equation*}
  \psi_{(V,-1)}=\phi_V\colon V\to V^{**}\text{ and }\psi_{(V,1)}=\Id_{V^{*}}\colon V^{*}\to V^{*}.
\end{equation*}
Let $T$ be an endomorphism of $((V_1,\ep_1),\ldots,(V_m,\ep_m))$ in $\Graph_\cat$. By taking the left closure of the $k$ left most strands of $T$, we define the partial left closure of $T$ denoted by
$$\cl_l^{((V_1,\ep_1),...,(V_{k},\ep_{k}))}(T)\in
\End_{\Graph_\cat}((V_{k+1},\ep_{k+1}),\ldots,(V_{n},\ep_{n})).$$
Similarly,
by taking the right closure of the $k$ right most strands of $T$, we define
the partial right closure of $T$ denoted by
$$\cl_r^{((V_{n-k+1},\ep_{n-k+1}),...,(V_n,\ep_n))}(T)\in
\End_{\Graph_\cat}((V_1,\ep_1),\ldots,(V_{n-k},\ep_{n-k})).$$

Let $\Gamma$ be a spherical $\cat$-colored ribbon graph in $S^2=\R^2\cup\{\infty\}$. By a \emph{cutting path} of
$\Gamma$ we mean an embedded oriented path
$\gamma\colon [0,1]\to S^2$ such that $\gamma(0),\gamma(1)\notin\Gamma$ and whose image does not meet the coupons of $\Gamma$ and
meets any edge of $\Gamma$ transversally.  Let $\gamma$ be a cutting path of $\Gamma$. Consider a tubular  neighborhood
$\Omega\simeq\gamma([0,1])\times]-1,1[$ of the image $\gamma([0,1])$ of $\gamma$. We identify $S^2\setminus\Omega\simeq\R\times[0,1]$ in such a way that
the boundary components $\gamma(]0,1[)\times \{-1\}$ and $\gamma(]0,1[)\times \{1\}$ of $S^2\setminus\Omega$ are sent to  the top boundary $\R\times\{1\}$ and the bottom boundary
$\R\times\{0\}$ of $\R\times[0,1]$,  respectively.
Then we denote by  $cut_\gamma(\Gamma)$ the endomorphism of $\Graph_\cat$ equal to
$\Gamma\setminus \Omega\subset
S^2\setminus\Omega\simeq\R\times[0,1]$.

If $\Gamma$ is a planar $\cat$-colored ribbon graph, considered as a spherical $\cat$-colored graph in $S^2=\R^2\cup\{\infty\}$, and $\gamma$ is a cutting path for $\Gamma$ such that the points $\gamma(1)$ (resp.\@ $\gamma(0)$) and $\infty$
are in the same component of $S^2\setminus\Gamma$,  then $\cl_l(cut_\gamma(\Gamma))$ (resp.\@
$\cl_r(cut_\gamma(\Gamma))$) and $\Gamma$ are isotopic.

\begin{proof}[Proof of Theorem  \ref{thm-inv-graph}]
Let us prove the first statement of 
the theorem.  
Let $\tzz$ be a left (resp.\@ right) trace on a left (resp.\@ right) ideal $\ideal$.  Let $\Gamma$ be an $\ideal$-admissible planar $\cat$-colored
  ribbon graph and $T_0$, $T_1$ be two left cutting presentations of $\Gamma$
  with sections $(V_0,\ep_0)$ and $(V_1,\ep_1)$, respectively. We have to show
  that $\tzz_{V^{\varepsilon_0}}(F(T_0))=\tzz_{V_1^{\varepsilon_1}}(F(T_1))$.
  The two edges $e_0$ and $e_1$ of $\Gamma$ that are cut to form $T_0$ and
  $T_1$, respectively, are in the boundary of the unbounded component $C$ of
  $\R^2\setminus\Gamma$.  Let $\gamma_0$ and $\gamma_1$ be two disjoint
  cutting paths located in a neighborhoods of $e_0$ and $e_1$, respectively,
  such that $T_i=cut_{\gamma_i}(\Gamma)$ and $\gamma_i(1)\in C$ for $i=0,1$.
  Choose an embedded path $\gamma_2\colon [0,1]\to C\setminus\bigl
  (\gamma_0(]0,1[)\cup\gamma_1(]0,1[)\bigr)$ such that
  $\gamma_2(0)=\gamma_0(1)$ and $\gamma_2(1)=\gamma_1(1)$.  Define $\gamma$ as
  the concatenation of paths $\gamma=\bar\gamma_1\gamma_2\gamma_0$ where
  $\bar\gamma_1(t)=\gamma_1(1-t)$. Set
  $$T=cut_\gamma(\Gamma)\in\End_{\Graph_\cat}((V_0,\ep_0),(V_1,-\ep_1)).$$
  When $\ep_0=\ep_1=1$, the construction of $T$ can be schematically
  depicted~as:
  $$
  \psfrag{k}[Bc][Bc]{\scalebox{.9}{$\gamma_1$}}
  \psfrag{e}[Bl][Bl]{\scalebox{.9}{$e_0$}}
  \psfrag{c}[Br][Br]{\scalebox{.9}{$e_1$}}
  \psfrag{b}[Bc][Bc]{\scalebox{.9}{$\gamma_2$}}
  \psfrag{l}[Bc][Bc]{\scalebox{.9}{$\gamma_0$}}
  \psfrag{T}[Bc][Bc]{\scalebox{.9}{$T$}}
  \rsdraw{.45}{.9}{fig14b}\,.
  $$
By construction, we have: $T_0=\cl_r^{(V_1,-\ep_1)}(T)$ and $T_1=\cl_r^{(V_0,-\ep_0)}(T^{*})$. Then, setting $$f=(\Id_{V_0^{\ep_0}}\otimes\psi_{(V_1,\ep_1)})  F(T)  (\Id_{V_0^{\ep_0}}\otimes
  \psi_{(V_1,\ep_1)}^{-1}) \in\End_\cat(V_0^{\ep_0}\otimes (V_1^{\ep_1})^{*}),$$ we have:
\begin{align*}
\tzz_{V_0^{\ep_0}}(F(T_0))&=
  \tzz_{V_0^{\ep_0}}\bigl(F(\cl_r^{(V_1,-\ep_1)}(T))\bigr)\\
  &=\tzz_{V_0^{\ep_0}}\bigl(\tr_r^{V_1^{-\ep_1}}\!(F(T))\bigr)\\
  & =\tzz_{V_0^{\ep_0}}\bigl(\tr_r^{V_1^{-\ep_1}}\!\bigl(
    (\Id_{V_0^{\ep_0}}\otimes\psi_{(V_1,\ep_1)}^{-1})
    f(\Id_{V_0^{\ep_0}}\otimes\psi_{(V_1,\ep_1)})\bigr)
  \bigr)\\
  &=\tzz_{V_0^{\ep_0}}\bigl(\tr_r^{(V_1^{\ep_1})^*}(f)\bigr)
\end{align*}
and
\begin{align*}
\tzz_{V_1^{\ep_1}}(F(T_1))&=
  \tzz_{V_1^{\ep_1}}\bigl(F(\cl_r^{(V_0,-\ep_0)}(T^{*}))\bigr)
  =\tzz_{V_1^{\ep_1}}\bigl(\tr_r^{V_0^{-\ep_0}}\!(F(T^{*}))\bigr).
\end{align*}
Now
\begin{align*}
F(T^{*}) &= (\psi_{(V_1,{-\ep_1})}^{-1}\otimes
  \psi_{(V_0,{\ep_0})}^{-1}) F(T)^{*}
  (\psi_{(V_1,{-\ep_1})}\otimes \psi_{(V_0,{\ep_0})})\\
  &=(\phi_{V_1^{\ep_1}}^{-1}\otimes \psi_{(V_0,{\ep_0})}^{-1})
  f^{*} (\phi_{V_1^{\ep_1}}\otimes \psi_{(V_0,{\ep_0})})
\end{align*}
 by using \eqref{E:F(T*)FT*} and the fact that $\psi_{(V,\ep)}^{*}=\psi_{(V^{*},\ep)}^{-1}$. Therefore
  \begin{align*}
  \tr_r^{V_0^{-\ep_0}}\!(F(T^{*})) &= \phi_{V_1^{\ep_1}}^{-1}
  \tr_r^{V_0^{-\ep_0}}\!\bigl((\Id\otimes \psi_{V_0,{\ep_0}}^{-1})
  f^{*} (\Id\otimes \psi_{V_0,{\ep_0}})\bigr)
  \phi_{V_1^{\ep_1}}\\
  &= \phi_{V_1^{\ep_1}}^{-1} \tr_r^{(V_0^{\ep_0})^*}\!(f^{*})
  \phi_{V_1^{\ep_1}}\\
  &=\phi_{V_1^{\ep_1}}^{-1}\bigl(
  \tr_l^{V_0^{\ep_0}}\!(f)\bigr)^{*} \phi_{V_1^{\ep_1}}
  \end{align*}
and so
\begin{align*}
\tzz_{V_1^{\ep_1}}(F(T_1))&=\tzz_{V_1^{\ep_1}}\bigl(\tr_r^{V_0^{-\ep_0}}\!(F(T^{*}))\bigr)=\tzz_{V_1^{\ep_1}} \bigl(\phi_{V_1^{\ep_1}}^{-1}\bigl(
  \tr_l^{V_0^{\ep_0}}\!(f)\bigr)^{*} \phi_{V_1^{\ep_1}}\bigr).
\end{align*}
Finally, 
by Lemma~\ref{lem-trace-on-ideal-are-ambi}(a), we have 
  $$\tzz_{V_1^{\ep_1}}(F(T_1))=\tzz_{V_1^{\ep_1}} \bigl(\phi_{V_1^{\ep_1}}^{-1}\bigl(
  \tr_l^{V_0^{\ep_0}}\!(f)\bigr)^{*} \phi_{V_1^{\ep_1}}\bigr)
  = \tzz_{V_0^{\ep_0}}\bigl(\tr_r^{(V_1^{\ep_1})^*}(f)\bigr) =\tzz_{V_0^{\ep_0}}(F(T_0)).$$
Thus, $F_\tzz^l$ is an isotopy invariant of left  $\ideal$-admissible graphs.  Then using
  $\cat^\rev$ it follows that $F_\tzz^r$ is an isotopy invariant of right
    $\ideal$-admissible graphs when $\tzz$ is a left trace.
This concludes the proof of the first statement in Theorem \ref{thm-inv-graph}.

We now prove the second statement of 
the theorem.  Let $\tzz$ be a trace on an ideal~$\ideal$.
  Let $\Gamma$ be an $\ideal$-admissible spherical $\cat$-colored ribbon graph
  and $T_0$, $T_1$ be two cutting presentations of $\Gamma$ with sections
  $(V,\ep)$ and $(V',\ep')$, respectively. We have to show that
  $\tzz_{{V'}^{\varepsilon'}}(F(T_1))=\tzz_{V^{\varepsilon}}(F(T_0))$.
  Let $e_0$ and $e_1$ be the edges of $\Gamma$ that are cut to form $T_0$ and
  $T_1$, respectively. Let $\gamma_0$ and $\gamma_1$ be two disjoint cutting
  paths located in a neighborhoods of $e_0$ and $e_1$, respectively, such that
  $T_i=cut_{\gamma_i}(\Gamma)$ for $i=0,1$.
  Choose an embedded path $\gamma_2\colon [0,1]\to
  S^2\setminus(\gamma_0([0,1[)\cup\gamma_1([0,1[))$ such that
  $\gamma_2(0)=\gamma_0(1)$, $\gamma_2(1)=\gamma_1(1)$, and whose image does
  not meet the coupons of $\Gamma$ and meets any edge of $\Gamma$
  transversally. Define $\gamma$ as the concatenation of paths
  $\gamma=\bar\gamma_1\gamma_2\gamma_0$, where
  $\bar\gamma_1(t)=\gamma_1(1-t)$.  Set
    $$T=cut_\gamma(\Gamma)\in\End_{\Graph_\cat}((V_0,\ep_0),(V_1,\ep_1),\ldots,(V_n,\ep_n))$$
where $(V_0,\ep_0)=(V,\ep)$, $(V_n,\ep_n)=(V',\ep')$, and $V_1, \dots,V_n$ are the colors of the edges met by $\gamma_2$.
By construction,
$$T_0=\cl_r^{((V_1,\ep_1),\ldots,(V_n,\ep_n))}(T) \;\text{ and } \; T_1=\cl_r^{((V_{n-1},-\ep_{n-1}),\ldots,(V_0,-\ep_0))}(T^{*}).$$
Let $Y=V_1^{\ep_1}\otimes\cdots\otimes V_{n-1}^{\ep_{n-1}}$ and set $$f=(\Id_{V_0^{\ep_0}\otimes Y}\otimes\psi_{V_n,\ep_n}) F(T) (\Id_{V_0^{\ep_0}\otimes Y}\otimes
  \psi_{V_n,\ep_n}^{-1}) \in\End_\cat(V_0^{\ep_0}\otimes Y \otimes (V_n^{\ep_n})^{*}).$$  As in the first part of the theorem, one can show that
$$
\tzz_{V_0^{\ep_0}}(F(T_0))=\tzz_{V_0^{\ep_0}}\bigl(\tr_r^{Y\otimes (V_n^{\ep_n})^*}\!(f)\bigr)
$$
and
$$ \tzz_{V_n^{\ep_n}}(F(T_1))=\tzz_{V_n^{\ep_n}}\bigl(\phi_{V_n^{\ep_n}}^{-1}\bigl(\tr_l^{V_0^{\ep_0}\otimes Y}\!(f)\bigr)^{*}\phi_{V_n^{\ep_n}}\bigr).
$$
Thus, the second statement of theorem follows from these formulas and Equation \eqref{ambi1}.  
\end{proof}

\subsection{Invariants of spherical graphs from one-sided traces}\label{s:inv-G2}
Let $\cat$ be a pivotal \kt category.
By a \emph{$\A$-colored graph}, where $\A$ is a class of object of $\cat$, we mean a
$\cat$-colored graph whose edges are colored by elements of $\A$.

 Let $\tzz$ be a left (resp.\@ right) trace on a left (resp.\@ right)
    ideal $\ideal$.  Set
   \begin{equation}\label{E:DefA}
    \A=\{V\in\ideal\cap\ideal^{*}\,|\, \tzz_V=\tzz{\vee}_{V}\},
  \end{equation}
where $\tzz{\vee}$ is defined in Lemma~\ref{lem-dual-trace}(c).
    For any spherical $\A$-colored ribbon
    graph $\Gamma$, set
    $$
    F_\tzz^{a}(\Gamma)=\tzz_{V^\varepsilon}(F(T)) ,
    $$
    where $T$ is any cutting presentation of $\Gamma$ with section
    $(V,\varepsilon)$.

\begin{theorem}\label{thm-inv-graph2}
$F_\tzz^a$ is an isotopy invariant of spherical $\A$-colored ribbon
    graphs.
\end{theorem}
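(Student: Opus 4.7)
The plan is to reduce the general case to the single-strand ambidextrous identity of Lemma~\ref{lem-trace-on-ideal-are-ambi}(a) by an induction on the number of edges of $\Gamma$ crossed by a connecting path between the two cutting paths. I treat the case when $\tzz$ is a left trace on a left ideal $\ideal$; the right case then follows by passing to $\cat^\rev$.

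Given two cutting presentations $T_0$, $T_1$ of the $\A$-colored spherical graph $\Gamma$, with sections $(V_0,\ep_0)$ and $(V_n,\ep_n)$ (both in $\A$), I first mimic the setup used in the proof of Theorem~\ref{thm-inv-graph}: choose cutting paths $\gamma_0$, $\gamma_1$ and a path $\gamma_2\colon [0,1]\to S^2$ from $\gamma_0(1)$ to $\gamma_1(1)$ that avoids the coupons of $\Gamma$ and meets its edges transversally. Let $V_1,\dots,V_{n-1}\in\A$ be the colors of the $n-1$ edges of $\Gamma$ crossed by $\gamma_2$; they all lie in $\A$ because $\Gamma$ is $\A$-colored. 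I then induct on $n-1$. In the base case $n-1=0$, the path $\gamma_2$ stays inside a single face of $\Gamma$, and the argument used in the first part of the proof of Theorem~\ref{thm-inv-graph} applies verbatim, reducing the desired equality $\tzz_{V_0^{\ep_0}}(F(T_0))=\tzz_{V_n^{\ep_n}}(F(T_1))$ to identity~\eqref{lambi} of Lemma~\ref{lem-trace-on-ideal-are-ambi}(a). The latter applies with $X=V_n^{\ep_n}$ and $X'=V_0^{\ep_0}$, since $V_0,V_n\in\A\subset\ideal\cap\ideal^*$ guarantees $X,X'\in\ideal$.

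For the inductive step $n-1\geq 1$, I pick any index $1\leq i\leq n-1$ and build a third cutting presentation $T_i$ of $\Gamma$ along a short transverse arc located in a small neighborhood of the $i$th edge $e^{(i)}$ crossed by $\gamma_2$. Its section is $(V_i,\ep_i)$ with $V_i\in\A$, so $T_i$ is a legitimate cutting presentation of the $\A$-colored graph $\Gamma$. The initial segment of $\gamma_2$, stopped just before $e^{(i)}$, yields a connecting path from $T_0$ to $T_i$ crossing only the $i-1$ edges $e^{(1)},\dots,e^{(i-1)}$, while the final segment of $\gamma_2$, starting just past $e^{(i)}$, yields a connecting path from $T_i$ to $T_1$ crossing only the $n-1-i$ edges $e^{(i+1)},\dots,e^{(n-1)}$. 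Both counts are strictly less than $n-1$, so the inductive hypothesis applies twice and yields
$$\tzz_{V_0^{\ep_0}}(F(T_0))=\tzz_{V_i^{\ep_i}}(F(T_i))=\tzz_{V_n^{\ep_n}}(F(T_1)),$$
completing the induction. The main obstacle is the unavailability of the multi-strand ambidextrous identity~\eqref{ambi1}, which in the proof of Theorem~\ref{thm-inv-graph} handles arbitrary middle strands $Y$ in a single step but rests on $\tzz$ being a two-sided trace. The $\A$-coloring hypothesis is precisely what bypasses this obstruction: every intermediate edge along a connecting path is colored by an object of $\A$, hence can serve as the section of an intermediate cutting presentation, and the problem is thereby split into a sequence of single-crossing steps each handled by~\eqref{lambi}.
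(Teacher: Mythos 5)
Your inductive decomposition is a natural idea, but there is a genuine gap: the defining property $\tzz_V=\tzz^{\vee}_V$ of the class $\A$ is never used. You only invoke $\A$-membership ("$V_i\in\A$, so $T_i$ is a legitimate cutting presentation"), not the equality $\tzz_{V_i}=\tzz^{\vee}_{V_i}$. But as written, your argument would apply unchanged to any $(\ideal\cap\ideal^{*})$-colored spherical graph, and that stronger claim is false. Already for the unknot colored by $V\in\ideal\cap\ideal^{*}$ with $\qd(V)\neq\qd(V^{*})$, the two cutting presentations have sections $(V,+)$ and $(V,-)$, giving $\qd(V)$ and $\qd(V^{*})$ respectively.

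The missing step is in the inductive passage. When you form the intermediate cutting $T_i$ along a short transverse arc $\delta_i$ through $e^{(i)}$, the initial segment of $\gamma_2$ (ending just before $e^{(i)}$) terminates in the face on one side of $e^{(i)}$, while the final segment (starting just past $e^{(i)}$) begins in the face on the other side. The setup of Theorem~\ref{thm-inv-graph} requires the connecting path to run between the corresponding $\gamma(1)$ endpoints, so the two applications of the inductive hypothesis actually involve the cut arc $\delta_i$ with its two opposite orientations; these give the two cutting presentations $T_i$ and $T_i^{*}$, with sections $(V_i,\ep_i)$ and $(V_i,-\ep_i)$. Chaining the two hypotheses therefore requires the additional identity $\tzz_{V_i^{\ep_i}}(F(T_i))=\tzz_{V_i^{-\ep_i}}(F(T_i^{*}))$, and after unwinding via \eqref{defltrace} and \eqref{E:F(T*)FT*} this is precisely $\tzz_{V_i^{\ep_i}}=\tzz^{\vee}_{V_i^{\ep_i}}$. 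That is exactly what $\A$ guarantees, and it is exactly where the paper's own proof invokes it: rather than your induction, the paper fixes the planar representative $\Gamma'=\cl_l(T)$, appeals to the first part of Theorem~\ref{thm-inv-graph} for invariance in the planar picture, and then treats the single move "push $\infty$ across an edge" by the chain $\tzz_{V^\ep}(F(T))=\tzz^{\vee}_{V^\ep}(F(T))=\tzz_{V^{-\ep}}(F(T^{*}))$. If you insert this computation at each inductive step, your proof goes through; as it stands, it is incomplete.
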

Note that if $\tzz$ is a trace on an ideal $\ideal$, then $\A=\ideal$ (by Lemmas~\ref{lem-dual-ideal} and~\ref{lem-dual-trace}), and the invariants $F_\tzz$ and $F_\tzz^{a}$ coincide on  spherical $\ideal$-colored ribbon graphs.

\begin{proof}
We prove Theorem  \ref{thm-inv-graph2} in the case when $\tzz$ is a left trace.  Then the case when $\tzz$ is a right trace can be deduced using $\cat^\rev$.  Let $\Gamma$
  be a spherical $\A$-colored ribbon graph and let $T$ be a cutting presentation of $\Gamma$.
  Then $\Gamma'=\cl_l(T)$ is a planar $\A$-colored  ribbon graph whose isotopy class
  in $S^2=\R^2\cup\{\infty\}$ is the same as the isotopy class of $\Gamma$.
  By Theorem \ref{thm-inv-graph},
$F^l_\tzz(\Gamma')$ does not depend on the left cutting
   presentations of $\Gamma'$.
     Let $e$ be the edge of $\Gamma'$ that is cut to form $T$.  Let $C$ and~$C'$
  be the two connected components of $S^2\setminus\Gamma$ located on the two
  sides of $e$, where~$C$ is the distinguished component containing $\infty$.
Then $\Gamma'=\cl_l(T)$ and
  $\Gamma''=\cl_r(T)=\cl_l(T^{*})$ are isotopic in $S^2$.  However $C'$ is the
  distinguished component of $\Gamma''$ containing $\infty$.
  Hence, $\Gamma''$ is obtained from $\Gamma'$ by a move that consist in
  pushing the point $\infty$ across an edge.  The class of $\Gamma'$ modulo
  these moves clearly depends only of $\Gamma$ and thus it is enough to show
  that $F^l_\tzz(\Gamma')$ is invariant by this move.  This is true because
  $$F^l_t(\Gamma')
  =\tzz_{V^\ep}(F(T))=\tzz{\vee}_{V^{\ep}}(F(T))
   =\tzz_{(V^{\ep})^*}(F(T)^*)
  =\tzz_{V^{-\ep}}(F(T^*))
  =F^l_t(\Gamma'')$$
  where the second equality is due to
  $\tzz{\vee}=\tzz$ on $\A$ and the fourth equality follows from Equations \eqref{defltrace} and \eqref{E:F(T*)FT*}.
\end{proof}

The invariant $F_t^a$ of Theorem~\ref{thm-inv-graph2} is a generalization of the analogous invariant defined from ribbon categories in \cite{GPT1}.
Moreover,   $F_\tzz^a$ produces the data of a trivalent-ambidextrous
pair as required in \cite{GPT2}.
More precisely, let $\B$ be
a class of simple objects of $\cat$ such that $V^*\in\B$ for all $V \in\B$. Denote by $\tet_{\B}$ the class of
connected trivalent spherical $\B$-colored ribbon graphs (a ribbon graph is
\emph{trivalent} if all its coupons are adjacent to 3 half-edges). Let $\qd\colon \B\to
\kk$ be a map such that, for all $V,V' \in \B$,
  \begin{enumerate}
    \renewcommand{\labelenumi}{{\rm (\roman{enumi})}}
  \item  $\qd(V)=\qd(V^*)$,
  \item $\qd(V)=\qd(V')$ if $V$ is isomorphic to
$V'$.
  \end{enumerate}
  For any 1-1-ribbon graph $Q$ with section $(V,\varepsilon)$
  where $V$ is simple, we let $\langle Q\rangle =\langle
  F(Q)\rangle_{V^\varepsilon} \in \kk$, that is, $F(Q)= \langle Q\rangle \,
  \Id_{V^\varepsilon}$ (see Section~\ref{sphesphe}).  Using
  this notation, the pair $(\B,\qd)$ is \emph{trivalent-ambidextrous} if for
  any $\Gamma\in \tet_{\B}$ and for any two cutting presentations $T, T'$ of
  $\Gamma$ with sections $(V,\varepsilon)$ and $(V',\varepsilon')$,
  we have:
  $$\qd(V)\ang{T} \, =\qd(V')\ang{T'}.$$
For a
trivalent-ambidextrous pair $(\B,\qd)$, we define a function $G_{(\B,d)}\colon
\tet_{\B} \rightarrow \kk$ by
\begin{equation*}
  G_{(\B,d)}(\Gamma)={\qd}(V)\langle T\rangle
\end{equation*}
where $T$ is any cutting presentation of $\Gamma$ with section
$(V,\varepsilon)$ with $V\in \B$.  The definition of a trivalent-ambidextrous
pair implies that $G_{(\B,d)}$ is well-defined.

Let us explain how to  produce a trivalent-ambidextrous
pairs from traces on an ideals and how the invariants derived from such data are related. Let $\tzz$ be a left (resp.\@ right) trace on a left (resp.\@ right) ideal $\ideal$ of $\cat$ and $\A$ as above. Denote by $\qd$ the left (resp.\@ right) modified dimension on $\ideal$ associated with $\tzz$. Set
 $$
  \B=\{V\in\ideal\cap\ideal^{*}\,|\, V\text{ is simple and
  }\qd(V)=\qd(V^{*})\}.
  $$
\begin{corollary}
The pair $(\B,\qd)$ is
  trivalent-ambidextrous, $\B \subset \A$ (and so any $\Gamma \in \tet_{\B}$ is $\A$-colored), and $G_{(\B,\qd)}(\Gamma)=F_\tzz^a(\Gamma)$ for all $\Gamma\in \tet_{\B}$.
\end{corollary}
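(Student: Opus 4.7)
The plan is to verify three things in sequence: that $\B$ meets the requirements for $(\B,\qd)$ to form a trivalent-ambidextrous pair, that $\B\subset\A$ so Theorem~\ref{thm-inv-graph2} applies to graphs in $\tet_\B$, and that the resulting formulas for $G_{(\B,\qd)}$ and $F_\tzz^a$ literally coincide on $\tet_\B$. The crux is that every object of $\B$ is simple, so endomorphisms reduce to scalars, trivializing the comparison between $\tzz$ and $\tzz^\vee$ on $\B$.

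First I would establish that $V\in\B$ implies $V^*\in\B$, and that conditions (i)--(ii) required of $\qd$ hold. By Lemmas~\ref{lem-dual-ideal} and~\ref{L:id*}, using $V^{**}\simeq V$ and repleteness, the class $\ideal\cap\ideal^*$ is closed under $(\cdot)^*$. Simplicity is preserved by duality in a pivotal \kt category since the dual functor induces a \kt linear isomorphism $\End_\cat(V^*)\cong\End_\cat(V)$ via the pivotal structure. Modified dimensions agree on isomorphic objects (a direct consequence of the symmetry of a one-sided trace), so $\qd((V^*)^*)=\qd(V^{**})=\qd(V)=\qd(V^*)$, the last equality being the defining condition of $\B$; hence $V^*\in\B$. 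Conditions (i) and (ii) on $\qd$ are then immediate, the former by definition of $\B$ and the latter by the isomorphism invariance just noted.

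Second, for $V\in\B$ and $f\in\End_\cat(V)$, simplicity yields $f=\langle f\rangle_V\,\Id_V$, so the \kt linearity of duality and the identity $\Id_V^*=\Id_{V^*}$ give $f^*=\langle f\rangle_V\,\Id_{V^*}$. Then
\[
\tzz_V(f)=\langle f\rangle_V\,\qd(V)\quad\text{and}\quad \tzz_V^\vee(f)=\tzz_{V^*}(f^*)=\langle f\rangle_V\,\qd(V^*),
\]
which coincide since $\qd(V)=\qd(V^*)$ by definition of $\B$. Thus $V\in\A$, so $\B\subset\A$; in particular every $\Gamma\in\tet_\B$ is $\A$-colored and $F_\tzz^a(\Gamma)$ is well defined by Theorem~\ref{thm-inv-graph2}.

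Third, for any cutting presentation $T$ of $\Gamma\in\tet_\B$ with section $(V,\varepsilon)$, the object $V^\varepsilon\in\B$ is simple by the first step, so $F(T)=\langle T\rangle\,\Id_{V^\varepsilon}$ and
\[
F_\tzz^a(\Gamma)=\tzz_{V^\varepsilon}(F(T))=\langle T\rangle\,\qd(V^\varepsilon)=\qd(V)\,\langle T\rangle,
\]
the last equality using $\qd(V^*)=\qd(V)$ when $\varepsilon=-1$. Applied to two cutting presentations $T,T'$ with sections $(V,\varepsilon),(V',\varepsilon')$, the independence of $F_\tzz^a$ from the choice of presentation (Theorem~\ref{thm-inv-graph2}) yields $\qd(V)\langle T\rangle=\qd(V')\langle T'\rangle$, which is the trivalent-ambidextrous condition; at the same time the displayed formula matches the definition of $G_{(\B,\qd)}$, giving $G_{(\B,\qd)}(\Gamma)=F_\tzz^a(\Gamma)$. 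The only delicate point is the scalar reduction in the second step, after which the rest of the statement is a bookkeeping consequence of Theorem~\ref{thm-inv-graph2}.
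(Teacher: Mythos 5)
Your proof is correct and follows essentially the same route as the paper: the crux in both is that simplicity of $V\in\B$ reduces every endomorphism of $V$ to a scalar multiple of the identity, so the defining condition $\qd(V)=\qd(V^*)$ of $\B$ forces $\tzz_V=\tzz_V^\vee$ and hence $\B\subset\A$; then the formula $F_\tzz^a(\Gamma)=\qd(V)\langle T\rangle$ for any cutting presentation $T$ of $\Gamma\in\tet_\B$, together with the well-definedness of $F_\tzz^a$ from Theorem~\ref{thm-inv-graph2}, simultaneously gives both the trivalent-ambidextrous property and $G_{(\B,\qd)}=F_\tzz^a$. If anything, your writeup is tidier than the paper's: you explicitly verify that $\B$ is closed under $(\cdot)^*$ and that conditions (i)--(ii) on $\qd$ hold (the paper leaves these as implicit), and you derive the trivalent-ambidextrous condition cleanly from the invariance established in Theorem~\ref{thm-inv-graph2} rather than asserting it at the outset.
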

\begin{proof}
Clearly $(\A,\qd)$ is a
  trivalent-ambidextrous pair since isomorphic objects have equal modified dimension. Now let $f\in\End_\cat(V)$ where $V \in \A$. Since $V$ is simple, we have: $f=\langle f \rangle_V \id_V$ and $\langle f^* \rangle_{V^*}=\langle f \rangle_V$. Therefore $$\tzz_V(f)=\langle f \rangle_V \qd(V)=\langle f^* \rangle_{V^*}\qd(V^*)=\tzz_{V^*}(f^*)=\tzz\vee_V(f).$$
Finally, let $\Gamma\in\tet_{\B}$ and $T$ be any cutting presentation of $\Gamma$ with section
    $(V,\varepsilon)$. Since $V$ is simple, $F(T)= \langle T\rangle \, \Id_{V^\varepsilon}$.
    Then \begin{align*}
    F_\tzz^{a}(\Gamma)&=\tzz_{V^\varepsilon}(F(T))=\tzz_{V^\varepsilon}(\langle T\rangle \, \Id_{V^\varepsilon})\\
    &=\langle T\rangle \,\tzz_{V^\varepsilon}( \Id_{V^\varepsilon})
    = \langle T\rangle \,\qd(V^\varepsilon)=\langle T\rangle \,\qd(V) = G_{(\B,\qd)}(\Gamma).
   \end{align*}
Therefore, $(F_t^a)_{|\tet_{\B}}=G_{(\B,\qd)}$.
\end{proof}

\section{Traces and ideals from classes of objects} \label{S:AmbiTrace}
In this section we give a systematic approach to defining traces on ideals.   In particular, we introduce the notions of a one-sided ambidextrous trace on a class of objects and of a spherical trace on a class of objects.   Then we show that such traces are in one to one correspondence with the traces of Section \ref{sect-ideaux}.

\subsection{Ideals generated by a class of objects}
Let $\cat$ be a monoidal category. Given a class $\oo \subset \cat$, set
\begin{align*}
& \ideal^l_\oo=\bigl\{ U \in \cat \, \big | \, \text{$U$ is a retract of $Y \otimes X$ for some $X \in \oo$ and $Y\in\cat$}  \bigr \},\\
& \ideal^r_\oo=\bigl\{ U \in \cat \, \big | \, \text{$U$ is a retract of $X \otimes Z$ for some $X \in \oo$ and $Z\in\cat$}  \bigr \},\\
& \ideal_\oo=\bigl\{ U \in \cat \, \big | \, \text{$U$ is a retract of $Y \otimes X \otimes Z$ for some $X \in \oo$ and $Y,Z\in\cat$}  \bigr \}.
\end{align*}
Then  $\ideal^l_\oo$  is the smallest left ideal of $\cat$ containing $\oo$,  $\ideal^r_\oo$  is the smallest right ideal of $\cat$ containing $\oo$, and $\ideal_\oo$  is the smallest ideal of $\cat$ containing $\oo$.

In the case where $\oo=\{V\}$ for $V \in \cat$, we denote $\ideal^l_{\{V\}}$, $\ideal^r_{\{V\}}$, and $\ideal_{\{V\}}$ by  $\ideal^l_V$, $\ideal^r_V$, and $\ideal_V$ respectively.

\begin{lemma}\label{P:ideal4} Let $\cat$ be a pivotal category, $\oo \subset \cat$, and  $U \in \cat$.
  \begin{enumerate}
\renewcommand{\labelenumi}{{\rm (\alph{enumi})}}
\item  $U \in \ideal^l_\oo$ if and only if there exist $X \in \oo$ and $f\in \End_\cat(U \otimes X^*)$ such
  that $\tr_r^{X^*}\!(f)=\Id_U$.
\item $U \in \ideal^r_\oo$ if and only if there exist $Y \in \oo$ and $f\in \End_\cat(Y^*\otimes U)$ such
  that $\tr_l^{Y^*}\!(f)=\Id_U$.
\item Let $\oo'=\{X^{*}\, | \, X\in\oo\}$.  Then
  $(\ideal^l_\oo)^{*}=\ideal^r_{\oo'}$, $(\ideal^r_\oo)^{*}=\ideal^l_{\oo'}$,
  and $\ideal_\oo=\ideal_{\oo'}$.
\end{enumerate}
\end{lemma}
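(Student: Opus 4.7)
The plan is to prove the three parts in sequence, with Part~(a) as the main content. Part~(b) will follow at once by passing to the reverse monoidal category $\cat^\rev$, and Part~(c) will reduce to the general closure properties of ideals established in Lemmas~\ref{lem-dual-ideal} and~\ref{L:id*}.

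For Part~(a), I would begin with the ``if'' direction, which is essentially a rewriting: from $\tr_r^{X^*}(f)=\Id_U$ one reads off the factorization
\begin{equation*}
\Id_U=(\Id_U\otimes\tev_{X^*})(f\otimes\Id_{X^{**}})(\Id_U\otimes\coev_{X^*}),
\end{equation*}
exhibiting $U$ as a retract of $(U\otimes X^*)\otimes X^{**}$, and the pivotal iso $\phi_X^{-1}$ upgrades this to a retract of $(U\otimes X^*)\otimes X$, so that $U\in\ideal^l_\oo$. For the ``only if'' direction, the natural idea is to use the duality of $X$: given $p\co Y\otimes X\to U$ and $q\co U\to Y\otimes X$ with $pq=\Id_U$, transport them via adjunction to
\begin{equation*}
\hat p=(p\otimes\Id_{X^*})(\Id_Y\otimes\coev_X)\co Y\to U\otimes X^*,\qquad
\hat q=(\Id_Y\otimes\tev_X)(q\otimes\Id_{X^*})\co U\otimes X^*\to Y,
\end{equation*}
and take $f=\hat p\hat q\in\End_\cat(U\otimes X^*)$; the claim will then be $\tr_r^{X^*}(f)=pq=\Id_U$.

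The main obstacle is this last verification: the ``closed $X^*$-loop'' appearing in the string diagram for $\tr_r^{X^*}(\hat p\hat q)$ could a priori introduce a stray scalar (such as $\dim_l(X)$), and one must see that the pivotal structure makes it cancel exactly. The cleanest route is to expand $\tr_r^{X^*}(\hat p\hat q)$, use functoriality to group the $\tev$'s and $\coev$'s away from $p$ and $q$, and then apply the two pivotal identities $(\tev_X\otimes\Id_{X^{**}})(\Id_X\otimes\coev_{X^*})=\phi_X$ (the very definition of the pivotal structure) and $(\Id_X\otimes\tev_{X^*})(\coev_X\otimes\Id_{X^{**}})=\phi_X^{-1}$ to factor the result as $\bigl(p\circ(\Id_Y\otimes\phi_X^{-1})\bigr)\circ\bigl((\Id_Y\otimes\phi_X)\circ q\bigr)$. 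The two powers of $\phi_X$ then cancel and yield $pq=\Id_U$.

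For Part~(c), Lemma~\ref{L:id*}(a) gives that $(\ideal^l_\oo)^*$ is a right ideal, and it contains $\oo'$ because $X^{**}\in\ideal^l_\oo$ for each $X\in\oo$ (Lemma~\ref{lem-dual-ideal}(a)); hence $\ideal^r_{\oo'}\subset(\ideal^l_\oo)^*$. Conversely, any $Y\in(\ideal^l_\oo)^*$ satisfies $Y^*\in\ideal^l_\oo$, so $Y^*$ is a retract of some $A\otimes X$ with $X\in\oo$, and dualizing exhibits $Y^{**}\simeq Y$ as a retract of $X^*\otimes A^*\in\ideal^r_{\oo'}$. The equality $(\ideal^r_\oo)^*=\ideal^l_{\oo'}$ is entirely symmetric. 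Finally, $\ideal_\oo$ is closed under duality by Lemma~\ref{lem-dual-ideal}(b) and hence contains $\oo^*=\oo'$, yielding $\ideal_{\oo'}\subset\ideal_\oo$; the reverse inclusion follows in the same way from $(\oo')^*=\{X^{**}\mid X\in\oo\}\subset\ideal_{\oo'}$ combined with repleteness and the pivotal iso $X\simeq X^{**}$.
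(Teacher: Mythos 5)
Your proposal follows essentially the same route as the paper. In Part~(a) the ``only if'' morphism $f=\hat p\hat q=(p\otimes\Id_{X^*})(\Id_Y\otimes\coev_X\tev_X)(q\otimes\Id_{X^*})$ is exactly the one used in the paper, and your ``if'' direction differs only cosmetically (you exhibit $U$ as a retract of $U\otimes X^*\otimes X^{**}$ and then invoke $X^{**}\simeq X$, whereas the paper writes the retraction data directly into $U\otimes X^*\otimes X$ using $\tcoev_X$ and $\ev_X$). Parts~(b) and (c) match the paper's deductions from $\cat^\rev$ and from Lemmas~\ref{lem-dual-ideal} and \ref{L:id*}, with Part~(c) spelled out a bit more than the paper's brief reference.

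One small inaccuracy in the framing of the verification $\tr_r^{X^*}(\hat p\hat q)=pq$: there is in fact no closed $X^*$-loop in the string diagram, and hence no danger of a stray scalar such as $\dim_l(X)$. What one has is a single zigzagging $X$-strand joining the $X$-output of $q$ to the $X$-input of $p$, passing through two caps and two cups. Your actual computation correctly identifies this zigzag as the composite $\phi_X^{-1}\circ\phi_X=\Id_X$, so the conclusion and the mechanism (the two pivotal snake identities) are right; only the heuristic about a potential circle is off. Your explicit factorization as $\bigl(p\circ(\Id_Y\otimes\phi_X^{-1})\bigr)\circ\bigl((\Id_Y\otimes\phi_X)\circ q\bigr)$ is a welcome level of detail that the paper omits, and it makes the isotopy-invariance argument fully rigorous without appealing to the graphical calculus.
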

\begin{proof}
Let us prove Part (a). Assume $U \in \ideal^l_\oo$, that is,  there exist $X \in \oo$, $Y\in\cat$, $p\colon Y \otimes X \to U$, and $q \colon U \to Y \otimes X$ such that $pq=\Id_U$. Set
$$
f=(p \otimes \Id_{X^*})(\Id_Y \otimes \tcoev_X \ev_X)(q \otimes \Id_{X^*}) \colon U \otimes X^* \to U \otimes X^*.
$$
Then $\tr_r^{X^*}\!(f)=pq=\Id_U$. Conversely, assume that there exist $X \in \oo$ and $f\in \End_\cat(U \otimes X^*)$ such
that $\tr_r^{X^*}\!(f)=\Id_U$. Note that $U \otimes X^* \otimes X \in \ideal^l_\oo$ and set
\begin{align*}
&p=\Id_U \otimes \ev_X \colon U \otimes X^* \otimes X \to U,\\
&q=(f \otimes \Id_X)(\Id_U \otimes \tcoev_X) \colon   U \to U \otimes X^* \otimes X.
\end{align*}
Then $pq=\tr_r^{X^*}\!(f)=\Id_U$.

One deduces Part (b) from Part (a) using $\cat^\rev$. The first two identities of Part~(c) follow from the observations in the proof of Lemma \ref{L:id*}.  Finally  $\ideal_\oo=(\ideal_\oo)^{*}=\ideal_{\oo'}$ by Lemma \ref{L:id*}.
\end{proof}

\subsection{Ambidextrous traces on a class of objects}
Let $\cat$ be a pivotal \kt category and $\oo \subset \cat$.  Denote by $\phi=\{\phi_X\colon X\to X^{**}\}_{X \in \cat}$ the pivotal structure of $\cat$ (see Section~\ref{sect-picotal-cat}). Let
$t=\{t_X\colon \End_\cat(X)\rightarrow \kk\}_{X\in\oo}$
be a family of \kt linear forms.

We say that the family $t$ is a \emph{left ambidextrous trace on $\oo$} if \eqref{lambi} is satisfied for all $X,X' \in \oo$ and $f \in \End_\cat(X' \otimes X^*)$, that is,
\begin{equation*}
t_X\left (\, \psfrag{Y}[Bc][Bc]{\scalebox{.8}{$X'$}} \psfrag{X}[Bc][Bc]{\scalebox{.8}{$X$}} \psfrag{f}[Bc][Bc]{\scalebox{.9}{$f$}} \rsdraw{.45}{.9}{lambi1}  \, \right ) = t_{X'} \left (\, \psfrag{Y}[Bc][Bc]{\scalebox{.8}{$X'$}} \psfrag{X}[Bc][Bc]{\scalebox{.8}{$X$}} \psfrag{f}[Bc][Bc]{\scalebox{.9}{$f$}} \rsdraw{.45}{.9}{lambi2}  \right ).
\end{equation*}

We say that the family $t$ is a  \emph{right ambidextrous trace on $\oo$} if \eqref{rambi} is satisfied for all $X,X' \in \oo$ and $g \in \End_\cat(X^* \otimes X')$, that is, 
\begin{equation*}
t_X\left ( \, \psfrag{Y}[Bc][Bc]{\scalebox{.8}{$X'$}} \psfrag{X}[Bc][Bc]{\scalebox{.8}{$X$}} \psfrag{f}[Bc][Bc]{\scalebox{.9}{$g$}} \rsdraw{.45}{.9}{rambi1}  \, \right ) = t_{X'} \left ( \psfrag{Y}[Bc][Bc]{\scalebox{.8}{$X'$}} \psfrag{X}[Bc][Bc]{\scalebox{.8}{$X$}} \psfrag{f}[Bc][Bc]{\scalebox{.9}{$g$}} \rsdraw{.45}{.9}{rambi2} \,\right ).
\end{equation*}

For example, the left (resp.\@ right) trace of endomorphisms in $\cat$ (see
Section~\ref{sec-traces}) is a left (resp.\@ right)  ambidextrous trace on
$\Ob(\cat)$.

\begin{rem}
  If $\cat$ is a ribbon category and $V\in\cat$, we recover the definition of an
  ambidextrous trace on $\oo=\{V\}$ given in \cite{GKP1}. Indeed, in that case, the notions
  of left and right ambidextrous traces become equivalent: the $\kk$-linear
  map
  $$\gamma\colon \End_\cat(V\otimes V)\to\End_\cat(V^{*}\otimes
  V), \quad \text{defined by } g \mapsto
  \gamma(g)=\psfrag{V}[Bc][Bc]{\scalebox{.8}{$V$}}
  \psfrag{g}[Bc][Bc]{\scalebox{1}{$g$}} \rsdraw{.45}{.9}{ribambi}\;,
  $$
  is an isomorphism and, given a map $t_V \colon \End_\cat(V)\rightarrow \kk$,
  the morphism $f=\gamma(g)$ satisfies \eqref{rambi} (for $t_V$) if and only
  if $t_V(\tr_r^V(g))=t_V(\tr_l^V(g))$.
\end{rem}

By Lemma~\ref{lem-trace-on-ideal-are-ambi}, any left (resp.\@ right) trace on a left (resp.\@ right) ideal
$\ideal$ in $\cat$ is a left (resp.\@ right) ambidextrous trace on $\ideal$ (and in particular on any $\oo \subset \ideal$).
The following theorem states that one-sided ambidextrous traces on a class of objects bijectively correspond to one-sided  traces on the one-sided  ideal generated by the class.
\begin{theorem}\label{thm:sided-traces}
Let $\cat$ be a pivotal \kt category. If $t$ is a left (resp.\@ right) ambidextrous trace on a class $\oo$
  of objects of~$\cat$, then there exists a unique left (resp.\@ right) trace
  $\tzz$ on $\ideal_\oo^l$ (resp.\@ $\ideal_\oo^r$) such that
  $\tzz_{|\oo}=t$.
\end{theorem}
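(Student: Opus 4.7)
The plan is to prove the left version; the right version then follows by applying the left version to $\cat^\rev$.

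\emph{Uniqueness and the formula.} For any $U \in \ideal^l_\oo$ one can, by definition, pick $X \in \oo$, $Y \in \cat$, and morphisms $p\colon Y \otimes X \to U$, $q\colon U \to Y \otimes X$ with $pq = \Id_U$. If $\tzz$ is a left trace on $\ideal^l_\oo$ extending $t$, then for $h \in \End_\cat(U)$ the two defining left-trace axioms immediately force
\begin{equation*}
\tzz_U(h) = \tzz_U\bigl((hp)q\bigr) = \tzz_{Y \otimes X}\bigl(q(hp)\bigr) = \tzz_X\bigl(\tr_l^Y(qhp)\bigr) = t_X\bigl(\tr_l^Y(qhp)\bigr).
\end{equation*}
This gives uniqueness and pinpoints the only possible definition, namely $\tzz_U(h) := t_X(\tr_l^Y(qhp))$; specializing to $Y = \unit$ and $p = q = \Id_X$ also recovers $\tzz_X = t_X$ for $X \in \oo$. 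It remains to prove that this formula is independent of $(X, Y, p, q)$ and that the resulting family $\tzz$ satisfies the left-trace axioms.

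\emph{Key technical step.} The crux of the existence argument, and the main obstacle, is to establish the following generalized cyclicity:
\begin{equation*}
(*)\qquad t_{X_1}\bigl(\tr_l^{Y_1}(\eta\xi)\bigr) = t_{X_2}\bigl(\tr_l^{Y_2}(\xi\eta)\bigr)
\end{equation*}
for all $X_1, X_2 \in \oo$, $Y_1, Y_2 \in \cat$, $\xi \in \Hom_\cat(Y_1 \otimes X_1, Y_2 \otimes X_2)$, and $\eta \in \Hom_\cat(Y_2 \otimes X_2, Y_1 \otimes X_1)$. My strategy is to encode $\xi$ and $\eta$ into a single morphism $f \in \End_\cat(X_2 \otimes X_1^*)$ to which the left ambidextrous identity~\eqref{lambi} directly applies. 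Using the adjunction isomorphisms that rotate the $Y_i$ legs, define $\tilde\eta \in \Hom_\cat(X_2 \otimes X_1^*, Y_2^* \otimes Y_1)$ and $\tilde\xi \in \Hom_\cat(Y_2^* \otimes Y_1, X_2 \otimes X_1^*)$ by closing the $Y_2$ strand of $\eta$ with $\tcoev_{Y_2}$ and the $X_1$ strand with $\tev_{X_1}$, and symmetrically for $\xi$, and set $f = \tilde\xi \tilde\eta$. A diagrammatic computation, analogous to those in the proof of Theorem~\ref{thm-inv-graph}, then shows
\begin{equation*}
\tr_r^{X_1^*}(f) = \tr_l^{Y_2}(\xi\eta) \quad \text{and} \quad \phi_{X_1}^{-1} \bigl(\tr_l^{X_2}(f)\bigr)^* \phi_{X_1} = \tr_l^{Y_1}(\eta\xi),
\end{equation*}
so the left ambidextrous identity applied to $f$ is exactly $(*)$.

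\emph{Deducing well-definedness and the axioms.} Given $(*)$, the remainder is formal. For well-definedness, let $(X_i, Y_i, p_i, q_i)$, $i = 1, 2$, be two retract presentations of $U$ with $X_i \in \oo$, and set $f_i = q_i h p_i$, $a = q_1 p_2$, $b = q_2 p_1$; using $p_i q_i = \Id_U$ one checks $f_2 = b f_1 a$ and $a b f_1 = f_1$, so $(*)$ with $\xi = b f_1$ and $\eta = a$ yields $t_{X_1}(\tr_l^{Y_1}(f_1)) = t_{X_2}(\tr_l^{Y_2}(f_2))$. For the first left-trace axiom, if $U$ is a retract of $Y_0 \otimes X$ via $(p_0, q_0)$ with $X \in \oo$, then $Y \otimes U$ is a retract of $Y \otimes Y_0 \otimes X$ via $\Id_Y \otimes p_0$ and $\Id_Y \otimes q_0$, and naturality of $\tr_l^Y$ together with $\tr_l^{Y \otimes Y_0} = \tr_l^{Y_0} \tr_l^Y$ reduces $\tzz_{Y \otimes U}(f)$ to $\tzz_U(\tr_l^Y(f))$. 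For the cyclicity axiom $\tzz_V(gh) = \tzz_U(hg)$, choose retract presentations of $U$ and $V$, and apply $(*)$ to $\xi = q_V g p_U$ and $\eta = q_U h p_V$: since $\xi\eta = q_V(gh)p_V$ and $\eta\xi = q_U(hg)p_U$, the conclusion is immediate.
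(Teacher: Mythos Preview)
Your proof is correct and follows essentially the same approach as the paper: define $\tzz_U(h)=t_X(\tr_l^Y(qhp))$, then package the data into an endomorphism $f\in\End_\cat(X_2\otimes X_1^*)$ and apply the left ambidextrous identity~\eqref{lambi} to get independence of the presentation and cyclicity. The one organizational difference is that you isolate the key computation as a single lemma $(*)$ covering arbitrary $\xi,\eta$, from which both well-definedness and cyclicity drop out as special cases, whereas the paper carries out those two verifications separately (with two separate diagrams for~$f$); your factoring is a mild streamlining but not a different method.
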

We prove Theorem~\ref{thm:sided-traces} in Section~\ref{proof-thm-sided-trace}.

\begin{corollary}\label{C:sided-traceIambi}
  Let $\cat$ be a pivotal \kt category,  $\ideal$ be a left (resp.\@ right) ideal in $\cat$, and
 $t=\{t_X\colon \End_\cat(X)\rightarrow \kk\}_{X\in\ideal}$ be a family of
  \kt linear maps. Then $t$ is a
  left (resp.\@ right) trace on $\ideal$ if and only if $t$ is a left (resp.\@
  right) ambidextrous trace on~$\ideal$.
\end{corollary}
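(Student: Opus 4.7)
The corollary is essentially a direct consequence of two results already available in the paper, so the plan is short. The statement is an "if and only if", and the two directions should be handled separately.

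For the forward direction (trace implies ambidextrous), I would simply invoke Lemma~\ref{lem-trace-on-ideal-are-ambi}(a), which says that every left trace $\tzz$ on a left ideal $\ideal$ satisfies identity \eqref{lambi} for all $X,X' \in \ideal$ and $f \in \End_\cat(X' \otimes X^*)$. But this is precisely the defining relation of a left ambidextrous trace on the class $\oo = \ideal$ itself, so $t=\tzz$ is a left ambidextrous trace on $\ideal$. The right version is obtained identically using Lemma~\ref{lem-trace-on-ideal-are-ambi}(b) (or by passing to $\cat^\rev$).

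For the reverse direction, the key observation is that $\ideal$ is already a left ideal, hence the smallest left ideal of $\cat$ containing $\ideal$ is $\ideal$ itself: $\ideal^l_\ideal = \ideal$. Thus, assuming $t$ is a left ambidextrous trace on the class $\oo = \ideal$, Theorem~\ref{thm:sided-traces} produces a (unique) left trace $\tzz$ on $\ideal^l_\oo = \ideal$ with $\tzz_{|\ideal} = t$. Since $\tzz$ and $t$ are both defined on all of $\ideal$ and agree there, we have $t = \tzz$, so $t$ itself is a left trace on $\ideal$. The right version follows by the same argument with $\ideal^r_\ideal = \ideal$.

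There is no real obstacle here: both directions are immediate applications of previously established results. The only thing to emphasize in the write-up is the equality $\ideal^l_\ideal = \ideal$ (resp.\ $\ideal^r_\ideal = \ideal$), which holds because $\ideal$ is closed under retraction and left (resp.\ right) tensor multiplication by definition of a left (resp.\ right) ideal. Once this is noted, the corollary is a one-line consequence of Lemma~\ref{lem-trace-on-ideal-are-ambi} and Theorem~\ref{thm:sided-traces}.
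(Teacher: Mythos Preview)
Your proposal is correct and matches the paper's own proof essentially verbatim: the paper also derives the corollary as a direct consequence of Theorem~\ref{thm:sided-traces} together with $\ideal^l_\ideal=\ideal$ (resp.\ $\ideal^r_\ideal=\ideal$), with the forward direction already noted just before that theorem via Lemma~\ref{lem-trace-on-ideal-are-ambi}. There is nothing to add.
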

\begin{proof}
This is a direct consequence of Theorem~\ref{thm:sided-traces} and the fact that if $\ideal$ is a left ideal (resp.\@ right ideal), then $\ideal^l_\ideal=\ideal$ (resp.\@ $\ideal^r_\ideal=\ideal$).
\end{proof}

\subsection{Spherical traces  on a class of objects} Let $\cat$ be a  pivotal \kt category and  $t=\{t_X\colon \End_\cat(X)\rightarrow \kk\}_{X\in\ideal}$ be a family of
  \kt linear maps.

\begin{lemma}\label{lem-ambi1}
The following assertions are equivalent:
  \begin{enumerate}
\renewcommand{\labelenumi}{{\rm (\roman{enumi})}}
\item The family $t$ satisfies \eqref{ambi1} for all $X,X' \in \oo$, $Y \in \cat$, and $f \in \End_\cat(X' \otimes Y \otimes X^*)$, that is, 
\begin{equation*}
    t_X\left (\, \psfrag{Y}[Bc][Bc]{\scalebox{.8}{$Y$}} \psfrag{W}[Bc][Bc]{\scalebox{.8}{$X'$}} \psfrag{X}[Bc][Bc]{\scalebox{.8}{$X$}} \psfrag{f}[Bc][Bc]{\scalebox{.9}{$f$}} \rsdraw{.45}{.9}{lambi1b}  \, \right ) = t_{X'} \left (\, \psfrag{W}[Bc][Bc]{\scalebox{.8}{$X'$}} \psfrag{Y}[Bc][Bc]{\scalebox{.8}{$Y$}} \psfrag{X}[Bc][Bc]{\scalebox{.8}{$X$}} \psfrag{f}[Bc][Bc]{\scalebox{.9}{$f$}} \rsdraw{.45}{.9}{lambi2b}  \right ).
\end{equation*}
\item The family $t$ satisfies  \eqref{ambi2}  for all $X,X' \in \oo$, $Y \in \cat$, and $g\in \End_\cat(X^*\otimes Y \otimes X')$, that is,
\begin{equation*}
    t_X\left (\, \psfrag{Y}[Bc][Bc]{\scalebox{.8}{$Y$}} \psfrag{W}[Bc][Bc]{\scalebox{.8}{$X'$}} \psfrag{X}[Bc][Bc]{\scalebox{.8}{$X$}} \psfrag{f}[Bc][Bc]{\scalebox{.9}{$g$}} \rsdraw{.45}{.9}{rambi1b}  \, \right ) = t_{X'} \left (\, \psfrag{W}[Bc][Bc]{\scalebox{.8}{$X'$}} \psfrag{Y}[Bc][Bc]{\scalebox{.8}{$Y$}} \psfrag{X}[Bc][Bc]{\scalebox{.8}{$X$}} \psfrag{f}[Bc][Bc]{\scalebox{.9}{$g$}} \rsdraw{.45}{.9}{rambi2b}  \right ).
\end{equation*}
\end{enumerate}
\end{lemma}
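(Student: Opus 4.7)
I would prove the implication (i) $\Rightarrow$ (ii) directly; the reverse implication then follows by applying the same argument to $\cat^{\rev}$, in which left and right partial traces are interchanged and condition (i) becomes precisely condition (ii). To prove (i) $\Rightarrow$ (ii), fix $X,X' \in \oo$, $Y \in \cat$, and $g \in \End_\cat(X^*\otimes Y\otimes X')$. The core strategy is to produce an auxiliary object $Y'\in \cat$ and a morphism $f \in \End_\cat(X'\otimes Y'\otimes X^*)$---assembled from $g$ together with the duality morphisms of $X$ and $X'$ and the pivotal isomorphisms $\phi_X, \phi_{X'}$---such that the two partial traces of $f$ appearing in hypothesis (i) coincide with the two partial traces of $g$ appearing in (ii). Concretely, I would require
\[
\tr_l^{X'\otimes Y'}(f) = \tr_r^{Y\otimes X'}(g) \ \text{in}\ \End_\cat(X^*) \quad \text{and} \quad \tr_r^{Y'\otimes X^*}(f) = \tr_l^{X^*\otimes Y}(g) \ \text{in}\ \End_\cat(X').
\]
Once such an $f$ is constructed, plugging it into hypothesis (i) yields exactly identity (ii) for $g$.

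To build $f$ I would follow the spirit of the proof of Lemma~\ref{lem-trace-on-ideal-are-ambi}(c), where morphisms $\alpha$ and $\beta$ are assembled from the given endomorphism and the duality data. Here, $f$ will be obtained by ``bending'' the leftmost $X^*$-strand of $g$ around to the right and, symmetrically, the rightmost $X'$-strand around to the left, inserting appropriate evaluations, coevaluations, and copies of $\phi_X^{\pm 1}$ and $\phi_{X'}^{\pm 1}$ to account for the orientation changes created by moving strands across the exterior of the diagram. The auxiliary object $Y'$ will then be $Y$ augmented by the extra tensor factors produced by these bendings.

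The main obstacle will be the bookkeeping of the pivotal corrections. Since $\cat$ is only pivotal (not spherical), every bending of a strand around the exterior of the diagram introduces a factor of $\phi$ or $\phi^{-1}$, and the delicate point is to arrange the construction so that these corrections conspire to produce the two partial-trace identities above \emph{exactly}, with no residual twists. Once the correct assembly of $f$ is identified, the verification of the partial-trace formulas is a routine diagrammatic calculation of the same flavor as that in the proof of Lemma~\ref{lem-trace-on-ideal-are-ambi}(c), using the standard properties of $\ev$, $\coev$, $\tev$, $\tcoev$ together with naturality of the pivotal structure.
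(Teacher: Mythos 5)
Your logical skeleton is correct: if you can exhibit some $Y'\in\cat$ and $f\in\End_\cat(X'\otimes Y'\otimes X^*)$ with $\tr_l^{X'\otimes Y'}(f)=\tr_r^{Y\otimes X'}(g)$ and $\tr_r^{Y'\otimes X^*}(f)=\tr_l^{X^*\otimes Y}(g)$, then \eqref{ambi1} applied to this $f$ is exactly \eqref{ambi2} for $g$, and the reverse implication via $\cat^{\rev}$ is also fine. The gap is that you never actually construct $f$, and this construction is the entire content of the lemma. The ``bending'' you describe is precisely the move that a merely pivotal category forbids: pushing the outer $X^*$-strand around the exterior of the diagram reverses its orientation, so the bent strand reads $X^{**}$ at the boundary (and after inserting $\phi_X^{-1}$ it reads $X$, not $X^*$); likewise the bent $X'$-strand lands you with $X'^*$. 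There is no way to post-compose with the duality or pivotal isomorphisms alone so that the boundary still reads $X'\otimes Y'\otimes X^*$ with $X,X'\in\oo$; a single-strand bend simply produces an endomorphism of the wrong object. Waving at ``pivotal corrections conspiring'' does not close this, and it is not a routine verification: an $f$ with both required partial-trace identities does exist, but it is a substantially more elaborate composite than a rotated $g$ and its auxiliary object $Y'$ is not $Y$ (unwinding the paper's argument one finds $Y'$ of the shape $X'^{*}\otimes Y^{*}\otimes X^{**}$, with $f$ assembled from the morphisms $\alpha',\beta'$ of Lemma~\ref{lem-trace-on-ideal-are-ambi}(c) and the cyclicity $f$ of Theorem~\ref{thm:traces}'s proof).

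The paper sidesteps the need to produce this $f$ explicitly. It takes Condition~(i) as the definition of a spherical trace and proves Theorem~\ref{thm:traces} directly: $t$ extends uniquely to a two-sided trace $\tzz$ on $\ideal_\oo$. Then Lemma~\ref{lem-trace-on-ideal-are-ambi}(c), which holds for any trace on an ideal, gives that $\tzz$ satisfies \eqref{ambi2} on $\ideal_\oo$, and restricting to $\oo$ yields Condition~(ii). Your direct route would in effect inline both of those proofs into one diagrammatic computation; until you exhibit $f$ and verify both partial-trace identities, the central step is missing.
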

Lemma~\ref{lem-ambi1} is proved in Section~\ref{proof-thm-trace}.

We say that the family $t$ is a \emph{spherical trace} on $\oo$ if it satisfies the equivalent assertions of Lemma~~\ref{lem-ambi1}.
When $\cat$ is spherical (see Section~\ref{sec-spherical}), the
trace of endomorphisms in $\cat$ is a spherical trace on $\Ob(\cat)$.

By Lemma~\ref{lem-trace-on-ideal-are-ambi}, any trace on an ideal
$\ideal$ in $\cat$ is an ambidextrous trace on $\ideal$ (and in particular on any $\oo \subset \ideal$).
The following theorem states that spherical traces on a class of objects bijectively correspond to traces on the ideal generated by the class.
\begin{theorem}\label{thm:traces}
If $t$ is a spherical trace on a class $\oo$ of objects of~$\cat$,
then there exists a unique trace $\tzz$ on $\ideal_\oo$ such that
$\tzz_{|\oo}=t$.
\end{theorem}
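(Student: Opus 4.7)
The plan is to apply Theorem~\ref{thm:sided-traces} twice, first to extend $t$ on the left and then to extend that left extension on the right, with an intermediate step that upgrades one-sided ambidextrousness to full sphericity. Setting $Y=\unit$ in Lemma~\ref{lem-ambi1}(i) reduces that condition to~\eqref{lambi}, so a spherical trace on $\oo$ is in particular a left ambidextrous trace on $\oo$. By Theorem~\ref{thm:sided-traces}, $t$ therefore extends uniquely to a left trace $\tzz^l$ on the left ideal $\ideal_\oo^l$.

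The hard part, and the main obstacle, is the intermediate upgrade: I would show that $\tzz^l$ is not merely left ambidextrous on $\ideal_\oo^l$ (which is automatic from Lemma~\ref{lem-trace-on-ideal-are-ambi}(a)) but in fact \emph{spherical} on $\ideal_\oo^l$. Given $X,X'\in\ideal_\oo^l$, choose retract presentations $X=\pi(W\otimes A)$ and $X'=\pi'(W'\otimes A')$ with $A,A'\in\oo$ and the corresponding sections $\iota,\iota'$. Cyclicity and the left-trace property of $\tzz^l$ yield
\begin{equation*}
\tzz^l_X(h)=t_A\bigl(\tr_l^W(\iota h\pi)\bigr) \quad \text{for } h\in\End_\cat(X),
\end{equation*}
and similarly for $X'$. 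Substituting these expressions into the two sides of~\eqref{ambi1} for an arbitrary $f\in\End_\cat(X'\otimes Y\otimes X^*)$, and manipulating using the naturality of $\phi$, the identification $(W\otimes A)^*\simeq A^*\otimes W^*$, and the interaction of the partial traces with $\pi,\iota,\pi',\iota'$, both sides reduce to the two sides of an instance of~\eqref{ambi1} for the triple $(A,A',W'^*\otimes Y\otimes W)$, which holds by the assumed sphericity of $t$ on $\oo$. The diagrammatic bookkeeping here is delicate because the retract maps do not interact cleanly with the pivotal structure, and essentially one is redoing the calculation of Lemma~\ref{lem-trace-on-ideal-are-ambi}(c) for objects presented only as retracts.

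Once $\tzz^l$ is spherical on $\ideal_\oo^l$, Lemma~\ref{lem-ambi1}(ii) makes it right ambidextrous there. Since $\ideal_\oo=\ideal^r_{\ideal_\oo^l}$ (both contain $\oo$ and are the smallest two-sided ideal of $\cat$ containing $\oo$, or equivalently any retract of $Y\otimes X\otimes Z$ with $X\in\oo$ is a retract of $V\otimes Z$ for $V=Y\otimes X\in\ideal_\oo^l$), a second application of Theorem~\ref{thm:sided-traces} (right version, starting from the class $\ideal_\oo^l$) extends $\tzz^l$ uniquely to a right trace $\tzz$ on $\ideal_\oo$. To see that $\tzz$ is also a left trace (hence a trace) on $\ideal_\oo$, by Corollary~\ref{C:sided-traceIambi} it suffices to verify~\eqref{lambi} on $\ideal_\oo$; this follows by presenting objects of $\ideal_\oo$ as retracts of $V\otimes Z$ with $V\in\ideal_\oo^l$ and $Z\in\cat$, using the right-trace property of $\tzz$ to absorb $Z$, and reducing to the sphericity of $\tzz^l$ on $\ideal_\oo^l$ established above. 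For uniqueness, any trace $\tilde\tzz$ on $\ideal_\oo$ extending $t$ restricts to a left trace on $\ideal_\oo^l$ extending $t$, hence coincides with $\tzz^l$ there by the first use of Theorem~\ref{thm:sided-traces}; then $\tilde\tzz$ and $\tzz$ are two right traces on $\ideal_\oo$ extending $\tzz^l$, so they agree by the second.
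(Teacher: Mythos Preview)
Your strategy---extend $t$ to a left trace on $\ideal_\oo^l$ via Theorem~\ref{thm:sided-traces}, upgrade this extension to a spherical trace on $\ideal_\oo^l$, then extend again on the right to reach $\ideal_\oo=\ideal^r_{\ideal_\oo^l}$---is genuinely different from the paper's route. The paper does not iterate one-sided extensions at all: it defines the trace on $\ideal_\oo$ in a single stroke by
\[
\tzz_U(\alpha)=t_X\bigl(\tr_l^Y\tr_r^Z(q\alpha p)\bigr)
\]
for any retract presentation $p\colon Y\otimes X\otimes Z\to U$, $q\colon U\to Y\otimes X\otimes Z$ with $pq=\id_U$ and $X\in\oo$, and then checks well-definedness, the left and right partial-trace properties, and cyclicity directly, each time producing a single $f\in\End_\cat(X'\otimes(\text{stuff})\otimes X^*)$ to which~\eqref{ambi1} applies.

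There is a genuine circularity in your outline, however. In the paper's logical order Lemma~\ref{lem-ambi1} is proved \emph{after} and \emph{by means of} Theorem~\ref{thm:traces}: one takes condition~(i) as the working definition of ``spherical'', constructs the trace on $\ideal_\oo$ as above, invokes Lemma~\ref{lem-trace-on-ideal-are-ambi}(c) to see that this trace satisfies~\eqref{ambi2}, and restricts back to $\oo$ to obtain~(ii). Hence your appeal to Lemma~\ref{lem-ambi1} to pass from ``$\tzz^l$ satisfies~\eqref{ambi1} on $\ideal_\oo^l$'' to ``$\tzz^l$ is right ambidextrous on $\ideal_\oo^l$'' presupposes the very theorem you are proving. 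Nor can you bypass the lemma by repeating your retract reduction: the natural reduction of~\eqref{rambi} (or~\eqref{ambi2}) on $\ideal_\oo^l$ lands on an instance of~\eqref{ambi2} for $t$ on $\oo$, not~\eqref{ambi1}, and converting one into the other is precisely the content of the lemma. The same obstruction reappears in your final step, where verifying~\eqref{lambi} for $\tzz$ on $\ideal_\oo$ after absorbing the right factor $Z$ again requires sphericity in the form~\eqref{ambi2} on $\ideal_\oo^l$. Even were this resolved, each of your two ``hard'' retract computations is of the same flavor and comparable length to the paper's single well-definedness check, so the two-step route does not shorten the argument.
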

We prove Theorem~\ref{thm:traces} in Section~\ref{proof-thm-trace}.

\begin{corollary}
  Let $\ideal$ be an ideal in $\cat$ and
 $t=\{t_X\colon \End_\cat(X)\rightarrow \kk\}_{X\in\ideal}$ be a family of
  \kt linear maps. Then $t$ is a
  trace on $\ideal$ if and only if $t$ is a spherical trace on $\ideal$.
\end{corollary}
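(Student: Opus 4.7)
The plan is to deduce this corollary directly from Theorem~\ref{thm:traces} in exactly the same manner that Corollary~\ref{C:sided-traceIambi} was deduced from Theorem~\ref{thm:sided-traces}, so no new computation is required.

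For the forward direction, suppose $t$ is a trace on the ideal $\ideal$. Then Lemma~\ref{lem-trace-on-ideal-are-ambi}(c) asserts that $t$ satisfies both \eqref{ambi1} and \eqref{ambi2} for all $X,X'\in\ideal$, $Y\in\cat$, and the appropriate endomorphisms. In particular $t$ satisfies assertion (i) (equivalently (ii)) of Lemma~\ref{lem-ambi1}, so $t$ is a spherical trace on $\oo=\ideal$.

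For the converse, suppose $t$ is a spherical trace on $\oo=\ideal$. By Theorem~\ref{thm:traces}, there is a unique trace $\tzz$ on the ideal $\ideal_\ideal$ (the ideal of $\cat$ generated by $\ideal$) such that $\tzz_{|\ideal}=t$. The only point to observe is that $\ideal_\ideal=\ideal$: indeed, since $\ideal$ is already an ideal of $\cat$, it is closed under retraction and under left and right tensor multiplication, so any retract of $Y\otimes X\otimes Z$ with $X\in\ideal$ and $Y,Z\in\cat$ lies in $\ideal$; this gives $\ideal_\ideal\subseteq \ideal$, and the reverse inclusion is clear from the definition. Hence $\tzz$ is defined on $\ideal$ and coincides with $t$, so $t$ itself is a trace on $\ideal$.

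There is no real obstacle here, as the work has been done in Theorem~\ref{thm:traces} and Lemma~\ref{lem-trace-on-ideal-are-ambi}(c); the only verification needed is the trivial identity $\ideal_\ideal=\ideal$, parallel to the identities $\ideal^l_\ideal=\ideal$ and $\ideal^r_\ideal=\ideal$ used in the proof of Corollary~\ref{C:sided-traceIambi}.
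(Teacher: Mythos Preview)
Your proof is correct and follows essentially the same approach as the paper: the paper's one-line proof invokes Theorem~\ref{thm:traces} together with the equality $\ideal_\ideal=\ideal$, the forward direction having already been observed (via Lemma~\ref{lem-trace-on-ideal-are-ambi}(c)) just before the statement of Theorem~\ref{thm:traces}. Your write-up simply makes explicit the two ingredients and the trivial verification of $\ideal_\ideal=\ideal$.
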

\begin{proof}
This is a direct consequence of Theorem~\ref{thm:traces} and the fact that if $\ideal$ is an ideal, then  $\ideal_\ideal=\ideal$.
\end{proof}

A spherical trace on $\oo$ is in particular  a left and a right ambidextrous trace on
$\oo$. But the converse is not true in general (for example, the left trace of endomorphisms in $\cat$ is both a left and right ambidextrous trace on $\oo=\{\unit\}$ but is not a spherical trace on $\{\unit\}$ unless $\cat$ is spherical). Nevertheless the converse is true when $\cat$ is ribbon:

\begin{corollary}
Assume $\cat$ is ribbon. Let $\oo$ be a class of objects of $\cat$. If $t$ is both a left and right ambidextrous trace on $\oo$, then $t$ is a spherical trace on $\oo$.
\end{corollary}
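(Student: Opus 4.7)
The plan is to produce a (two-sided) trace on $\ideal_\oo$ that extends $t$; once this is done, Lemma~\ref{lem-trace-on-ideal-are-ambi}(c) immediately gives that the restriction $t$ satisfies the spherical-trace identity \eqref{ambi1} on $\oo$, concluding the proof. Since $\cat$ is ribbon and therefore braided, every left (resp.\@ right) ideal of $\cat$ is two-sided; in particular $\ideal^l_\oo=\ideal_\oo$. Applying Theorem~\ref{thm:sided-traces} to the left ambidextrous trace $t$ then produces a unique left trace $\tzz$ on $\ideal_\oo$ restricting to $t$, and the remaining work is to show that this $\tzz$ is also a right trace on $\ideal_\oo$.

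The cyclicity clause for a right trace coincides with that of a left trace, so only the right multiplicativity $\tzz_{X\otimes Z}(f)=\tzz_X(\tr_r^Z(f))$ requires checking, for $X\in\ideal_\oo$, $Z\in\cat$, and $f\in\End_\cat(X\otimes Z)$. Denoting by $c$ the braiding of $\cat$, I would chain together the identities
$$\tzz_{X\otimes Z}(f)=\tzz_{Z\otimes X}(c_{X,Z}\,f\,c_{X,Z}^{-1})=\tzz_X\bigl(\tr_l^Z(c_{X,Z}\,f\,c_{X,Z}^{-1})\bigr)=\tzz_X(\tr_r^Z(f)).$$
The first equality is cyclicity of $\tzz$ applied with the pair $g=c_{X,Z}^{-1}$ and $h=c_{X,Z}f$; the second is the left multiplicativity of $\tzz$ (noting $Z\otimes X\in\ideal_\oo$ by repleteness of ideals); the third is a diagrammatic identity valid in any braided pivotal category, obtained by sliding the $Z$-loop off the $X$-strand.

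Having established that $\tzz$ is a two-sided trace on $\ideal_\oo$, Lemma~\ref{lem-trace-on-ideal-are-ambi}(c) asserts that $\tzz$ satisfies \eqref{ambi1} for all $X,X'\in\ideal_\oo$ and $Y\in\cat$; restricting to $X,X'\in\oo$ then gives the spherical identity for $t$. The main obstacle is the last equality above: one must verify carefully that the two braiding crossings produced by $c_{X,Z}$ and $c_{X,Z}^{-1}$ are of opposite handedness, so that they cancel topologically and the $Z$-loop transfers from the left to the right of $f$ without picking up any twist. I note that this argument uses only the left ambidextrous hypothesis on $t$; the right ambidextrous hypothesis is in fact not strictly needed in the ribbon setting, though one may symmetrically apply Theorem~\ref{thm:sided-traces} to the right ambi trace and obtain the same conclusion by an entirely analogous route.
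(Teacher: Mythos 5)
Your proof is correct in its essentials and follows the same overall strategy as the paper---extend $t$ to a two-sided trace $\tzz$ on $\ideal_\oo$ via Theorem~\ref{thm:sided-traces}, using the ribbon structure to transfer partial traces between the two sides, then restrict back to $\oo$ via Lemma~\ref{lem-trace-on-ideal-are-ambi}(c)---but you choose a different detailed route in the middle. The paper starts from the \emph{right} ambidextrous hypothesis, extends $t$ to a right trace $\tzz$ on $\ideal_\oo$, and then verifies the left ambidextrous identity \eqref{lambi} for $\tzz$ by a chain of ribbon moves that feeds the conjugate $g=\tau_{X',X^*}f\tau_{X',X^*}^{-1}$ into \eqref{rambi}; Corollary~\ref{C:sided-traceIambi} then upgrades this to a left trace. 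You instead start from the \emph{left} ambidextrous hypothesis, extend $t$ to a left trace $\tzz$, and verify the defining right-multiplicativity relation $\tzz_{X\otimes Z}(f)=\tzz_X(\tr_r^Z(f))$ directly using cyclicity, left multiplicativity, and a single slide move. This is a somewhat shorter verification since it bypasses the ambidextrous characterization in Corollary~\ref{C:sided-traceIambi} and works straight from the axioms; the paper's route, by contrast, makes the role of the ambidextrous identities more transparent. Both proofs correctly observe that only one of the two hypotheses is actually used in the ribbon setting (the paper uses the right one, you the left).

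One imprecision: you assert that $\tr_l^Z(c_{X,Z}\,f\,c_{X,Z}^{-1})=\tr_r^Z(f)$ is ``a diagrammatic identity valid in any braided pivotal category.'' This is too strong. Taking $X=\unit$ and $f=\Id_Z$, the identity reduces to $\dim_l(Z)=\dim_r(Z)$, which fails in a general braided pivotal category; it requires at least sphericality, and the clean justification you actually want (sliding the $Z$-loop off the $X$-strand, cancelling the two inverse crossings) is a $3$-dimensional ribbon isotopy, hence needs the ribbon structure. Since the corollary assumes $\cat$ ribbon, this does not affect the correctness of your argument, but the stated generality of the lemma you invoke should be scaled back to ribbon (or, if you want to isolate the minimal hypothesis, spherical plus the appropriate compatibility between the pivotal and braided structures).
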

\begin{proof}
Since $\cat$ is braided, $\ideal_\oo^l=\ideal_\oo^r=\ideal_\oo$.
By Theorem~\ref{thm:sided-traces}(a), $t$ extends (uniquely) to a right trace $\tzz$ on $\ideal_\oo$. Let $X,X' \in \ideal_\oo$ and $f \in \End_\cat(X' \otimes X^*)$. We have:
$$
\psfrag{Y}[Bc][Bc]{\scalebox{.8}{$X'$}} \psfrag{X}[Bc][Bc]{\scalebox{.8}{$X$}} \psfrag{f}[Bc][Bc]{\scalebox{.9}{$f$}}
\tzz\!\left(\,\rsdraw{.45}{.9}{lambi1} \right)
= \tzz\!\left(\rsdraw{.45}{.9}{lambi1bt}\, \right)
= \tzz\!\left(\rsdraw{.45}{.9}{lambi2bt} \; \right)
= \tzz\!\left(\;\rsdraw{.45}{.9}{lambi2} \right).
$$
The first and third equalities follow from the fact that $\cat$ is ribbon. Since $\tzz$ is a right ambidextrous trace on $\ideal_\oo$ by Theorem~\ref{thm:sided-traces}(b), then the second equality follows from \eqref{rambi} with $g=\tau_{X',X^*}f\tau_{X',X^*}^{-1}$, where $\tau$ is the braiding of $\cat$. Thus $\tzz$ is a left ambidextrous trace on $\ideal_\oo$, and so a left trace on $\ideal_\oo$ by Corollary~\ref{C:sided-traceIambi}. Hence $\tzz$ is a trace on $\ideal_\oo$, and so $t=\tzz_{|\oo}$ is a spherical trace on $\oo$ by Theorem\ref{thm:traces}(b).
\end{proof}

\subsection{Traces from ambidextrous objects}\label{sect-ambidextrous-object}
Let $\cat$ be a monoidal \kt category. If $V$ is a simple object of $\cat$, we
denote by $\brk{\,}_V \colon \End_\cat(V) \to \kk$ the inverse of the \kt
linear isomorphism $\kk \to \End_\cat(V)$ defined by $k \mapsto k\, \Id_V$.

We say that an object $V$ of $\cat$ is \emph{left ambidextrous} (resp.\@
\emph{right ambidextrous}, resp.\@ \emph{spherical}) if it is simple and
the \kt linear form $\brk{\,}_V$ is a left ambidextrous trace (resp.\@ right
ambidextrous trace, resp.\@ spherical trace) on $\{V\}$. By Theorems~\ref{thm:sided-traces} and \ref{thm:traces}, such an object gives rise to a left trace (resp.\@ right trace, resp.\@ trace) on $\ideal_V^l$ (resp.\@ $\ideal_V^r$, resp.\@ $\ideal_V$).

The following proposition provides useful characterizations of ambidextrous objects:
\begin{proposition} Let $\cat$ be a pivotal \kt category, with pivotal structure
$\phi$, and let $V$ be a simple object of $\cat$.
\begin{enumerate}
\renewcommand{\labelenumi}{{\rm (\alph{enumi})}}
\item The following assertions are equivalents:
\begin{enumerate}
\renewcommand{\labelenumii}{{\rm (\roman{enumii})}}
\item $V$ is left ambidextrous;
\item For all $f \in \End_\cat(V^* \otimes V)$, $\ev_V f=\tev_{V^*}f^*(\id_{V^*} \otimes \phi_V)$;
\item For all $f \in \End_\cat(V^* \otimes V)$, $f \tcoev_V =(\id_{V^*} \otimes \phi_V^{-1})f^* \coev_{V^*}$.
  \end{enumerate}
\item The following assertions are equivalents:
\begin{enumerate}
\renewcommand{\labelenumii}{{\rm (\roman{enumii})}}
\item $V$ is right ambidextrous;
\item For all $f \in \End_\cat(V \otimes V^*)$, $\tev_V f=\ev_{V^*}f^*(\phi_V \otimes \id_{V^*})$;
\item For all $f \in \End_\cat(V \otimes V^*)$, $f \coev_V=(\phi_V^{-1}\otimes
    \id_{V^*}) f^*\tcoev_{V^*}$.
  \end{enumerate}
 \item $V$ is spherical if and only if
   $$\tr_r^Y\!\bigl((\ev_V \otimes \id_Y) f \bigr)= \tr_r^{Y^*}\!\bigl((\id_{Y^*}
   \otimes \tev_{V^*})f^* \bigr)(\id_{V^*} \otimes \phi_V)$$ for all $Y \in
   \cat$ and $f \in \Hom_\cat(Y \otimes V^* \otimes V, V^* \otimes V \otimes
   Y)$.
\end{enumerate}
\end{proposition}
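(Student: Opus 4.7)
The plan is to treat part (a) carefully, deduce (b) from (a) via $\cat^{\rev}$, and handle (c) by enlarging the argument of (a) with one auxiliary object. Throughout I will exploit simplicity of $V$: the \kt linear isomorphism $\brk{\cdot}_V\colon\End_\cat(V)\to\kk$ lets me trade any scalar equality $\brk{\alpha}_V=\brk{\beta}_V$ for the morphism equality $\alpha=\beta$ in $\End_\cat(V)$, and similarly $V^*$ is simple, so an equality of morphisms $V^*\otimes V\to\unit$ (or $\unit\to V^*\otimes V$) is detected by a single scalar.

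For (a) (i)$\Leftrightarrow$(ii): The definition of left ambidextrous for $\oo=\{V\}$ is the $X=X'=V$ case of \eqref{lambi},
$$\brk{\phi_V^{-1}(\tr_l^V\!(f))^*\phi_V}_V=\brk{\tr_r^{V^*}\!(f)}_V\quad\text{for all }f\in\End_\cat(V\otimes V^*).$$
By simplicity this is equivalent to the equality $\phi_V^{-1}(\tr_l^V\!(f))^*\phi_V=\tr_r^{V^*}\!(f)$ in $\End_\cat(V)$. To match (ii) I would compose both sides suitably with $\ev_V$-- and $\coev_V$--type morphisms to recast the equation as one in $\Hom_\cat(V^*\otimes V,\unit)$. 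Graphically, $\tr_r^{V^*}\!(f)$ is $f$ with its right $V^*$--leg closed on the right, while $\phi_V^{-1}(\tr_l^V\!(f))^*\phi_V$ is $f$ with its $V$--leg closed on the left and then transposed through $\phi_V$. Sliding strands using the pivotal axioms converts this into $\ev_V f'=\tev_{V^*}(f')^*(\id_{V^*}\otimes\phi_V)$ for a suitable $f'\in\End_\cat(V^*\otimes V)$ canonically associated to $f$ by capping with $\ev_V$, $\coev_V$. Conversely, every $f'\in\End_\cat(V^*\otimes V)$ arises this way from some $f$ (and, by simplicity, the truth of (i) on an arbitrary $f$ is captured by the test performed on $f'$); hence (i)$\Leftrightarrow$(ii).

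For (a) (ii)$\Leftrightarrow$(iii): I would apply $(-)^*$ to the equation in (ii), using $(gh)^*=h^*g^*$ and the standard identities expressing $\ev_V^*$ and $\tev_{V^*}^*$ in terms of $\coev_{V^*}$, $\tcoev_V$, and $\phi_V$, together with $(\id_{V^*}\otimes\phi_V)^*=\id_{V^{**}}\otimes\phi_V^{-1}$ under the canonical iso $(V^*\otimes V)^*\simeq V^*\otimes V^{**}$. After absorbing the pivotal factors through $\phi_V$, the dual of (ii) for $f$ becomes (iii) for $f^*$. Since $(-)^*$ is a \kt linear bijection between the relevant endomorphism spaces (inverted up to $\phi_V$), the universally quantified forms of (ii) and (iii) are equivalent. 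Part (b) then follows from (a) applied to $\cat^{\rev}$, which swaps left/right partial traces and left/right dualities, converting the left ambidextrous condition in $\cat^{\rev}$ into the right ambidextrous condition in $\cat$.

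For (c), Lemma~\ref{lem-ambi1} with $X=X'=V$ says $V$ is spherical iff \eqref{ambi1} holds for every $Y\in\cat$ and every $f\in\End_\cat(V\otimes Y\otimes V^*)$. Simplicity again reduces this to an equality of endomorphisms of $V$, parametrized by $Y$. I would run the same graphical manipulation as in (a) (i)$\Leftrightarrow$(ii), but now with an extra $Y$--strand threading through: the two sides of \eqref{ambi1}, namely $\tr_r^{Y\otimes V^*}\!(f)$ and $\phi_V^{-1}(\tr_l^{V\otimes Y}\!(f))^*\phi_V$, after the appropriate capping by $\ev_V$ and $\tev_{V^*}$, become precisely the left and right sides of the displayed identity in (c). The chief obstacle throughout is the graphical-calculus bookkeeping required to track the canonical isomorphisms $(A\otimes B)^*\simeq B^*\otimes A^*$ and the action of $\phi$ on them; once these identifications are set up carefully, each equivalence is a straightforward diagram chase.
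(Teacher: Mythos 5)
Your plan matches the paper's proof in all essentials: both use simplicity of $V$ to upgrade the scalar equalities defining the ambidextrous/spherical conditions to morphism equalities in $\End_\cat(V)$, both pass through a canonical isomorphism between the relevant endomorphism spaces (which the paper makes explicit as the map $I_Y\colon\Hom_\cat(Y\otimes V^*\otimes V, V^*\otimes V\otimes Y)\to\End_\cat(V\otimes Y\otimes V^*)$, with $Y=\unit$ handling parts (a) and (b) and general $Y$ handling (c)), both reduce (c) to Lemma~\ref{lem-ambi1} on $\oo=\{V\}$, and both transfer (a) to (b) via $\cat^{\rev}$. The one minor divergence is at (a)(ii)$\Leftrightarrow$(iii), where you propose to dualize the identity directly and track the pivotal coherences, whereas the paper simply notes that (iii) is (ii) stated in $\cat^{\opp}$ and that left ambidexterity is unchanged under passing to the opposite category — a slicker observation that sidesteps exactly the coherence bookkeeping you flag as the main obstacle.
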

\begin{proof}
  For $Y \in \cat$, let $I_Y\colon\Hom_\cat(Y \otimes V^* \otimes V, V^*
  \otimes V \otimes Y) \to \End_\cat(V \otimes Y \otimes V^*)$ be the \kt
  linear isomorphism defined by
  $$
  I_Y(f)=\, \psfrag{V}[Bc][Bc]{\scalebox{.8}{$V$}}
  \psfrag{Y}[Bc][Bc]{\scalebox{.8}{$Y$}}\psfrag{f}[Bc][Bc]{\scalebox{.9}{$f$}}
  \rsdraw{.45}{.9}{dem-ambi5}.
  $$

  Let us prove Part (a). The form $\brk{\,}_V$ is a left ambidextrous trace on
  $\{V\}$ if and only if \eqref{lambi} applied to $I_\unit(f)$ is satisfied
  for all $f \in \End_\cat(V^* \otimes V)$. Since $\alpha=\brk{\alpha}_V
  \id_V$ for all $\alpha \in \End_\cat(V)$, the condition \eqref{lambi}
  applied to $I_\unit(f)$ is equivalent to $\ev_V f=\tev_{V^*}f^*(\id_{V^*}
  \otimes \phi_V)$. Therefore, (i) is equivalent to (ii). Consequently (i) is
  equivalent to (iii), since (iii) is (ii) applied to the opposite category
  $\cat^\opp$ and an object is left ambidextrous in $\cat$ if and only if it
  is left ambidextrous in $\cat^\opp$.

  Part (b) is deduced from Part (a) by using $\cat^\rev$.  Let us prove Part (c). The form $\brk{\,}_V$ is a spherical trace on $\{V\}$
  if and only if \eqref{ambi1} applied to $I_Y(f)$ is satisfied for all $Y \in
  \cat$ and $f \in \Hom_\cat(Y \otimes V^* \otimes V, V^* \otimes V \otimes
  Y)$, which turns out to be equivalent to $\tr_r^Y\!\bigl((\ev_V \otimes
  \id_Y) f \bigr)=\tr_r^{Y^*}\!\bigl((\id_{Y^*} \otimes \tev_{V^*})f^*
  \bigr)(\id_{V^*} \otimes \phi_V) $ since $V$ is simple.
\end{proof}

\subsection{Proof of Theorem~\ref{thm:sided-traces}}\label{proof-thm-sided-trace}
We prove the theorem for left traces.  Then the statements for right
traces can be deduced using $\cat^\rev$. Let $t$ be a left ambidextrous trace on a class $\oo \subset \cat$. For $U \in \ideal^l_\oo$ and $\alpha\in\End_\cat(U)$, set
$$
\tzz_U(\alpha)=t_X\bigl(\tr_l^Y(q\alpha p)\bigr)
$$
where $X \in \oo$, $Y \in \cat$, $p\colon Y \otimes X \to U$, and $q \colon U \to Y \otimes X$ are such that $pq=\id_U$. We first verify that $\tzz_U(\alpha)$ does not depend on the choice of $p,q$. Let $p'\colon Y' \otimes X' \to U$ and $q' \colon U \to Y' \otimes X'$, with $X' \in \oo$, $Y' \in \cat$, such that $pq=\id_U$. Set
\begin{equation*}
f= \psfrag{Y}[Bc][Bc]{\scalebox{.8}{$Y$}}
   \psfrag{X}[Bc][Bc]{\scalebox{.8}{$X'$}}
   \psfrag{U}[Bc][Bc]{\scalebox{.8}{$U$}}
   \psfrag{A}[Bc][Bc]{\scalebox{.8}{$X$}}
   \psfrag{B}[Bc][Bc]{\scalebox{.8}{$Y'$}}
   \psfrag{l}[Bc][Bc]{\scalebox{.9}{$\alpha$}}
   \psfrag{r}[Bc][Bc]{\scalebox{.9}{$q'$}}
   \psfrag{s}[Bc][Bc]{\scalebox{.9}{$p$}}
   \psfrag{a}[Bc][Bc]{\scalebox{.9}{$q$}}
   \psfrag{b}[Bc][Bc]{\scalebox{.9}{$p'$}}
   \rsdraw{.45}{.9}{dem-ambi3}\colon X' \otimes X^* \to X' \otimes X^* ,
\end{equation*}
so that $\tr_r^{X^*}(f)= \tr_l^{Y'}(q'\alpha p')$ and $\phi_X^{-1} (\tr_l^{X'}(f))^* \phi_X=\tr^{Y}_l(q\alpha p)$. Therefore
$$
t_{X'}\bigl(\tr_l^{Y'}(q'\alpha p')\bigr)=t_{X'}\bigl(\tr_r^{X^*}(f)\bigr)=t_{X}\bigl(\phi_X^{-1} (\tr_l^{X'}(f))^* \phi_X\bigr)
=t_{X}\bigl(\tr^{Y}_l(q\alpha p)\bigr)
$$
and so $\tzz_U$ is well-defined. Clearly, $\tzz_{|\oo}=t$. Let us show that $\tzz$ is a left trace on $\ideal^l_\oo$. Let $Z \in \cat$, $U \in \ideal^l_\oo$, and $\alpha \in \End_\cat(Z \otimes U)$. Take $p\colon Y \otimes X \to U$ and $q \colon U \to Y \otimes X$, with $X \in \oo$ and $Y \in \cat$, such that $pq=\id_U$. Then $(\Id_Z \otimes p)(\Id_Z \otimes q)=\Id_{Z \otimes U}$. Therefore
$$
\tzz_{Z \otimes U}(\alpha)=t_X\bigl(\tr_l^{Z \otimes Y}((\Id_Z \otimes q)\alpha (\Id_Z \otimes p))\bigr)=
t_X\bigl(\tr_l^{Y}\bigl(q\tr_l^{Z}(\alpha)p\bigr)\bigr)=\tzz_U\bigl(\tr_l^{Z}(\alpha)\bigr).
$$
Let now $U,V \in \ideal_\oo^l$, $\alpha \in \Hom_\cat(U,V)$, and $\beta \in \Hom_\cat(V,U)$. Take $p\colon Y \otimes X \to U$, $q \colon U \to Y \otimes X$, $p'\colon Y' \otimes X' \to V$, $q' \colon V \to Y' \otimes X'$, with $X,X' \in \oo$ and $Y,Y' \in \cat$, such that $pq=\id_U$ and $p'q'=\id_V$. Set
\begin{equation*}
f= \psfrag{Y}[Bc][Bc]{\scalebox{.8}{$Y$}}
   \psfrag{X}[Bc][Bc]{\scalebox{.8}{$X'$}}
   \psfrag{U}[Bc][Bc]{\scalebox{.8}{$U$}}
   \psfrag{V}[Bc][Bc]{\scalebox{.8}{$V$}}
   \psfrag{A}[Bc][Bc]{\scalebox{.8}{$X$}}
   \psfrag{B}[Bc][Bc]{\scalebox{.8}{$Y'$}}
   \psfrag{g}[Bc][Bc]{\scalebox{.9}{$\alpha$}}
   \psfrag{h}[Bc][Bc]{\scalebox{.9}{$\beta$}}
   \psfrag{r}[Bc][Bc]{\scalebox{.9}{$q'$}}
   \psfrag{s}[Bc][Bc]{\scalebox{.9}{$p$}}
   \psfrag{a}[Bc][Bc]{\scalebox{.9}{$q$}}
   \psfrag{b}[Bc][Bc]{\scalebox{.9}{$p'$}}
   \rsdraw{.45}{.9}{dem-ambi4}\colon X' \otimes X^* \to X' \otimes X^* ,
\end{equation*}
so that $\tr_r^{X^*}(f)= \tr_l^{Y'}(q'\alpha \beta p')$ and $\phi_X^{-1} (\tr_l^{X'}(f))^* \phi_X=\tr^{Y}_l(q\beta\alpha p)$.
Therefore
\begin{align*}
\tzz_V(\alpha\beta)&=t_{X'}\bigl(\tr_l^{Y'}(q'\alpha \beta p')\bigr)=t_{X'}\bigl(\tr_r^{X^*}(f)\bigr)\\
&=
t_X\bigl(\phi_X^{-1} (\tr_l^{X'}(f))^* \phi_X\bigr)=t_X\bigl(\tr^{Y}_l(q\beta\alpha p)\bigr)=\tzz_U\bigl(\beta\alpha).
\end{align*}
Hence, $\tzz$ is a left trace on $\ideal^l_\oo$. Suppose finally $\ell$ is another left trace on $\ideal^l_\oo$ with $\ell_{|\oo}=t$. Let $U \in \ideal^l_\oo$ and $\alpha\in\End_\cat(U)$. Take $p\colon Y \otimes X \to U$ and $q \colon U \to Y \otimes X$, with $X \in \oo$, $Y \in \cat$, such that $pq=\id_U$. Note that $U$ and $Y \otimes X$ belong to $\ideal^l_\oo$. Then
$$
\ell_U(\alpha)=\ell_U(\alpha pq)=\ell_{Y \otimes X}(q \alpha p)=\ell_X\bigl(\tr_l^Y(q\alpha p)\bigr)=t_X\bigl(\tr_l^Y(q\alpha p)\bigr)=\tzz_U(\alpha).
$$
Hence, $\ell=\tzz$.

\subsection{Proofs of Lemma~\ref{lem-ambi1} and Theorem~\ref{thm:traces}}\label{proof-thm-trace}
Let us prove Theorem~\ref{thm:traces} by taking Condition (i) of Lemma~\ref{lem-ambi1} as the
definition of a spherical trace.  Let $t$ be a spherical trace on a class
$\oo \subset \cat$. For $U \in \ideal_\oo$ and $\alpha\in\End_\cat(U)$, set
$$
\tzz_U(\alpha)=t_X\bigl(\tr_l^Y\!\tr_r^Z(q\alpha p)\bigr)
$$
where $X \in \oo$, $Y,Z \in \cat$, $p\colon Y \otimes X \otimes Z \to U$, and $q \colon U \to Y \otimes X \otimes Z$ are such that $pq=\id_U$. We first verify that $\tzz_U(\alpha)$ does not depend on the choice of $p,q$. Let $p'\colon Y' \otimes X' \otimes Z'\to U$ and $q' \colon U \to Y' \otimes X' \otimes Z'$, with $X' \in \oo$ and $Y',Z' \in \cat$, such that $pq=\id_U$. Set
\begin{equation*}
f= \psfrag{Y}[Bc][Bc]{\scalebox{.8}{$Y$}}
   \psfrag{X}[Bc][Bc]{\scalebox{.8}{$X$}}
   \psfrag{Z}[Bc][Bc]{\scalebox{.8}{$Z$}}
   \psfrag{U}[Bc][Bc]{\scalebox{.8}{$U$}}
   \psfrag{A}[Bc][Bc]{\scalebox{.8}{$X'$}}
   \psfrag{B}[Bc][Bc]{\scalebox{.8}{$Y'$}}
   \psfrag{C}[Bc][Bc]{\scalebox{.8}{$Z'$}}
   \psfrag{l}[Bc][Bc]{\scalebox{.9}{$\alpha$}}
   \psfrag{r}[Bc][Bc]{\scalebox{.9}{$q'$}}
   \psfrag{s}[Bc][Bc]{\scalebox{.9}{$p$}}
   \psfrag{a}[Bc][Bc]{\scalebox{.9}{$q$}}
   \psfrag{b}[Bc][Bc]{\scalebox{.9}{$p'$}}
   \rsdraw{.45}{.9}{dem-ambi6}\colon X' \otimes Z' \otimes Z^* \otimes X^* \to X' \otimes Z' \otimes Z^* \otimes X^* ,
\end{equation*}
so that $$\tr_r^{Z' \otimes Z^* \otimes X^*}\!(f)= \tr_l^{Y'}\!\tr_r^{Z'}\!(q'\alpha p') \quad \text{and} \quad \phi_X^{-1} \bigl(\tr_l^{X' \otimes Z' \otimes Z^*}(f)\bigr)^* \phi_X=\tr^{Y}_l\!\tr_r^Z(q\alpha p).$$ Therefore
\begin{align*}
t_{X'}\bigl(\tr_l^{Y'}\!\tr_r^{Z'}\!&(q'\alpha p')\bigr)=t_{X'}\bigl(\tr_r^{Z' \otimes Z^* \otimes X^*}\!(f)\bigr)\\
&=t_{X}\bigl(\phi_X^{-1} \bigl(\tr_l^{X' \otimes Z' \otimes Z^*}(f)\bigr)^* \phi_X\bigr)
=t_{X}\bigl(\tr^{Y}_l\!\tr_r^Z(q\alpha p)\bigr)
\end{align*}
and so $\tzz_U$ is well-defined. Clearly, $\tzz_{|\oo}=t$. Let us show that $\tzz$ is a trace on $\ideal_\oo$. Let $A \in \cat$, $U \in \ideal_\oo$, $\alpha \in \End_\cat(A \otimes U)$ and $\beta \in \End_\cat(U \otimes A)$. Take $p\colon Y \otimes X \otimes Z\to U$ and $q \colon U \to Y \otimes X \otimes Z$, with $X \in \oo$ and $Y,Z \in \cat$, such that $pq=\id_U$. Then $(\Id_A \otimes p)(\Id_A \otimes q)=\Id_{A \otimes U}$ and $(p \otimes \Id_A)(q \otimes \Id_A)=\Id_{U\otimes A}$. Therefore
\begin{align*}
\tzz_{A \otimes U}(\alpha)&=t_X\bigl(\tr_l^{A \otimes Y}\tr_r^Z((\Id_A \otimes q)\alpha (\Id_A \otimes p))\bigr)\\ &=
t_X\bigl(\tr_l^{Y}\tr_r^Z\bigl(q\tr_l^{A}(\alpha)p\bigr)\bigr)=\tzz_U\bigl(\tr_l^{A}(\alpha)\bigr)
\end{align*}
and
\begin{align*}
\tzz_{U \otimes A}(\beta)&=t_X\bigl(\tr_l^{Y}\tr_r^{Z \otimes A}((q \otimes \Id_A)\beta (p \otimes \Id_A))\bigr)\\ &=
t_X\bigl(\tr_l^{Y}\tr_r^Z\bigl(q\tr_r^{A}(\beta)p\bigr)\bigr)=\tzz_U\bigl(\tr_r^{A}(\beta)\bigr).
\end{align*}
Let now $U,V \in \ideal_\oo$, $\alpha \in \Hom_\cat(U,V)$, and $\beta \in \Hom_\cat(V,U)$. Take $p\colon Y \otimes X \otimes Z\to U$, $q \colon U \to Y \otimes X\otimes Z$, $p'\colon Y' \otimes X' \otimes Z'\to V$, $q' \colon V \to Y' \otimes X'\otimes Z'$, with $X,X' \in \oo$ and $Y,Y',Z,Z' \in \cat$, such that $pq=\id_U$ and $p'q'=\id_V$. Set
\begin{equation*}
f= \psfrag{Y}[Bc][Bc]{\scalebox{.8}{$Y$}}
   \psfrag{X}[Bc][Bc]{\scalebox{.8}{$X$}}
   \psfrag{Z}[Bc][Bc]{\scalebox{.8}{$Z$}}
   \psfrag{U}[Bc][Bc]{\scalebox{.8}{$U$}}
   \psfrag{V}[Bc][Bc]{\scalebox{.8}{$V$}}
   \psfrag{A}[Bc][Bc]{\scalebox{.8}{$X'$}}
   \psfrag{B}[Bc][Bc]{\scalebox{.8}{$Y'$}}
   \psfrag{C}[Bc][Bc]{\scalebox{.8}{$Z'$}}
   \psfrag{l}[Bc][Bc]{\scalebox{.9}{$\alpha$}}
    \psfrag{t}[Bc][Bc]{\scalebox{.9}{$\beta$}}
   \psfrag{r}[Bc][Bc]{\scalebox{.9}{$q'$}}
   \psfrag{s}[Bc][Bc]{\scalebox{.9}{$p$}}
   \psfrag{a}[Bc][Bc]{\scalebox{.9}{$q$}}
   \psfrag{b}[Bc][Bc]{\scalebox{.9}{$p'$}}
   \rsdraw{.45}{.9}{dem-ambi7}\colon X' \otimes Z' \otimes Z^* \otimes X^* \to X' \otimes Z' \otimes Z^* \otimes X^* ,
\end{equation*}
so that
\begin{align*}
&\tr_r^{Z' \otimes Z^* \otimes X^*}\!(f)= \tr_l^{Y'}\!\tr_r^{Z'}\!(q'\alpha \beta  p'),\\
 &\phi_X^{-1} \bigl(\tr_l^{X' \otimes Z' \otimes Z^*}(f)\bigr)^* \phi_X=\tr^{Y}_l\!\tr_r^Z(q\beta \alpha p).
\end{align*}
Therefore
\begin{align*}
\tzz_V(\alpha\beta)&=t_{X'}\bigl(\tr_l^{Y'}\!\tr_r^{Z'}\!(q'\alpha \beta  p')\bigr)=t_{X'}\bigl(\tr_r^{Z' \otimes Z^* \otimes X^*}\!(f)\bigr)\\
&=t_{X}\bigl(\phi_X^{-1} \bigl(\tr_l^{X' \otimes Z' \otimes Z^*}(f)\bigr)^* \phi_X\bigr)
=t_{X}\bigl(\tr^{Y}_l\!\tr_r^Z(q\beta \alpha p)\bigr)=\tzz_U(\beta\alpha).
\end{align*}
Hence, $\tzz$ is a trace on $\ideal_\oo$. Suppose finally $\ell$ is another trace on $\ideal_\oo$ with $\ell_{|\oo}=t$. Let $U \in \ideal_\oo$ and $\alpha\in\End_\cat(U)$. Take $p\colon Y \otimes X \otimes Z\to U$ and $q \colon U \to Y \otimes X \otimes Z$, with $X \in \oo$ and $Y,Z \in \cat$, such that $pq=\id_U$. Note that $U$ and $Y \otimes X$ belong to~$\ideal_\oo$. Then
\begin{align*}
\ell_U(\alpha)&=\ell_U(\alpha pq)=\ell_{Y \otimes X \otimes Z}(q \alpha p)=\ell_{Y \otimes X}\bigl(\tr_r^Z(q\alpha p)\bigr)\\
&=\ell_{X}\bigl(\tr_l^Y\tr_r^Z(q\alpha p)\bigr)=t_X\bigl(\tr_l^Y\tr_r^Z(q\alpha p)\bigr)=\tzz_U(\alpha).
\end{align*}
Hence, $\ell=\tzz$. 

Finally, let us prove Lemma~\ref{lem-ambi1}. Assume the family $t$ satisfies Condition (i) of Lemma~\ref{lem-ambi1}. By the above proof of Theorem~\ref{thm:traces}, there exists a trace $\tzz$ on $\ideal_\oo$ such that $\tzz_{|\oo}=t$ and $\tzz$ satisfies \eqref{ambi2} for all $X,X' \in \ideal_\oo$ and $Y \in \cat$. Since $\oo \subset \ideal_\oo$ and $\tzz_{|\oo}=t$, we have that $t$ satisfies \eqref{ambi2} for all $X,X' \in \oo$ and $Y \in \cat$. Hence, Condition (i) implies Condition (ii). Applying this implication to $\cat^\rev$ gives the reverse implication, since the ideal generated by $\oo$ in $\cat^\rev$ coincide with the ideal generated by $\oo$ in $\cat$. This concludes the proof of Lemma~\ref{lem-ambi1}.

\section{The case of projectives and the slope} \label{S:Slope}
The invariant of Theorem~\ref{thm-inv-graph2} relies on a certain set $\A$ of objects defined from a one-sided trace on a one-sided ideal.
In this section we give a characterization of $\A$, in terms of the slope, when the one-sided ideal is the ideal of projective objects.

Let $\cat$ be a category. Recall that an object $P$ of $\cat$ is \emph{projective} if the functor $\Hom_\cat(P,-)\colon \cat \to \mathrm{Set}$ preserves epimorphisms, that is, if for any epimorphism $p\colon X \to Y$ and any morphism $f\colon P \to Y$ in $\cat$, there exists a morphism $g \colon P \to X$ in~$\cat$ such that $f=pg$. We denote by $\Proj(\cat)$ the class of projective objects of $\cat$.

An object $Q$ of $\cat$ is \emph{injective} if it is projective in the opposite category $\cat^\mathrm{op}$. In other words, an object $Q$ of $\cat$ is injective if for any monomorphism $i\colon X \to Y$ and any morphism $f\colon X \to Q$ in $\cat$, there exists a morphism $g \colon Y \to Q$ in $\cat$ such that $f=gi$.
\begin{lemma}\label{L:projIdeal}
Let $\cat$ be pivotal category.
\begin{enumerate}
\renewcommand{\labelenumi}{{\rm (\alph{enumi})}}
\item $\Proj(\cat)$ is an ideal of $\cat$. In particular, $\Proj(\cat)^*=\Proj(\cat)$.
\item $\Proj(\cat)$ is the set of injective objects of $\cat$.
\item If $\ideal$ is a left (resp.\@ right) ideal of $\cat$ containing an object $V$ such that the left evaluation $\ev_V\colon V^* \otimes V \to \unit$ (resp.\@ the right evaluation $\tev_V\colon V \otimes V^* \to \unit$) is an epimorphism, then $\Proj(\cat) \subset \ideal$.
\item If $P$ is a projective object such that  $\ev_P$ (resp.\@ $\tev_P$) is an epimorphism, then $\ideal^l_{P}=\ideal_{P}=\Proj(\cat)$ (resp.\@ $\ideal^r_{P}=\ideal_{P}=\Proj(\cat)$).
\end{enumerate}
\end{lemma}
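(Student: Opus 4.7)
The argument rests on two structural features of pivotal categories that I will use throughout. First, for any fixed $Y\in\cat$, each of the functors $Y\otimes-$ and $-\otimes Y$ admits a right adjoint, namely $Y^*\otimes-$ and $-\otimes Y^*$ respectively (from the right and left dualities), and hence preserves epimorphisms. Second, via the pivotal isomorphism $\phi$, the contravariant duality $(-)^*$ is an equivalence between $\cat$ and $\cat^{\opp}$, so it interchanges monomorphisms and epimorphisms.

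For part~(a), closure of $\Proj(\cat)$ under retraction is the usual diagram chase: if $pq=\Id_U$ with $P$ projective, then a lifting problem $f\colon U\to B$ against an epi $e\colon A\to B$ is solved by lifting $fp\colon P\to B$ and composing with $q$. For closure under left multiplication by $Y\in\cat$, given an epi $e\colon A\to B$ and $f\colon Y\otimes P\to B$, the map $\Id_{Y^*}\otimes e$ is still epi, so projectivity of $P$ applied to the transpose $\hat f\colon P\to Y^*\otimes B$ of $f$ under the adjunction $Y\otimes-\dashv Y^*\otimes-$ produces a lift which transposes back to the required lift of $f$. The right case is symmetric, so $\Proj(\cat)$ is an ideal, and $\Proj(\cat)^*=\Proj(\cat)$ is a direct instance of Lemma~\ref{lem-dual-ideal}(b).

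For part~(b), I will show projective implies injective; the converse is symmetric (since injectives of $\cat$ are projectives of $\cat^{\opp}$, and the above argument applies in $\cat^{\opp}$). Given a monomorphism $i\colon X\to Y$ and $f\colon X\to P$ with $P$ projective, the dual $i^*\colon Y^*\to X^*$ is epi and $P^*$ is projective by part~(a); hence $f^*$ lifts along $i^*$ to some $h\colon P^*\to Y^*$, and using $f^{**}=\phi_P f\phi_X^{-1}$ and $i^{**}=\phi_Y i\phi_X^{-1}$ one checks that $g=\phi_P^{-1}h^*\phi_Y\colon Y\to P$ satisfies $gi=f$. For part~(c), in the left-ideal case, since $P\otimes-$ preserves epimorphisms, $\Id_P\otimes\ev_V\colon P\otimes V^*\otimes V\to P$ is epi; projectivity of $P$ produces a section, exhibiting $P$ as a retract of $P\otimes V^*\otimes V$, which lies in $\ideal$ by closure under left multiplication ($V\in\ideal$, hence $V^*\otimes V\in\ideal$, hence $P\otimes V^*\otimes V\in\ideal$). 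Closure under retraction then gives $P\in\ideal$. The right-ideal case uses $\tev_V$ and $-\otimes P$ in place of $\ev_V$ and $P\otimes-$, and is otherwise identical.

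For part~(d), by part~(a), $\Proj(\cat)$ is an ideal containing $P$, so by minimality $\ideal^l_P\subseteq\ideal_P\subseteq\Proj(\cat)$; conversely, part~(c) applied to the left ideal $\ideal^l_P$ (which contains~$P$, with $\ev_P$ epi by hypothesis) gives $\Proj(\cat)\subseteq\ideal^l_P$, forcing equality throughout. The version with $\tev_P$ epi follows symmetrically. The main---essentially only---non-routine point in the whole proof is the observation that tensoring with a fixed object preserves epimorphisms in a pivotal category; once this has been isolated from the duality adjunctions, the remainder reduces to standard diagram-chasing together with Lemma~\ref{lem-dual-ideal} and the explicit description of the generated ideals.
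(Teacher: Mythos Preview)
Your proof is correct and follows essentially the same route as the paper's: the key ingredient in both is that tensoring with a fixed object preserves epimorphisms, which you justify explicitly via the duality adjunctions while the paper uses it without comment, and the remaining steps (retraction closure, the duality equivalence for part~(b), the section argument for part~(c), and the chain of inclusions for part~(d)) match the paper's proof almost verbatim. The only organizational difference is that the paper proves closure under retraction and left multiplication in a single step (showing directly that a retract of $X\otimes P$ is projective), whereas you treat them separately; this is cosmetic.
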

\begin{proof}
Let us prove Part (a). Let $P\in \Proj(\cat)$, $X \in \cat$, $U$ be a retract of $X \otimes P$, and
$u\colon X \otimes P \to U$, $v \colon U \to X \otimes P$ such that $uv=\id_U$. 
Let $p \colon M \to N$ be an epimorphism and $f \colon U \to N$ be a morphism. Set $$f'=(\id_{X^*} \otimes fu)(\tcoev_X \otimes \id_P) \colon P \to X^* \otimes N.$$
Since $\id_{X^*} \otimes p$ is an epimorphism, there exists a morphism $g'\colon P \to X^* \otimes M$ such that $f'=(\id_{X^*} \otimes p)g'$. Set
$
g=(\tev_X \otimes \id_M)(\id_X \otimes g')v \colon U \to M
$.
Then
\begin{align*}
pg & = (\tev_X \otimes \id_N)(\id_X \otimes (\id_{X^*} \otimes p) g')v \\
&=(\tev_X \otimes \id_N)(\id_X \otimes f')v \\
&= (\tev_X \otimes f)
(\id_{X \otimes X^*} \otimes u)
(\id_X \otimes \tcoev_X \otimes \id_P)v\\
&=fuv=f.
\end{align*}
Therefore, $U \in \Proj(\cat)$, and so $\Proj(\cat)$ is a left ideal. Likewise one shows that $\Proj(\cat)$ is a right ideal. Hence, $\Proj(\cat)$ is an ideal. 
Thus, Lemma \ref{L:id*} implies that $\Proj(\cat)^*=\Proj(\cat)$.

Part (b) follows from the fact that the duality functor $\cat^\opp \to \cat$
is an equivalence. Let us prove the left version of Part (c), from which the
right version can be deduced by using $\cat^\rev$. Let $P \in
\Proj(\cat)$. since $\id_P \otimes \ev_V$ is an epimorphism, there exists a
morphism $g \colon P \to P \otimes V^* \otimes V$ such that
$(\id_P\otimes\ev_V)g =\id_P$. Therefore, $P$ is a retract of $P \otimes V^*
\otimes V$. Since $P \otimes V^* \otimes V \in \ideal$ as $\ideal$ is a left
ideal, we get $P \in \ideal$. Hence, $\Proj(\cat) \subset \ideal$.

Finally, let us prove the left version of Part (d), from which the right version can be deduced by using $\cat^\rev$.
By Part (c), we have: $\Proj(\cat)\subset \ideal^l_P$. Now $\ideal^l_P\subset \ideal_{P}$ since $\ideal_{P}$ is a left ideal containing $P$ and $\ideal_{P}\subset \Proj(\cat)$ since $\Proj(\cat)$ is an ideal containing $P$. Therefore,
$\ideal^l_P=\ideal_{P}=\Proj(\cat)$.
\end{proof}

Now let $\cat$ be a pivotal \kt category and $P$ be a projective object such
that $\ev_P$ is an epimorphism.
Assume there exists a non zero left ambidextrous trace $t$ on $\{P\}$. For
example, such a trace exists when $P$ is left ambidextrous (see
Section~\ref{sect-ambidextrous-object}). By Lemma~\ref{L:projIdeal},
$\Proj(\cat)=\ideal^l_P$ and so, by Theorem~\ref{thm:sided-traces}, there
exists a left trace $\tzz$ on $\Proj(\cat)$ such that
$\tzz_P=t$.  Denote by $\qd$ the modified
left dimension associated with $\tzz$, see Section \ref{SS-sided-traces}.
\begin{lemma}\label{L:idealProj}
Let $\tzz$ and $\qd$ be the left trace and modified left dimension associated to $P$ and $t$ as defined in the previous paragraph.
Let $V$ be a projective object such that $\ev_V$ an epimorphism. Then $\tzz_V\neq0$.  Moreover,  if $V$ is simple, then  $\qd(V)\neq0$.
\end{lemma}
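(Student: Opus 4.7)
The plan is to turn the nonvanishing hypothesis $t_P \neq 0$ (which is what ``there exists a non zero left ambidextrous trace on $\{P\}$'' really gives) into a statement about $\tzz_V$ by realizing $P$ itself as a retract of $Y \otimes V$ for some auxiliary object $Y$, and then using the cyclicity and partial-trace axioms of a left trace on the two-sided ideal $\Proj(\cat)$ to transfer information from $V$ back to $P$.

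First I would apply Lemma~\ref{L:projIdeal}(d) with $V$ playing the role of $P$: the hypotheses on $V$ give $\ideal^l_V = \Proj(\cat)$. Since $P \in \Proj(\cat)$, there exist an object $Y \in \cat$ and morphisms $p\colon Y \otimes V \to P$ and $q\colon P \to Y \otimes V$ with $pq = \id_P$. Both $P$ and $Y \otimes V$ lie in the left ideal $\Proj(\cat)$ on which $\tzz$ is defined, so for every $\gamma \in \End_\cat(P)$ the axioms \eqref{defltrace} of a left trace yield
$$\tzz_P(\gamma) = \tzz_P\bigl(p \cdot (q\gamma)\bigr) = \tzz_{Y \otimes V}\bigl((q\gamma) \cdot p\bigr) = \tzz_V\bigl(\tr_l^Y(q\gamma p)\bigr).$$
If $\tzz_V$ were identically zero, this chain would force $\tzz_P(\gamma) = 0$ for every $\gamma$, so $t_P = \tzz_P|_{\{P\}} = 0$, contradicting the nontriviality of $t$. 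Hence $\tzz_V \neq 0$.

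For the \emph{moreover} part, assume $V$ is simple. By the definition of simplicity in a monoidal \kt category (Section~\ref{sphesphe}), $\End_\cat(V) = \kk\,\Id_V$, so $\tzz_V$ is completely determined by the single scalar $\tzz_V(\Id_V) = \qd(V)$: every $\beta \in \End_\cat(V)$ has the form $\beta = \brk{\beta}_V\,\id_V$ with $\tzz_V(\beta) = \brk{\beta}_V\,\qd(V)$. Nonvanishing of $\tzz_V$ therefore forces $\qd(V) \neq 0$.

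The only nontrivial step is the appeal to Lemma~\ref{L:projIdeal}(d) to secure the retraction $p,q$ of $P$ onto $Y \otimes V$; once that is in hand, the rest is a direct unwinding of the trace axioms. So the main obstacle is checking that the hypotheses on $V$ (projectivity together with $\ev_V$ epic) are strong enough to place $P$ inside $\ideal^l_V$, which is exactly what Lemma~\ref{L:projIdeal}(d) provides.
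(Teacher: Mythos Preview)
Your proof is correct. Both your argument and the paper's start from the same key observation: since $V$ is projective with $\ev_V$ epic, Lemma~\ref{L:projIdeal}(d) gives $\ideal^l_V=\Proj(\cat)$, so $P$ lies in the left ideal generated by $V$. From there the two diverge slightly. The paper invokes Lemma~\ref{P:ideal4}(a) to produce $f\in\End_\cat(P\otimes V^*)$ with $\tr_r^{V^*}(f)=\Id_P$, and then appeals to the left ambidextrous identity~\eqref{lambi} (Lemma~\ref{lem-trace-on-ideal-are-ambi}(a)) to exhibit an explicit $h\in\End_\cat(V)$ with $\tzz_V(h)=\tzz_P(g)\neq0$. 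You instead take the retraction $p,q$ with $pq=\Id_P$ directly from the definition of $\ideal^l_V$ and use only the cyclicity and partial-trace axioms~\eqref{defltrace} to get $\tzz_P(\gamma)=\tzz_V\bigl(\tr_l^Y(q\gamma p)\bigr)$. Your route is a bit more elementary, since it bypasses the derived identity~\eqref{lambi} entirely; the paper's route has the small advantage of naming the explicit endomorphism of $V$ that witnesses $\tzz_V\neq0$. The treatment of the simple case is identical in both.
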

\begin{proof}
Let $g \in \End_\cat(P)$ such that
$\tzz_P(g)=t(g) \neq 0$.  By Lemma~\ref{L:projIdeal}, $\ideal^l_V=\Proj(\cat)$. By Lemma~\ref{P:ideal4}, since $P \in \ideal^l_V$, there exists $f \in \End_\cat(P \otimes V^*)$ such that $\id_P=\tr_r^{V^*}\!(f)$. Set $h=\phi_V^{-1} \bigl(\tr_l^{P}\!((g \otimes \id_{V^*})f)\bigr)^* \phi_V \in \End_\cat(V)$.
By Theorem~\ref{thm:sided-traces}, $\tzz$ is a left ambidextrous trace on $\Proj(\cat)$. Therefore, Equation \eqref{lambi} implies
$$
\tzz_V(h)
=\tzz_{P}\bigl(\tr_r^{V^*}\!((g \otimes \id_{V^*})f)\bigr)
=\tzz_{P}\bigl(g \tr_r^{V^*}\!(f)\bigr)=\tzz_{P}(g) \neq 0.
$$
Hence, $\tzz_V\neq 0$.  Now if  $V$ is simple. Let $k \in \kk$ such that $h=k \id_V$. Then $k\,\qd(V)=k\,\tzz_V(\id_V)=\tzz_V(h)\neq 0$, and so $\qd(V) \neq 0$.
\end{proof}
Set
$$
\A=\{V \in \Proj(\cat) \,|\, \text{$V$ simple and $\ev_V$, $\tev_V$ are epimorphisms}\}.
$$
Note that $\A^*=\A$ since the dual of a simple object is simple, $\Proj(\cat)^*=\Proj(\cat)$, and for any $X\in\cat$, $\ev_{X^*}$ is an epimorphism if and only if $\tev_X$ is an epimorphism.

Assume now that $\kk$ is a field. For $V \in \A$, the \emph{slope} of $V$ is
$$
s(V)=\qd(V)/\qd(V^*) \in \kk^\times.
$$
The slope is well-defined since $\A^*=\A$ and $\qd(U) \neq 0$ for all $U \in \A$ (by Lemma~\ref{L:idealProj}). Recall $\tzz{\vee}$ is the right trace on $\Proj(\cat)^*=\Proj(\cat)$ defined by $\tzz_X^\vee(f)=\tzz_{X^*}(f^*)$ for $X \in \Proj(\cat)$ and $f \in \End_\cat(X)$.

\begin{proposition}\label{P:Slope}
  The slope $s\colon \A\to \kk^{\times}$ has the following properties:
  \begin{enumerate}
    \renewcommand{\labelenumi}{{\rm (\alph{enumi})}}
  \item $s(V^*)=s(V)^{-1}$ for all $V\in \A$.
  \item $s(U)=s(V)s(W)$ for all $U,V,W \in \A$ such that $U$ is a retract of $V\otimes W$.
  \item For any $V \in \A$, $\tzz{\vee}_V=\tzz_V$ if and only if $s(V)=1$. \label{I:sphiff}
  \end{enumerate}
\end{proposition}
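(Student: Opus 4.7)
My plan is to treat parts~(a) and~(c) quickly, then spend most of the effort on~(b). Part~(a) follows directly from Lemma~\ref{lem-dual-trace}(a): applied to $\id_V$ and using $\id_V^{**} = \id_{V^{**}}$, it gives $\qd(V^{**}) = \qd(V)$, whence $s(V^*) = \qd(V^*)/\qd(V^{**}) = \qd(V^*)/\qd(V) = s(V)^{-1}$. For part~(c), simplicity of $V$ forces every $f \in \End_\cat(V)$ to be $k\,\id_V$ for a unique $k \in \kk$, so $\tzz_V(f) = k\,\qd(V)$ and $\tzz^{\vee}_V(f) = \tzz_{V^*}(k\,\id_{V^*}) = k\,\qd(V^*)$; hence $\tzz^{\vee}_V = \tzz_V$ holds iff $\qd(V) = \qd(V^*)$, i.e.\ iff $s(V) = 1$ (the nonvanishing $\qd(V), \qd(V^*) \in \kk^{\times}$ coming from Lemma~\ref{L:idealProj}).

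For part~(b), fix $p \colon V \otimes W \to U$ and $q \colon U \to V \otimes W$ with $pq = \id_U$. Since $W$ is simple, $\tr_l^V(qp) = \lambda\,\id_W$ for a unique $\lambda \in \kk$, and applying the left-trace axioms (all objects in sight lie in $\Proj(\cat)$) yields
\[
\qd(U) = \tzz_U(pq) = \tzz_{V \otimes W}(qp) = \tzz_W(\tr_l^V(qp)) = \lambda\,\qd(W).
\]
Dually, $q^* \colon W^* \otimes V^* \to U^*$ and $p^* \colon U^* \to W^* \otimes V^*$ satisfy $q^* p^* = \id_{U^*}$, and simplicity of $V^*$ gives $\tr_l^{W^*}(p^* q^*) = \mu\,\id_{V^*}$ for a unique $\mu \in \kk$, hence $\qd(U^*) = \mu\,\qd(V^*)$ by the same calculation.

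To link $\lambda$ and $\mu$, I apply Lemma~\ref{lem-trace-on-ideal-are-ambi}(a) with $X = V$, $X' = W^*$, and $f = p^* q^* \in \End_\cat(W^* \otimes V^*)$ (both $V$ and $W^*$ lie in $\Proj(\cat) = \ideal$). Using the dual identity $(\tr_l^V(qp))^* = \tr_r^{V^*}(p^* q^*)$, the right-hand side of~\eqref{lambi} equals $\tzz_{W^*}(\lambda\,\id_{W^*}) = \lambda\,\qd(W^*)$, while the left-hand side reduces via $\phi_V^{-1}(\mu\,\id_{V^{**}})\phi_V = \mu\,\id_V$ to $\mu\,\qd(V)$. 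This gives the key relation $\mu\,\qd(V) = \lambda\,\qd(W^*)$. Since $U \in \A$ forces $\qd(U) \ne 0$, we have $\lambda \ne 0$, hence $\mu \ne 0$, and
\[
s(U) = \frac{\qd(U)}{\qd(U^*)} = \frac{\lambda\,\qd(W)}{\mu\,\qd(V^*)} = \frac{\qd(V)\,\qd(W)}{\qd(W^*)\,\qd(V^*)} = s(V)\,s(W).
\]
The main obstacle is purely bookkeeping: choosing the ingredients $X, X', f$ in Lemma~\ref{lem-trace-on-ideal-are-ambi}(a) so that its two partial traces reproduce exactly the scalars $\lambda$ and $\mu$ attached to the retraction. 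Once the correct order of factors (note the swap $V \otimes W \leftrightarrow W^* \otimes V^*$ under dualization) is tracked and $p^* q^* = (qp)^*$ is used, the rest of~(b) is algebra.
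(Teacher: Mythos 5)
Your proposal is correct throughout. Parts~(a) and~(c) follow the same reasoning as the paper (immediate from $\qd(V^{**})=\qd(V)$ and simplicity, respectively). Part~(b), however, takes a genuinely different route. The paper sets $a=(p\otimes\id_{W^*})(\id_V\otimes\coev_W)$ and $b=(\id_V\otimes\tev_W)(q\otimes\id_{W^*})$, observes $\tr_r^{W^*}(ab)=\id_U$ and $(ba)^*=\tr_l^{W^*}(p^*q^*)$, and then runs a long chain of equalities: one application of \eqref{lambi}, followed by two applications of the relation $\tzz_X(f)=s(X)\,\tzz_{X^*}(f^*)$ (Equation~\eqref{eq-dem}, which is part~(c) in disguise), peeling off $s(W)$ and then $s(V)$ one at a time. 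You instead exploit simplicity of $W$ and $V^*$ to package the two partial traces $\tr_l^V(qp)$ and $\tr_l^{W^*}(p^*q^*)$ as scalars $\lambda$ and $\mu$, derive $\qd(U)=\lambda\,\qd(W)$ and $\qd(U^*)=\mu\,\qd(V^*)$ directly from the left-trace axioms, and then invoke Lemma~\ref{lem-trace-on-ideal-are-ambi}(a) once (with $X=V$, $X'=W^*$, $f=p^*q^*$) together with the dual identity $\tr_r^{V^*}((qp)^*)=(\tr_l^V(qp))^*$ to obtain $\mu\,\qd(V)=\lambda\,\qd(W^*)$. This links the two scalars and yields $s(U)=s(V)s(W)$ by pure algebra, \emph{without} invoking part~(c) or \eqref{eq-dem}. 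Your version is arguably cleaner conceptually, since it isolates a single application of the ambidextrous identity as the only non-formal step; the paper's version trades that for avoiding the explicit scalars and letting the computation flow morphism by morphism. Both are valid.
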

\begin{proof}
  Part (a) is an immediate consequence of the definition since $\qd(V^{**})=\qd(V)$ because $V^{**} \simeq V$. Let us prove Part (c). Let  $f\in\End_\cat(V)$. There exists $\lambda \in \kk$ such that $f=\lambda\id_V$ and so $f^*=\lambda\id_{V^*}$. Then
\begin{equation}\label{eq-dem}
\tzz_V(f)=\lambda\qd(V)=\lambda\qd(V^{*})s(V)=s(V)\tzz_{V^{*}}(f^{*})=s(V)\tzz_V^\vee(f).
\end{equation}
Hence, $\tzz_V =s(V)\, \tzz_V^\vee$.

Let us prove Part (b).  Let $p\in \Hom_\cat(V\otimes W,U)$ and $q \in   \Hom_\cat(U,V\otimes W)$ such that $pq=\id_U$. Set
$$
a=(p \otimes \id_{W^*})(\id_V \otimes \coev_W)  \quad \text{and} \quad b=(\id_V \otimes \tev_W)(q \otimes \id_{W^*}) .
$$
Note that $\tr_r^{W^*}\!(ab)=pq=\id_U$ and $(ba)^*=\tr_l^{W^*}\!(p^*q^*)$. Since $\tzz$ is a left trace, and in particular satisfies \eqref{lambi} (see Theorem~\ref{thm:sided-traces}), and by using \eqref{eq-dem}, we have:
\begin{align*}
\qd(U)&=\tzz_U(\tr_r^{W^*}\!(ab))
=\tzz_W( \phi_W^{-1} \bigl(\tr_l^{U}\!(ab)\bigr)^*\phi_W) \\
&=s(W)\, \tzz_{W^*}( (\phi_W^{-1} \bigl(\tr_l^{U}\!(ab)\bigr)^*\phi_W)^*)
=s(W)\, \tzz_{W^*}(\tr_l^{U}\!(ab))\\
&=s(W)\, \tzz_{U \otimes W^*}(ab)
=s(W)\, \tzz_{V}(ba)\\
&=s(V)s(W)\, \tzz_{V^*}((ba)^*)
=s(V)s(W)\, \tzz_{V^*}(\tr_l^{W^*}\!(p^*q^*))\\
&=s(V)s(W)\, \tzz_{W^* \otimes V^*}(p^*q^*)
=s(V)s(W)\, \tzz_{U^*}(q^*p^*)\\
&=s(V)s(W)\, \tzz_{U^*}(\id_{U^*})
=s(V)s(W)\, \qd(U^*).
\end{align*}
Hence, $s(U)=s(V)s(W)$.
\end{proof}
Remark that when $\cat$ is ribbon, meaning that $\cat$ is endowed with a braiding $\tau$ such that its associated twist
$$
\theta=\{\theta_X=\tr_r^{X}(\tau_{X,X})\colon X \to X\}_{X \in \cat}
$$
is self-dual (i.e., $(\theta_X)^*=\theta_{X^*}$ for all $X \in \cat$), then $s(V)=1$ for all $V \in \A$ (see \cite{GPT1}), and so $\tzz\vee=\tzz$ on $\A$.

\frenchspacing

\vfill
\end{document}